\journal{Submitted to Journal of Multivariate Analysis}
\theoremstyle{plain}
\newtheorem{theorem}{Theorem}
\newtheorem{proposition}{Proposition}
\newtheorem{lemma}{Lemma}
\newtheorem{corollary}{Corollary}
\theoremstyle{definition}
\newtheorem*{example}{Example}
\newtheorem{remark}{Remark}
\newcommand{\N}{\mathbb{N}}
\newcommand{\R}{\mathbb{R}}
\newcommand{\EE}{\mathsf{E}} 
\newcommand{\Var}{\mathsf{Var}} 
\newcommand{\bb}[1]{\boldsymbol{#1}}
\newcommand{\rd}{\mathrm{d}}
\newcommand{\tr}{\mathrm{tr}}
\newcommand{\etr}{\mathrm{etr}}
\newcommand{\vecc}{\mathrm{vec}}
\newcommand*{\bigcdot}{\mathbin{\raisebox{-0.5ex}{\scalebox{1.7}{\hspace{-0.15mm}$\cdot$}}}}
\begin{document}

\begin{frontmatter}

\title{Stein's method for the matrix normal distribution}

\author[a1]{Robert E.\ Gaunt}
\author[a2]{Fr\'ed\'eric Ouimet\corref{mycorrespondingauthor}}
\author[a3]{Donald Richards}

\address[a1]{Department of Mathematics, The University of Manchester, Manchester, M13 9PL, United Kingdom}
\address[a2]{D\'epartement de math\'ematiques et d'informatique, Universit\'e du Qu\'ebec \`a Trois-Rivi\`eres, Trois-Rivi\`eres (Qu\'ebec), G8Z 4M3, Canada}
\address[a3]{Department of Mathematics and Statistics, Penn State University, University Park, PA 16802, USA\vspace{-5mm}}

\cortext[mycorrespondingauthor]{Corresponding author. Email address: frederic.ouimet2@uqtr.ca}

\begin{abstract}
This work presents the first systematic development of Stein's method for matrix distributions. We establish the basic essential ingredients of Stein's method for matrix normal approximation: we derive an extended-generator-based Stein identity from a matrix Ornstein--Uhlenbeck diffusion with two-sided scales, provide an explicit semigroup representation for the solution of the Stein equation, and obtain regularity estimates for the solution. The new methodology is demonstrated in three examples: (i) smooth Wasserstein distance bounds to quantify the matrix central limit theorem (a didactic example), (ii) a Wasserstein distance bound for the matrix normal approximation of the centered matrix $T$ distribution, and (iii) a Stein's method-of-moments approach to estimating the row and column covariance factors of the matrix normal, yielding a flexible class of weighted flip--flop Stein estimators that generalize Dutilleul's classical flip--flop algorithm and naturally accommodate row/column importance weights, systematic missingness, and projection onto structured covariance families. The latter two examples are intrinsically matrix-valued and cannot be treated using naive vectorization.
\end{abstract}

\begin{keyword}
Matrix normal distribution \sep matrix \texorpdfstring{$T$}{T} distribution \sep normal approximation \sep Ornstein--Uhlenbeck process \sep Stein's method
\MSC[2020]{Primary: 62E10; Secondary: 62E20, 62H10, 62H12, 60F05, 60H10, 60J60}
\end{keyword}

\end{frontmatter}

\vspace{-3mm}
\section{Introduction}\label{sec:intro}

Stein's method is a powerful probabilistic technique for deriving explicit, quantitative bounds on distances between probability distributions. Introduced for normal approximation in the seminal paper \citep{s72}, the starting point of the method is the following \emph{Stein characterization} of the standard normal law. A real-valued random variable $X$ is equal in law to the standard normal distribution if and only if $\EE[f'(X)-Xf(X)]=0$ for all absolutely continuous functions $f:\R\to \R$ such that $\EE[|f'(Z)|]<\infty$ for $Z\sim \mathcal{N}(0,1)$. This characterization motivates the \emph{Stein equation} $\mathcal{A}f(x)=h(x)-\EE[h(Z)]$ for a test function $h:\R\to \R$, where the \emph{Stein operator} is given by $\mathcal{A}f(x)=f'(x)-xf(x)$. Letting $f_h$ denote the solution of the Stein equation, we have the transfer principle $\EE[h(W)]-\EE[h(Z)]=\EE[\mathcal{A}f_h(W)]$, which allows one to convert the problem of bounding the distance (with respect to some probability metric) between the distribution of $W$ and the standard normal law to bounding the expectation $\EE[\mathcal{A}f_h(W)]$ (over all $h$ in some function class), a task which is tractable in many settings on account of the fact that this is an expectation involving a single random variable. Stein's method for normal approximation is now very well developed, and we refer the reader to the monographs of \citet{MR2732624} and \citet{np12} for a thorough treatment.

A desirable feature of Stein's method is that the above procedure applies to general target distributions by selecting a suitable Stein operator to characterize the distribution. Indeed, the method has been adapted to many of the most important univariate probability distributions, including the Poisson \citep{c75}, exponential \citep{cfr11,pr11}, chi-square \citep{gpr17}, variance-gamma \citep{g14} and stable laws \citep{ah19book,x19}.

Stein's method has also been extended to certain multivariate distributions, such as the multivariate normal \citep{b90,g91} and multivariate stable laws \citep{ah19,stable24}. For multivariate normal approximation, Stein's method is now highly refined with the power of the theory exhibited by recent results quantifying the multivariate central limit theorem in the convex and Wasserstein distances \citep{MR4003566,bonis,MR3980309}. Moreover, a general theory of Stein's method for multivariate distributions is beginning to emerge \citep{mrrs23}. However, despite these advances in Stein's method for distributions of random vectors, a systematic framework for \emph{matrix} laws is not available in the present literature. This gap matters: vectorizing a random matrix by stacking its columns obscures the inherent algebraic structure, left--right symmetries, and intrinsic dependencies that motivate matrix models in the first place. For example, matrix $T$ laws arise naturally from normal--inverse-Wishart mixtures in matrix regression and MANOVA, and their geometry is genuinely matrix-specific; it is not faithfully captured by naive vectorization \citep[p.~140]{MR1738933}. Likewise, core inference tasks such as estimating the row and column covariance factors $(\Psi,\Sigma)$ in a centered matrix normal model benefit from identities and operators that respect the Kronecker structure $\Psi\otimes\Sigma$ rather than treat the underlying observations as long vectors.

Over the years, Stein's method has been used to study problems concerning matrix distributions. Existing ``matrix Stein'' ideas have largely targeted concentration inequalities \citep{MR3189061,MR2946459,doi:10.1561/2200000048,doi:10.1214/26-EJP1472} or distributional approximations for particular statistics of random matrices, such as the rank distribution \citep{fg15}. Stein's method and the Malliavin--Stein method have also been used for distributional approximation of matrix-valued laws themselves, which has typically been done by reducing the problem from one of matrix-variate approximation to one of random-vector approximation and exploiting the framework of Stein's method for multivariate normal approximation \citep{m22,nz22,tudor}.

The purpose of this paper is to provide the first systematic development of Stein's method for matrix distributions. We focus on the matrix normal distribution on account of its central role in probability and statistics, and the fact that an extensive literature has now emerged on Stein's method for normal and multivariate normal approximation. In this work, we lay the foundations for the matrix normal setting. We begin by applying the generator approach \citep{b90,g91} to obtain a Stein equation for the matrix normal distribution. Specialized to the centered matrix normal, the Stein operator is the extended generator of a matrix Ornstein--Uhlenbeck diffusion with two-sided scales (one-sided versions were studied by \citet{Bru1987,MR1132135,arXiv:1201.3256}), which yields an extended generator identity with the matrix normal as the unique stationary distribution. Via this approach, we are naturally led to an explicit semigroup representation for the solution of the Stein equation. We then proceed to establish regularity estimates for this solution, extending widely used bounds for partial derivatives thereof in the multivariate normal setting to the matrix normal setting. Our treatment stays in matrix coordinates, preserving row/column anisotropy and the Kronecker geometry.

In Section~\ref{sec:examples}, we provide three illustrative examples of our Stein framework for matrix normal approximation. We open in Section~\ref{sec:empirical.mean} with an explicit order $n^{-1/2}$ bound to quantify the matrix central limit theorem with respect to a smooth Wasserstein distance, with constants that make explicit the row- and column-wise anisotropy induced by the Kronecker covariance structure. In Section~\ref{sec:matrix.T}, we obtain a Wasserstein distance bound between the centered matrix $T$ distribution and its Gaussian counterpart. In Section~\ref{sec:parameter.estimation}, we combine our Stein characterization of the matrix normal distribution with Stein's method of moments \citep{MR4986908} to construct a flexible class of method-of-moments Stein estimators for the row and column scale matrices, based on matrix-valued test functions. This yields weighted flip--flop updates that naturally handle row/column importance weights, systematic missingness, and projections onto structured covariance families, and we recover the flip--flop maximum-likelihood estimating equations of \citet{doi:10.1080/00949659908811970} as a special case. This example highlights the utility of the matrix normal Stein framework for statistical tasks beyond classical distributional approximation. It also aligns with the expanding role of Stein’s method in areas such as goodness-of-fit testing \citep{survey23}, parameter estimation \citep{MR4986908}, and the relaxation of the Gaussian assumption in shrinkage estimation \citep{shrink}.

The paper is organized as follows. Section~\ref{sec:definitions} gathers the necessary preliminary definitions and notation. Section~\ref{sec:main.results} develops the Stein framework for the matrix normal distribution. Section~\ref{sec:examples} illustrates the theory with three examples. Section~\ref{sec:proofs} contains the proofs of all results, with some technical lemmas deferred to \ref{app:tech.lemmas}.

\section{Definitions and notation}\label{sec:definitions}

Throughout, $[d]=\{1,\ldots,d\}$ for $d\in\N \equiv \{1,2,\ldots\}$. Let $\mathcal{S}^d$, $\mathcal{S}_+^d$, and $\mathcal{S}_{++}^d$ denote, respectively, the sets of real symmetric, nonnegative definite, and positive definite $d\times d$ matrices. For $\nu,d\in\N$, let $\R^{\nu\times d}$ denote the space of real $\nu\times d$ matrices, equipped with the Frobenius inner product $\langle A, B \rangle_F = \tr(A^{\top} B)$, and the induced norm $\|A\|_F = \!\sqrt{\langle A,A\rangle_F}$. For any square matrix $A$, let $\tr(A)$ be its trace, $\etr(A)=\exp\{\tr(A)\}$, and $|A|$ its determinant. For $S\in\mathcal{S}_+^d$, the matrix $S^{1/2}$ denotes the symmetric square root and $\|S\|_2$ the spectral norm. If $B\subseteq\R^{\nu\times d}$ is open and $m\in\N$, let $C^{m}(B)$ be the class of real-valued functions $f:B\to\R$ that are $m$ times continuously differentiable on $B$ (all partial derivatives up to total order $m$ exist and are continuous), and let $C_b^{m}(B)$ be the subclass for which these derivatives are bounded on $B$. The symbols $\bb{0}_d$, $0_{\nu\times d}$, and $I_d$ denote the $d$-dimensional zero vector, the $\nu\times d$ zero matrix, and the $d\times d$ identity, respectively. The symbol $\rightsquigarrow$ denotes convergence in distribution.

Given any $\nu,d\in \N$, $M\in \R^{\nu\times d}$, $\Psi\in \mathcal{S}_{++}^{\nu}$, and $\Sigma\in \mathcal{S}_{++}^d$, the random matrix $\mathfrak{N}$ is said to have a matrix normal distribution, written $\mathfrak{N}\sim \mathcal{N}_{\nu\times d}(M, \Psi \otimes \Sigma)$, if $\vecc(\mathfrak{N}^{\top})\sim \mathcal{N}_{\nu d}(\vecc(M^{\top}), \Psi \otimes \Sigma)$. Here, $\vecc(\cdot)$ denotes the vectorization operator that stacks the columns of a matrix on top of each other, $\mathcal{N}_{\nu d}$ is the $(\nu d)$-dimensional multivariate normal distribution, and $\otimes$ denotes the Kronecker product. The corresponding density function with respect to the Lebesgue measure is given, for all $X \in \R^{\nu\times d}$, by
\[
\phi_{M,\Psi,\Sigma}(X) = (2\pi)^{-\nu d/2} |\Psi|^{-d/2} |\Sigma|^{-\nu/2} \etr\left\{-\frac{1}{2} \Sigma^{-1} (X - M)^{\top} \Psi^{-1} (X - M)\right\};
\]
see, e.g., \citet[Theorem~2.2.1]{MR1738933}.

For a matrix-variate Markov process $(\mathfrak{M}_t)_{t\geq 0}$ taking values in $\R^{\nu\times d}$, the transition semigroup of operators $(\mathcal{P}_t)_{t\geq 0}$ is defined, for every measurable function $f$ for which the expectation below is finite, by
\[
\mathcal{P}_t f(M) = \EE[f(\mathfrak{M}_t) \mid \mathfrak{M}_0 = M], \qquad t\geq 0.
\]
The corresponding infinitesimal generator of $(\mathfrak{M}_t)_{t\geq 0}$ is defined on its domain by
\begin{equation}\label{eq:generator}
\mathcal{A} f(M) = \lim_{s\downarrow 0} \frac{\mathcal{P}_s f(M) - f(M)}{s},
\end{equation}
provided that the limit exists. More generally, for a diffusion, we use the same notation for the extended generator: if $f$ is sufficiently smooth and there exists a measurable function $g$ such that $\smash{(f(\mathfrak{M}_t) - f(\mathfrak{M}_0)-\int_0^t g(\mathfrak{M}_s) \, \rd s)_{t\geq 0}}$ is a local martingale, then we write $\mathcal{A}f = g$. When $f$ belongs to the domain of the infinitesimal generator, the two notions agree. Hence, It\^o's formula identifies $\mathcal{A} f(M)$ with the drift term in the stochastic differential of $f(\mathfrak{M}_t)$. Such a characterization is central both to describing the dynamics of the Markov process $(\mathfrak{M}_t)_{t\geq 0}$ and to deriving Stein-type identities.

\section{Main results}\label{sec:main.results}

The matrix Ornstein--Uhlenbeck (OU) process featuring a diffusion coefficient with a one-sided scale was introduced by \citet{Bru1987,MR1132135} and is well known to have a matrix normal stationary limiting distribution \citep[Section~3.5]{arXiv:1201.3256}. To obtain stationary matrix normal distributions spanning the full range of possible covariance matrices, we consider a modified version of the process driven by a diffusion coefficient with two-sided scales. Specifically, consider the $\R^{\nu\times d}$-valued process $(\mathfrak{X}_t)_{t\geq 0}$ defined through the following stochastic differential equation (SDE):
\begin{equation}\label{eq:OU.process}
\begin{aligned}
\rd \mathfrak{X}_t
&= -\mathfrak{X}_t \rd t + \!\sqrt{2} \, \Psi^{1/2} \rd \mathfrak{B}_t \Sigma^{1/2}, \qquad \mathfrak{X}_0 = X_0, \\
\end{aligned}
\end{equation}
where $X_0\in \R^{\nu\times d}$, $\Psi\in \mathcal{S}_{++}^{\nu}$, $\Sigma\in \mathcal{S}_{++}^d$ are constant matrices, and $(\mathfrak{B}_t)_{t\geq 0}$ is a $\nu\times d$ matrix of independent standard Brownian motions.

The explicit expression for the extended generator of $(\mathfrak{X}_t)_{t\geq 0}$, denoted $\mathcal{A}^{\mathrm{OU}}$, is derived in Proposition~\ref{prop:OU.process.generator} below.

\begin{proposition}[Extended generator]\label{prop:OU.process.generator}
For any $f\in C^2(\R^{\nu\times d})$, we have
\begin{equation}\label{eq:OU.process.generator}
\mathcal{A}^{\mathrm{OU}}f(X) = -\tr\{X^{\top} \nabla f(X)\} + \tr\{\Sigma \nabla^{\top} \Psi \nabla f(X)\},
\end{equation}
where $\nabla = (\partial / \partial X_{ij})_{1\leq i \leq \nu, 1 \leq j \leq d}$ denotes the $\nu\times d$ matrix of first-order partial derivatives.
\end{proposition}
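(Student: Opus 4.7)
The plan is to apply It\^o's formula to $f(\mathfrak{X}_t)$, take the conditional expectation $\EE[\,\cdot \mid \mathfrak{X}_0 = X\,]$ to remove the martingale contribution, divide by $s$, and let $s\downarrow 0$ as prescribed in \eqref{eq:generator}. Writing $\partial_{ij}\equiv\partial/\partial X_{ij}$, the SDE \eqref{eq:OU.process} combined with It\^o's formula yields
\begin{equation*}
\rd f(\mathfrak{X}_t) = \sum_{i,j} \partial_{ij} f(\mathfrak{X}_t)\,\rd(\mathfrak{X}_t)_{ij} + \frac{1}{2}\sum_{i,j,k,l} \partial_{ij}\partial_{kl}f(\mathfrak{X}_t)\,\rd\langle (\mathfrak{X})_{ij},(\mathfrak{X})_{kl}\rangle_t.
\end{equation*}
The finite-variation part $-(\mathfrak{X}_t)_{ij}\rd t$ of $\rd(\mathfrak{X}_t)_{ij}$ immediately contributes $-\sum_{i,j}(\mathfrak{X}_t)_{ij}\partial_{ij}f(\mathfrak{X}_t)\,\rd t=-\tr\{\mathfrak{X}_t^{\top}\nabla f(\mathfrak{X}_t)\}\rd t$, giving the first summand in \eqref{eq:OU.process.generator}.

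The main computation is the quadratic covariation. Expanding the martingale part of $(\mathfrak{X}_t)_{ij}$ as $\sqrt{2}\sum_{a,b}(\Psi^{1/2})_{ia}(\Sigma^{1/2})_{bj}(\mathfrak{B}_t)_{ab}$ and using the independence of the SBM entries (so $\rd\langle\mathfrak{B}_{ab},\mathfrak{B}_{cd}\rangle_t=\delta_{ac}\delta_{bd}\,\rd t$) together with the symmetry of $\Psi^{1/2}$ and $\Sigma^{1/2}$, I would obtain
\begin{equation*}
\rd\langle(\mathfrak{X})_{ij},(\mathfrak{X})_{kl}\rangle_t = 2\sum_{a,b}(\Psi^{1/2})_{ia}(\Psi^{1/2})_{ka}(\Sigma^{1/2})_{bj}(\Sigma^{1/2})_{bl}\,\rd t = 2\,\Psi_{ik}\Sigma_{jl}\,\rd t.
\end{equation*}
Substituting into the second-order sum gives $\sum_{i,j,k,l}\Psi_{ik}\Sigma_{jl}\,\partial_{ij}\partial_{kl}f(\mathfrak{X}_t)\,\rd t$; a short index rearrangement exploiting the symmetry of $\Sigma$ identifies this quadruple sum with $\tr\{\Sigma\nabla^{\top}\Psi\nabla f(\mathfrak{X}_t)\}\rd t$, which is the second summand of \eqref{eq:OU.process.generator}.

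To conclude, I would integrate from $0$ to $s$, take conditional expectations given $\mathfrak{X}_0=X$, and localize by exit times from large balls so that the stochastic-integral terms are genuine martingales with zero mean. Continuity of the integrands along the a.s.\ continuous paths of $(\mathfrak{X}_t)$ (guaranteed by $f\in C^2$) then permits dividing by $s$ and letting $s\downarrow 0$ inside the expectation, yielding \eqref{eq:OU.process.generator} via \eqref{eq:generator}. The only genuine difficulty is combinatorial: the quadruple-index bookkeeping in the It\^o correction and the identification of the resulting contraction as the matrix trace $\tr\{\Sigma\nabla^{\top}\Psi\nabla f\}$; no nontrivial analytic obstacle is anticipated.
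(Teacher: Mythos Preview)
Your proposal is correct and follows essentially the same approach as the paper: apply It\^o's formula, identify the drift contribution as $-\tr\{X^{\top}\nabla f(X)\}$, compute the quadratic covariation entrywise to obtain $2\Psi_{ik}\Sigma_{j\ell}\,\rd t$, and recognize the resulting quadruple sum as $\tr\{\Sigma\nabla^{\top}\Psi\nabla f(X)\}$. The paper is slightly terser, writing the generator directly from It\^o's formula without spelling out the localization and $s\downarrow 0$ limit, whereas you make that passage explicit; the combinatorial core of the argument is identical.
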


In the special case $\Psi = I_{\nu}$, Theorems~3.49--3.50 of \citet{arXiv:1201.3256} show that the distribution of $\mathfrak{X}_t$ given $\mathfrak{X}_0 = X$ and the stationary limiting distribution of $(\mathfrak{X}_t)_{t\geq 0}$ are given, respectively, by
\[
\mathfrak{X}_t \mid \{\mathfrak{X}_0 = X\} \sim \mathcal{N}_{\nu\times d}(e^{-t} X, (1 - e^{-2t}) (I_{\nu} \otimes \Sigma)), \qquad
\mathfrak{X}_{\infty} \sim \mathcal{N}_{\nu\times d}(0_{\nu\times d}, I_{\nu} \otimes \Sigma).
\]
The next proposition generalizes this result for general $\Psi\in \mathcal{S}_{++}^{\nu}$.

\begin{proposition}\label{prop:OU.process.distribution}
For the matrix Ornstein--Uhlenbeck process defined in \eqref{eq:OU.process}, we have
\[
\mathfrak{X}_t \mid \{\mathfrak{X}_0 = X\} \sim \mathcal{N}_{\nu\times d}(e^{-t} X, (1 - e^{-2t}) (\Psi \otimes \Sigma)), \qquad t > 0, \qquad
\mathfrak{X}_{\infty} \sim \mathcal{N}_{\nu\times d}(0_{\nu\times d}, \Psi \otimes \Sigma).
\]
In particular, let $(\mathcal{P}^{\mathrm{OU}}_t)_{t\ge0}$ be the transition semigroup with kernel $P_t(X,\rd Y)$, so that
\[
(\mathcal{P}^{\mathrm{OU}}_t h)(X) = \int_{\R^{\nu\times d}} h(Y)\,P_t(X,\rd Y)=\EE\big[h(\mathfrak{X}_t)\mid \mathfrak{X}_0=X\big].
\]
For $\mathfrak{Z}\sim \mathcal{N}_{\nu\times d}(0_{\nu\times d}, I_{\nu} \otimes I_d)$, the above shows
\[
\mathcal{P}_t^{\mathrm{OU}} h(X) = \EE\big[h(e^{-t} X + \!\sqrt{1 - e^{-2t}} \, \Psi^{1/2} \mathfrak{Z} \Sigma^{1/2})\big].
\]
Moreover, given a probability measure $\mu$ on $\R^{\nu\times d}$, the pushed-forward measure $\mu \, \mathcal{P}^{\mathrm{OU}}_t$ is defined by
\[
(\mu \, \mathcal{P}^{\mathrm{OU}}_t)(A) = \int_{\R^{\nu\times d}} P_t(X,A)\,\mu(\rd X), \qquad A\subseteq \R^{\nu\times d} \text{ Borel}.
\]
Hence, for $\gamma = \mathcal{N}_{\nu\times d}(0_{\nu\times d}, \Psi \otimes \Sigma)$, we have the invariance
\begin{equation}\label{eq:invariance}
\gamma \, \mathcal{P}^{\mathrm{OU}}_t = \gamma.
\end{equation}
\end{proposition}

This leads to the following Stein characterization of the matrix normal distribution.

\begin{corollary}[Stein characterization]\label{cor:Stein.normal.step.1}
Let $\Psi\in \mathcal{S}_{++}^{\nu}$ and $\Sigma\in \mathcal{S}_{++}^d$ be given. Then
\[
\mathfrak{X}\sim \mathcal{N}_{\nu\times d}(0_{\nu\times d}, \Psi \otimes \Sigma) \qquad \Leftrightarrow \qquad \EE\big[\mathcal{A}^{\mathrm{OU}}f(\mathfrak{X})\big] = 0 ~~\forall f\in C_{\mathcal{A}^{\mathrm{OU}}}^2(\R^{\nu\times d}),
\]
where
\[
\begin{aligned}
C_{\mathcal{A}^{\mathrm{OU}}}^2(\R^{\nu\times d})
&= \Big\{f\in C^2(\R^{\nu\times d}) \, : \, \EE[|\tr\{\mathfrak{X}^{\top} \nabla f(\mathfrak{X})\}|] < \infty, ~\EE[|\tr\{\Sigma \nabla^{\top} \Psi \nabla f(\mathfrak{X})\}|] < \infty \Big\}.
\end{aligned}
\]
\end{corollary}

For any $\alpha\in(0,1]$ and $h: \R^{\nu\times d}\to \R$, define the $\alpha$-H\"older seminorm
\begin{equation}\label{eq:seminorm}
[h]_{\alpha} = \sup_{X\neq Y} \frac{|h(X) - h(Y)|}{\|X - Y\|_F^{\alpha}},
\end{equation}
and the space of $\alpha$-H\"older continuous functions on $\R^{\nu\times d}$:
\[
C^{0,\alpha}(\R^{\nu\times d}) = \{h:\R^{\nu\times d} \to \R \mid [h]_{\alpha} < \infty\}.
\]
The class $C^{0,1}(\R^{\nu\times d})$ corresponds to the space of Lipschitz continuous functions on $\R^{\nu\times d}$ with minimum Lipschitz constant $[h]_1<\infty$. For $p\geq 0$, we will let $\smash{\mathrm{Lip}_{p}(\R^{\nu\times d})}$ denote the class of functions $f$ on $\R^{\nu\times d}$ whose partial derivatives up to order $p$ exist (we use the convention that the zeroth-order partial derivative of a function is the function itself) and whose partial derivatives of order $p$ are in the class $C^{0,1}(\R^{\nu\times d})$. For $h\in\smash{\mathrm{Lip}_{p}(\R^{\nu\times d})}$, the minimum Lipschitz constant of $\smash{(\prod_{\ell=1}^{p} \nabla_{i_{\ell} j_{\ell}}) h}$ is $\smash{\mathrm{ess\,sup}_{X\in \R^{\nu\times d}} \|\nabla \, (\prod_{\ell=1}^{p} \nabla_{i_{\ell} j_{\ell}}) h\|_F}$. In particular, the partial derivatives of order $p+1$ in $\smash{\|(\prod_{\ell=1}^{p+1} \nabla_{i_{\ell} j_{\ell}}) h\|_{\infty}}$, and hence the order-$m$ derivatives of $h$ in Theorem~\ref{thm:smoothness.estimates} when $h\in\mathrm{Lip}_{m-1}(\R^{\nu\times d})$, are understood to exist almost everywhere and the corresponding $\|\cdot\|_\infty$ norms are interpreted as scalar essential supremum norms.

Theorem~\ref{thm:Stein.solutions} below provides an explicit solution $f_h: \R^{\nu\times d}\to \R$ to the matrix normal Stein equation,
\begin{equation}\label{eq:Stein.equation.normal}
\mathcal{A}^{\mathrm{OU}} f_h(X) = h(X) - \EE[h(\mathfrak{X}_{\infty})],
\end{equation}
for test functions $h$ belonging either to $C^{0,\alpha}(\R^{\nu\times d})$ for some $\alpha\in(0,1]$, or to $\smash{\mathrm{Lip}_{p}(\R^{\nu\times d})}$ for some $p\geq 0$. For $\alpha$-H\"older test functions, it also provides a pointwise bound on the solution. Existence of the solution to the multivariate normal Stein equation under $\alpha$-H\"older regularity was established by \citet[Proposition 2.1]{gms}.

\begin{theorem}[Solutions of the matrix normal Stein equation]\label{thm:Stein.solutions}
Let $(\mathfrak{X}_t)_{t\geq 0}$ be the matrix Ornstein--Uhlenbeck process in \eqref{eq:OU.process} with transition semigroup $(\mathcal{P}^{\mathrm{OU}}_t)_{t\geq 0}$, extended generator $\mathcal{A}^{\mathrm{OU}}$ given by \eqref{eq:OU.process.generator}, and stationary limiting distribution $\gamma=\mathcal{N}_{\nu\times d}(0_{\nu\times d},\Psi\otimes\Sigma)$ from Proposition~\ref{prop:OU.process.distribution}. For every test function $h$ belonging either to $C^{0,\alpha}(\R^{\nu\times d})$ for some $\alpha\in(0,1]$, or to $\smash{\mathrm{Lip}_{p}(\R^{\nu\times d})}$ for some $p\geq 0$, the function
\begin{equation}\label{eq:fh.def.OU.alpha}
f_h(X) = - \int_0^{\infty}\Big\{\mathcal{P}^{\mathrm{OU}}_t h(X) - \EE[h(\mathfrak{X}_{\infty})]\Big\}\rd t, \qquad X\in\R^{\nu\times d},
\end{equation}
is well defined pointwise and solves the matrix normal Stein equation \eqref{eq:Stein.equation.normal} in the pointwise semigroup-generator sense, i.e., for every $X\in\R^{\nu\times d}$, $\lim_{s\downarrow 0} \{(\mathcal{P}_s^{\mathrm{OU}} f_h(X)) - f_h(X)\}/s = h(X) - \EE[h(\mathfrak{X}_\infty)]$. Moreover, if $h\in C^{0,\alpha}(\R^{\nu\times d})$ for some $\alpha\in(0,1]$, then, letting $d_{\mathrm{HK},\alpha}$ denote the $\alpha$-H\"older--Kantorovich distance on $\R^{\nu\times d}$ induced by the $\alpha$-H\"older seminorm \eqref{eq:seminorm}, that is,
\begin{equation}\label{dha}
d_{\mathrm{HK},\alpha}(\mu,\eta) = \sup\left\{\Big|\int g \, \rd \mu - \int g \, \rd \eta\Big|: g\in C^{0,\alpha}(\R^{\nu\times d}), ~[g]_{\alpha} \leq 1\right\},
\end{equation}
we have
\begin{equation}\label{eq:fh.bound.OU.alpha}
|f_h(X)|
\leq \alpha^{-1} [h]_{\alpha} d_{\mathrm{HK},\alpha}(\delta_{X}, \gamma)
\leq \alpha^{-1} [h]_{\alpha} \Big\{\EE\big[\|\mathfrak{X}_{\infty}\|_F^{\alpha}\big] + \|X\|_F^{\alpha}\Big\},
\end{equation}
where $\delta_X$ is the point mass at $X$ and $\mathfrak{X}_{\infty}\sim \gamma$.
\end{theorem}

Next, we generalize to the matrix setting bounds on the partial derivatives of the solutions of the multivariate normal Stein equation.

\begin{theorem}[Regularity of the solutions of the matrix normal Stein equation]\label{thm:smoothness.estimates}
Let $\Psi\in \mathcal{S}_{++}^{\nu}$ and $\Sigma\in \mathcal{S}_{++}^d$ be given, and let $\mathfrak{Z}\sim \mathcal{N}_{\nu\times d}(0_{\nu\times d}, I_{\nu} \otimes I_d)$. If $h\in \mathrm{Lip}_{m-1}(\R^{\nu\times d})$ for some $m\geq 1$, then
\begin{equation}\label{eq:Stein.bound.1}
\left\|\left(\prod_{\ell=1}^m \nabla_{i_{\ell} j_{\ell}}\right) f_h\right\|_{\infty}
\leq ~\frac{1}{m} \left\|\left(\prod_{\ell=1}^m \nabla_{i_{\ell} j_{\ell}}\right) h\right\|_{\infty}.
\end{equation}
Instead, if $h\in C^{0,\alpha}(\R^{\nu\times d})$ is bounded for some $\alpha\in(0,1]$, then
\begin{equation}\label{eq:Stein.bound.2}
\left\|\nabla_{ij} f_h\right\|_{\infty}
\leq \!\sqrt{\dfrac{\pi}{2}} \!\sqrt{(\Psi^{-1})_{ii} (\Sigma^{-1})_{\!j\hspace{-0.3mm}j}} ~\|h - \EE[h(\Psi^{1/2}\mathfrak{Z}\Sigma^{1/2})]\|_{\infty},
\end{equation}
and if $h\in \mathrm{Lip}_{m-2}(\R^{\nu\times d})$ with $m\geq 2$, then
\begin{equation}\label{eq:Stein.bound.3}
\left\|\left(\prod_{\ell=1}^m \nabla_{i_{\ell} j_{\ell}}\right) f_h\right\|_{\infty}
\leq \frac{\Gamma(m/2)}{\!\sqrt{2} \, \Gamma(m/2+1/2)} \min\limits_{1\leq k \leq m} \left\{\!\sqrt{(\Psi^{-1})_{i_k i_k} (\Sigma^{-1})_{j_k j_k}} \left\|\left(\prod_{\substack{\ell=1 \\ \ell\neq k}}^m \nabla_{i_{\ell} j_{\ell}}\right) h\right\|_{\infty}\right\}.
\end{equation}
Moreover, for $k\geq 1$ and any $g:\R^{\nu\times d}\to\R$, define the supremum norm (in $X$) of the operator norm of the $k$th Fr\'echet derivative as follows:
\[
\mathcal{M}_k(g) = \sup_{X\in\R^{\nu\times d}}\ \sup_{\|U_1\|_F = \dots = \|U_k\|_F = 1} \big|D^k g(X)[U_1,\ldots,U_k]\big|.
\]
If $h\in C_b^m(\R^{\nu\times d})$ for some $m\geq 1$, then
\begin{equation}\label{eq:Stein.bound.4}
\mathcal{M}_m(f_h)\leq \frac{1}{m}\mathcal{M}_m(h).
\end{equation}
If $h\in C^{0,\alpha}(\R^{\nu\times d})$ is bounded for some $\alpha\in(0,1]$, then
\begin{equation}\label{eq:Stein.bound.5}
\mathcal{M}_1(f_h)\leq \!\sqrt{\dfrac{\pi}{2}} \|\Psi^{-1/2}\|_2 \|\Sigma^{-1/2}\|_2 \|h - \EE[h(\Psi^{1/2}\mathfrak{Z}\Sigma^{1/2})]\|_{\infty},
\end{equation}
and if $h\in C_b^{m-1}(\R^{\nu\times d})$ with $m\geq 2$, then
\begin{equation}\label{eq:Stein.bound.6}
\mathcal{M}_m(f_h)\leq \frac{\Gamma(m/2)}{\!\sqrt{2} \, \Gamma(m/2 + 1/2)} \|\Psi^{-1/2}\|_2 \|\Sigma^{-1/2}\|_2 \, \mathcal{M}_{m-1}(h).
\end{equation}
\end{theorem}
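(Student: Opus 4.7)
The plan is to start from the semigroup representation
\[
f_h(X) = -\int_0^\infty \bigl\{\mathcal{P}^{\mathrm{OU}}_t h(X) - \EE[h(\mathfrak{X}_\infty)]\bigr\}\,\rd t
\]
established in Theorem~\ref{thm:Stein.solutions}, combined with the explicit Mehler-type formula $\mathcal{P}^{\mathrm{OU}}_t h(X) = \EE[h(Y_t)]$ with $Y_t = e^{-t}X + \sqrt{1-e^{-2t}}\,\Psi^{1/2}\mathfrak{Z}\Sigma^{1/2}$, from Proposition~\ref{prop:OU.process.distribution}. All six bounds will be produced by differentiating under the integral sign in $X$, in one of two modes: (i) a direct chain-rule mode that pushes each $\partial_{X_{ij}}$ (or Fr\'echet derivative) onto $h$ inside the expectation at the price of a factor $e^{-t}$; (ii) a Gaussian integration-by-parts mode that converts one derivative of $h$ into a multiplicative Gaussian weight at the price of a factor $e^{-t}/\sqrt{1-e^{-2t}}$. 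Everything then reduces to evaluating time integrals $\int_0^\infty e^{-mt}(1-e^{-2t})^{-a}\,\rd t$ with $a\in\{0,\tfrac{1}{2}\}$.

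For \eqref{eq:Stein.bound.1} and \eqref{eq:Stein.bound.4} I would use the chain-rule mode. Because $\partial_{X_{ij}} Y_t = e^{-t}$ in the $(i,j)$ slot, iterating the chain rule under the expectation gives
\[
\left(\prod_{\ell=1}^m \nabla_{i_\ell,j_\ell}\right)\mathcal{P}^{\mathrm{OU}}_t h(X) = e^{-mt}\,\EE\!\left[\left(\prod_{\ell=1}^m \partial_{i_\ell,j_\ell} h\right)(Y_t)\right];
\]
taking the sup norm of the integrand and using $\int_0^\infty e^{-mt}\,\rd t = 1/m$ yields \eqref{eq:Stein.bound.1}. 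The Fr\'echet estimate \eqref{eq:Stein.bound.4} is obtained identically with $D^m \mathcal{P}^{\mathrm{OU}}_t h(X)[U_1,\dots,U_m] = e^{-mt}\,\EE[D^m h(Y_t)[U_1,\dots,U_m]]$ in place of the coordinate identity, bounding the multilinear form by $\mathcal{M}_m(h)\prod_\ell\|U_\ell\|_F$.

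For \eqref{eq:Stein.bound.2}--\eqref{eq:Stein.bound.3} and \eqref{eq:Stein.bound.5}--\eqref{eq:Stein.bound.6} the central identity is matrix Gaussian integration by parts. Since $\partial_{\mathfrak{Z}_{pq}} h(Y_t) = \sqrt{1-e^{-2t}}\,[\Psi^{1/2}\nabla h(Y_t)\Sigma^{1/2}]_{pq}$, Stein's identity $\EE[\mathfrak{Z}_{pq} F(\mathfrak{Z})] = \EE[\partial_{\mathfrak{Z}_{pq}} F(\mathfrak{Z})]$ and inverting the $\Psi^{1/2}\!\cdot\!\Sigma^{1/2}$ sandwich yields
\[
\EE[(\partial_{ij} F)(Y_t)] = \frac{1}{\sqrt{1-e^{-2t}}}\,\EE\bigl[F(Y_t)\,[\Psi^{-1/2}\mathfrak{Z}\Sigma^{-1/2}]_{ij}\bigr].
\]
For \eqref{eq:Stein.bound.2}/\eqref{eq:Stein.bound.5} I take $F=h$; for \eqref{eq:Stein.bound.3}/\eqref{eq:Stein.bound.6} I take $F = (\prod_{\ell\neq k}\partial_{i_\ell,j_\ell}) h$ for a ``sacrificial'' index $k$, leaving $m-1$ ordinary derivatives on $h$. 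The weight $[\Psi^{-1/2}\mathfrak{Z}\Sigma^{-1/2}]_{ij}$ is centered Gaussian with variance $(\Psi^{-1})_{ii}(\Sigma^{-1})_{jj}$, so its $L^1$ norm equals $\sqrt{2(\Psi^{-1})_{ii}(\Sigma^{-1})_{jj}/\pi}$; the Fr\'echet analog uses the operator-norm inequality $\|\Psi^{-1/2}U\Sigma^{-1/2}\|_F \le \|\Psi^{-1/2}\|_2\|\Sigma^{-1/2}\|_2\|U\|_F$. The additive constant $\EE[h(\mathfrak{X}_\infty)]$ drops out because the weight has mean zero, which lets me replace $h$ by $h-\EE[h(\mathfrak{X}_\infty)]$ in the $\|h\|_\infty$-type bounds. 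The remaining time integral is
\[
\int_0^\infty \frac{e^{-mt}}{\sqrt{1-e^{-2t}}}\,\rd t = \frac{1}{2}B\!\left(\frac{m}{2},\frac{1}{2}\right) = \frac{\sqrt{\pi}\,\Gamma(m/2)}{2\,\Gamma(m/2+1/2)}
\]
via $u=e^{-t}$ and the beta integral. Taking the minimum over $k\in[m]$ produces the minimum on the right of \eqref{eq:Stein.bound.3}, and assembling the constants (noting that $\sqrt{2/\pi}\cdot\pi/2 = \sqrt{\pi/2}$ recovers the $m=1$ value in \eqref{eq:Stein.bound.2}/\eqref{eq:Stein.bound.5}) yields all four bounds.

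The main technical hurdle is justifying differentiation under the integral sign and the Gaussian IBP step when $h$ has limited regularity: only bounded in \eqref{eq:Stein.bound.2}/\eqref{eq:Stein.bound.5}, and only $\mathrm{Lip}_{m-1}$ (resp.\ $C_b^{m-1}$) rather than $C^m$ elsewhere. Fortunately, the Gaussian convolution in $\mathcal{P}^{\mathrm{OU}}_t$ makes $\mathcal{P}^{\mathrm{OU}}_t h$ infinitely differentiable in $X$ for every $t>0$, so pointwise derivatives always exist; the task is to produce a $t$-integrable dominating function so dominated convergence applies at both endpoints. The bounds derived above already serve this purpose, since both $\int_0^\infty e^{-mt}\,\rd t$ and $\int_0^\infty e^{-mt}(1-e^{-2t})^{-1/2}\,\rd t$ are finite for $m\ge 1$. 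In the Lipschitz cases the required $m$-th derivative of $h$ exists Lebesgue-a.e.\ by Rademacher's theorem, and a standard mollification of $h$ together with passage to the limit in the uniform estimates discharges the smoothness technicality.
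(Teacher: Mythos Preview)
Your proposal is correct and follows essentially the same route as the paper: semigroup representation, differentiate under the time integral, push $m$ (or $m-1$) derivatives onto $h$ via the chain rule to pick up $e^{-mt}$, convert one remaining derivative into the Gaussian weight $(\Psi^{-1/2}\mathfrak{Z}\Sigma^{-1/2})_{ij}$, and evaluate the Beta integral $\int_0^\infty e^{-mt}(1-e^{-2t})^{-1/2}\,\rd t$. The only packaging difference is that where you invoke Stein's identity $\EE[\mathfrak{Z}_{pq}F(\mathfrak{Z})]=\EE[\partial_{\mathfrak{Z}_{pq}}F(\mathfrak{Z})]$ and then invert the $\Psi^{1/2}\cdot\Sigma^{1/2}$ sandwich, the paper instead performs the explicit change of variable $Y=\lambda_{t,X}(Z)$ and differentiates the standard normal density $\phi(\lambda_{t,X}^{-1}(Y))$ in $X$; both produce the identical identity \eqref{eq:m.equal.1.case}, and the paper's version has the minor advantage that it needs no differentiability of $h$ at any intermediate step (so your mollification remark is unnecessary there).
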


\begin{remark}
Inequality \eqref{eq:Stein.bound.1} generalizes an analogous bound for the solution of the multivariate normal Stein equation which is given in \cite{b90,gr96,rr09}. Inequalities \eqref{eq:Stein.bound.2}, \eqref{eq:Stein.bound.3}, \eqref{eq:Stein.bound.5} and \eqref{eq:Stein.bound.6} generalize bounds from \citet[Proposition~2.1]{MR3463084}. Inequality \eqref{eq:Stein.bound.4} generalizes the bound given in \citet[Lemma~2, part~1]{meckes}.
\end{remark}

\section{Examples}\label{sec:examples}

This section provides three examples of the matrix normal Stein framework. The first two examples concern distributional approximation, while our third example concerns parameter estimation. The results of Example~2 (Section~\ref{sec:matrix.T}) and Example~3 (Section~\ref{sec:parameter.estimation}) cannot be obtained by naive vectorization. In order to state our bounds, we introduce the probability metrics that we will work with. Let $\mu$ and $\eta$ be the probability measures of the $\R^{\nu\times d}$-valued random matrices $\mathfrak{X}$ and $\mathfrak{Y}$. Suppose that $\EE[\|\mathfrak{X}\|_F]<\infty$ and $\EE[\|\mathfrak{Y}\|_F]<\infty$. Then the Wasserstein distance between $\mu$ and $\eta$ is given by $d_{\mathrm{W}}(\mu,\eta)=d_{\mathrm{HK},1}(\mu,\eta)$, where $d_{\mathrm{HK},\alpha}(\mu,\eta)$ is defined in \eqref{dha}. Now, let $p\geq1$ and suppose that $\EE[\|\mathfrak{X}\|_F^p]<\infty$ and $\EE[\|\mathfrak{Y}\|_F^p]<\infty$. For $p\geq1$, we will also work with smooth Wasserstein distances given by
\[
d_{p}(\mu,\eta)=\sup_{h\in\mathcal{H}_p}\big|\EE[h(\mathfrak{X})] - \EE[h(\mathfrak{Y})]\big|, \vspace{-5mm}
\]
where
\[
\mathcal{H}_p
= \Big\{h:\R^{\nu\times d}\to \R \, : \, \text{$h$ is $(p-1)$-times differentiable and $\textstyle[(\prod_{\ell=1}^{p-1} \nabla_{i_{\ell} j_{\ell}}) h]_1\leq1$ for all $1\leq i_{\ell}\leq \nu$, $1\leq j_{\ell}\leq d$} \Big\}.
\]
Note that $d_{1}(\mu,\eta)=d_{\mathrm{W}}(\mu,\eta)$.

\subsection{Example 1: Comparison between the standardized sample mean of iid matrix-valued observations and the matrix normal distribution with the same covariance}\label{sec:empirical.mean}

We open by deriving smooth Wasserstein distance bounds to quantify the matrix normal approximation of a standardized sum of iid random matrices. In the multivariate case, this problem has provided a canonical first example to illustrate the use of coupling techniques within Stein's method for multivariate normal approximation \citep{gr96,gr05,cm08,rr09} in which the distributional approximation is quantified in (the equivalent of) smooth Wasserstein distances. Since these works, Stein's method for multivariate normal approximation has reached a rather sophisticated level, with the power of the theory exhibited by recent results quantifying the multivariate central limit theorem in the convex and Wasserstein distances \citep{MR4003566,bonis,MR3980309}. Our bounds given in the following proposition can be viewed as matrix normal analogs of the bounds of \citet{gr96,gr05,cm08,rr09}, and provide a simple exposition of how our Stein framework for matrix normal approximation works in conjunction with local approach couplings.

\begin{proposition}\label{prop:generalization.Gaunt.2013.Example.2.6}
Let $\mathfrak{X},\mathfrak{X}_1,\mathfrak{X}_2,\ldots,\mathfrak{X}_n$ be a collection of iid random matrices of size $\nu\times d$ such that $\EE[\mathfrak{X}] = 0_{\nu\times d}$, $\smash{\EE[\vecc(\mathfrak{X}^{\top}) \vecc(\mathfrak{X}^{\top})^{\top}] = \Psi \otimes \Sigma}$, and $\EE[|\mathfrak{X}_{ij}|^3] < \infty$ for all $(i,j)\in [\nu] \times [d]$. Let $\smash{\mathfrak{S} = n^{-1/2} \sum_{k=1}^n \mathfrak{X}_k}$ and $\mathfrak{Z}\sim\mathcal{N}_{\nu\times d}(0_{\nu\times d},\Psi\otimes\Sigma)$. Denote the probability measures of $\mathfrak{S}$ and $\mathfrak{Z}$ by $\mu_n$ and $\gamma$, respectively. Then
\begin{align}
d_3(\mu_n,\gamma)
&\leq \frac{1}{3\!\sqrt{n}}\sum_{i_1,i_2,i_3=1}^{\nu} \sum_{j_1,j_2,j_3=1}^d \left\{\frac{1}{2} \EE[|\mathfrak{X}_{i_1j_1} \mathfrak{X}_{i_2j_2} \mathfrak{X}_{i_3j_3}|] + |\Psi_{i_1i_2} \Sigma_{j_1j_2}| \EE[|\mathfrak{X}_{i_3j_3}|]\right\}, \label{cltbd1} \\
d_2(\mu_n,\gamma)
&\leq \frac{\!\sqrt{2\pi}}{4\!\sqrt{n}} \, \max_{i\in [\nu], j\in [d]} \!\sqrt{(\Psi^{-1})_{ii} (\Sigma^{-1})_{\!j\hspace{-0.3mm}j}} \sum_{i_1,i_2,i_3=1}^{\nu} \sum_{j_1,j_2,j_3=1}^d \left\{\frac{1}{2} \EE[|\mathfrak{X}_{i_1j_1} \mathfrak{X}_{i_2j_2} \mathfrak{X}_{i_3j_3}|] + |\Psi_{i_1i_2} \Sigma_{j_1j_2}| \EE[|\mathfrak{X}_{i_3j_3}|]\right\}. \label{cltbd2}
\end{align}
\end{proposition}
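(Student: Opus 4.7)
The plan is a matrix-variate version of the standard local-coupling Stein argument for sums of iid random variables. Fix a test function $h\in\mathcal{H}_p$ with $p\in\{2,3\}$, let $f_h$ be the solution of the Stein equation \eqref{eq:Stein.equation.normal} supplied by Theorem~\ref{thm:Stein.solutions}, and use the explicit form \eqref{eq:OU.process.generator} of $\mathcal{A}^{\mathrm{OU}}$ to write
\begin{equation*}
\EE[h(\mathfrak{S})]-\EE[h(\mathfrak{Z})]=\EE[\mathcal{A}^{\mathrm{OU}} f_h(\mathfrak{S})]=-\sum_{i,j}\EE[\mathfrak{S}_{ij}\nabla_{ij} f_h(\mathfrak{S})]+\sum_{i,j,a,b}\Psi_{ia}\Sigma_{jb}\,\EE[\nabla_{ij}\nabla_{ab} f_h(\mathfrak{S})].
\end{equation*}

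Introduce the leave-one-out copy $\mathfrak{S}^{(1)}:=\mathfrak{S}-n^{-1/2}\mathfrak{X}_1$, which is independent of $\mathfrak{X}_1$. By iid symmetry the first sum equals $\sqrt n\sum_{i,j}\EE[(\mathfrak{X}_1)_{ij}\nabla_{ij}f_h(\mathfrak{S})]$. I would then Taylor-expand $\nabla_{ij}f_h(\mathfrak{S})$ around $\mathfrak{S}^{(1)}$ to second order with integral remainder and $\nabla_{ij}\nabla_{ab}f_h(\mathfrak{S})$ around $\mathfrak{S}^{(1)}$ to first order, and take expectations. The independence of $\mathfrak{X}_1$ and $\mathfrak{S}^{(1)}$ together with $\EE[\mathfrak{X}_1]=0_{\nu\times d}$ and the key identity
\begin{equation*}
\EE[(\mathfrak{X}_1)_{ij}(\mathfrak{X}_1)_{ab}]=\Psi_{ia}\Sigma_{jb}
\end{equation*}
(a direct restatement of $\mathrm{Cov}(\vecc(\mathfrak{X}_1^{\top}))=\Psi\otimes\Sigma$) make the zeroth- and leading second-order contributions cancel \emph{exactly} the second-order piece coming from the generator. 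Only two third-order integral remainders survive, which, on taking absolute values and using $\int_0^1(1-t)\,\mathrm{d}t=1/2$, give
\begin{equation*}
|\EE[\mathcal{A}^{\mathrm{OU}}f_h(\mathfrak{S})]|\leq\frac{1}{\sqrt n}\sum_{i,j,a,b,c,d}\Big\{\tfrac12\EE[|\mathfrak{X}_{ij}\mathfrak{X}_{ab}\mathfrak{X}_{cd}|]+|\Psi_{ia}\Sigma_{jb}|\EE[|\mathfrak{X}_{cd}|]\Big\}\bigl\|\nabla_{ij}\nabla_{ab}\nabla_{cd}f_h\bigr\|_\infty.
\end{equation*}

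To finish, I would relabel $(i,j,a,b,c,d)\mapsto(i_1,j_1,i_2,j_2,i_3,j_3)$ and plug in the derivative estimates of Theorem~\ref{thm:smoothness.estimates}. For \eqref{cltbd1}, $h\in\mathcal{H}_3\subset\mathrm{Lip}_2$ gives $\|\nabla^{3}h\|_\infty\leq 1$, so inequality \eqref{eq:Stein.bound.1} with $m=3$ yields $\|\nabla^{3} f_h\|_\infty\leq 1/3$; factoring $1/3$ out of the sum produces the stated bound. For \eqref{cltbd2}, $h\in\mathcal{H}_2\subset\mathrm{Lip}_1$ supplies only $\|\nabla^{2}h\|_\infty\leq 1$, so I would instead invoke inequality \eqref{eq:Stein.bound.3} with $m=3$; since $\Gamma(2)/\Gamma(3/2)=2/\sqrt\pi$, this yields
\begin{equation*}
\bigl\|\nabla_{i_1 j_1}\nabla_{i_2 j_2}\nabla_{i_3 j_3} f_h\bigr\|_\infty \leq \tfrac{\sqrt{2\pi}}{4}\min_{k\in\{1,2,3\}}\sqrt{(\Psi^{-1})_{i_k i_k}(\Sigma^{-1})_{j_k j_k}},
\end{equation*}
and bounding this minimum by $\max_{i\in[\nu],j\in[d]}\sqrt{(\Psi^{-1})_{ii}(\Sigma^{-1})_{jj}}$ and pulling the constant outside the six-fold sum produces \eqref{cltbd2}.

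The main obstacle, conceptually light but arithmetically delicate, is the index bookkeeping required to verify the second-order cancellation in matrix-Kronecker form: the four-index pattern $\EE[(\mathfrak{X}_1)_{ij}(\mathfrak{X}_1)_{ab}]=\Psi_{ia}\Sigma_{jb}$ must reproduce \emph{exactly} the pattern of $\sum_{i,j,a,b}\Psi_{ia}\Sigma_{jb}\nabla_{ij}\nabla_{ab}$ in the second-order part of $\mathcal{A}^{\mathrm{OU}}$. Once this Kronecker match is confirmed, the remainder of the argument is routine Taylor-remainder estimation paired with the already-proved regularity bounds.
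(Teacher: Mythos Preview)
Your proposal is correct and follows essentially the same route as the paper's own proof: leave-one-out coupling, two Taylor expansions (second order on $\nabla f_h$ and first order on $\nabla^2 f_h$) around the leave-one-out sum, exact cancellation via $\EE[(\mathfrak{X}_1)_{ij}(\mathfrak{X}_1)_{ab}]=\Psi_{ia}\Sigma_{jb}$, and the regularity bounds \eqref{eq:Stein.bound.1} and \eqref{eq:Stein.bound.3} with $m=3$. The only cosmetic difference is that the paper sums over all $k$ rather than invoking iid symmetry to reduce to $k=1$; as a minor notational point, you have reused $d$ as a summation index while it already denotes the column dimension.
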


\begin{remark}\label{rem:empirical.mean.smooth.distances}
Proposition~\ref{prop:generalization.Gaunt.2013.Example.2.6} is stated only for smooth Wasserstein distances (not $d_{\mathrm{W}} = d_1$) because its proof uses a third-order Taylor expansion of the solution $f_h$ of the matrix normal Stein equation, and hence requires uniform control of third-order partial derivatives of $f_h$. Inequality (\ref{eq:Stein.bound.1}) of Theorem~\ref{thm:smoothness.estimates} gives this control directly for $h\in\mathcal{H}_3$, yielding \eqref{cltbd1}, while inequality \eqref{eq:Stein.bound.3} controls third-order partial derivatives of $f_h$ for $h\in\mathcal{H}_2$, yielding \eqref{cltbd2}. However, when $\nu\geq2$ or $d\geq2$, for general Lipschitz test functions $h:\R^{\nu\times d}\to \R$, it is not possible to obtain uniform bounds on the third-order partial derivatives of $f_h$ in terms of the Lipschitz constant of $h$ alone. This can be seen from Remark~2 of \citet{raic04} (see also Proposition 2.4 of \citet{gms}), which gives a Lipschitz test function $h:\R^2\to\R$ for which the mixed second-order partial derivative of the solution of the corresponding multivariate normal Stein equation is not Lipschitz. In particular, the Hessian of $f_h$ need not be Lipschitz, so uniform third-order derivative bounds cannot hold in general.

The theory of Stein's method for multivariate normal approximation has now developed to the point where this difficulty can be bypassed: researchers have established Wasserstein-type bounds for the multivariate central limit theorem with optimal and near-optimal rates of order $n^{-1/2}$ and $\log(n)/\!\sqrt{n}$, respectively \citep{bonis,MR3980309,gms}. A natural direction for future research is to further develop Stein's method for matrix normal approximation so that it can be used to derive optimal-order Wasserstein bounds for the matrix normal central limit theorem.
\end{remark}

\begin{remark}
The matrix framework is not necessary for Proposition~\ref{prop:generalization.Gaunt.2013.Example.2.6}; this example is included mainly as a benchmark to familiarize the reader with the matrix Stein setup. Indeed, under the identification $\R^{\nu\times d}\cong\R^{\nu d}$ given by the mapping $X\mapsto \vecc(X^{\top})$, set
\[
\bb{X} = \vecc(\mathfrak{X}^{\top}),\qquad \bb{X}_k=\vecc(\mathfrak{X}_k^{\top}), \qquad
\bb{S} = \vecc(\mathfrak{S}^{\top})=\frac{1}{\!\sqrt{n}}\sum_{k=1}^n \bb{X}_k,
\qquad \Omega = \Psi\otimes\Sigma,
\]
so that $\Var(\bb{X}) = \Omega$ and $\bb{Z} = \vecc(\mathfrak{Z}^{\top})\sim\mathcal{N}_{\nu d}(\bb{0}_{\nu d},\Omega)$. Applying the standard leave-one-out Taylor expansion for the multivariate normal Stein equation, together with the derivative bounds of \citet[Proposition~2.1]{MR3463084}, gives
\[
\begin{aligned}
d_3\big(\mathrm{Law}(\bb{S}),\mathrm{Law}(\bb{Z})\big)
&\leq \frac{1}{3\!\sqrt{n}}\sum_{a,b,c=1}^{\nu d}\left\{\frac{1}{2} \, \EE\big[|\bb{X}_a\bb{X}_b\bb{X}_c|\big] + |\Omega_{ab}|\,\EE\big[|\bb{X}_c|\big]\right\}, \\
d_2\big(\mathrm{Law}(\bb{S}),\mathrm{Law}(\bb{Z})\big)
&\leq \frac{\!\sqrt{2\pi}}{4\!\sqrt{n}} \, \max_{a\in[\nu d]}\!\sqrt{(\Omega^{-1})_{aa}} \, \sum_{a,b,c=1}^{\nu d}\left\{\frac{1}{2} \, \EE\big[|\bb{X}_a\bb{X}_b\bb{X}_c|\big] + |\Omega_{ab}|\,\EE\big[|\bb{X}_c|\big]\right\}.
\end{aligned}
\]
Reindexing $(a,b,c)=((i_1,i_2,i_3) - (1,1,1)) \, d+(j_1,j_2,j_3)$ and using $\Omega_{ab}=\Psi_{i_1i_2}\Sigma_{j_1j_2}$ and $(\Omega^{-1})_{aa}=(\Psi^{-1})_{i_1i_1}(\Sigma^{-1})_{j_1j_1}$, we recover inequalities~\eqref{cltbd1}~and~\eqref{cltbd2}.
\end{remark}

\subsection{Example 2: Comparison between the centered matrix \texorpdfstring{$T$}{T} and normal distributions}\label{sec:matrix.T}

The centered matrix $T$ distribution plays a central role in matrix-variate modeling: it arises naturally as an inverse-Wishart scale mixture of a matrix normal in the context of predictive distributions in the matrix-variate linear model, and, after standardization by its row and column scale matrices, it has the left/right spherical symmetries characteristic of matrix-variate laws; see \citet{MR614963}. Let us recall the definition of the centered matrix $T$ distribution. Let $n>0$ denote the real degrees-of-freedom parameter, and $\Psi\in\mathcal{S}_{++}^{\nu}$ and $\Sigma\in\mathcal{S}_{++}^{d}$ the row- and column-scale matrices. Adapting Definition~4.2.1 of \citet{MR1738933} with an extra $\!\sqrt{n}$-scaling, a random matrix $\mathfrak{T}_n\in \R^{\nu\times d}$ is said to have the centered matrix $T$ distribution of degree $n$, written $\mathfrak{T}_n\sim \mathcal{T}_{\nu\times d}(n,0_{\nu\times d},\Psi\otimes\Sigma)$, if its density is given, for all $X\in \R^{\nu\times d}$, by
\begin{equation}\label{eq:matrix.T}
p_{\mathfrak{T}_n}(X)
= \frac{\Gamma_{\nu}((n+\nu+d-1)/2) \big|I_{\nu} + n^{-1} \Psi^{-1} X \Sigma^{-1} X^{\top}\big|^{-(n+\nu+d-1)/2}}{(\pi n)^{\nu d/2} \Gamma_{\nu}((n+\nu-1)/2) |\Psi|^{d/2} |\Sigma|^{\nu/2}},
\end{equation}
where $\Gamma_{\nu}$ denotes the multivariate gamma function. Because of the extra $\!\sqrt{n}$-scaling, this definition has an extra $n^{-(\nu d)/2}$ factor in the normalization constant compared to Definition~4.2.1 of \citet{MR1738933}. However, this version is the direct generalization of the univariate Student distribution; in particular, we have $\mathfrak{T}_n\rightsquigarrow \mathcal{N}_{\nu\times d}(0_{\nu\times d},\Psi \otimes \Sigma)$ as $n\to \infty$.

In Proposition~\ref{prop:T.vs.normal} below, we derive, for $n>2$, an explicit Wasserstein distance bound to quantify the matrix normal approximation of the matrix $T$ distribution. We derive our bound using the comparison-of-Stein-operators approach; an exposition of this approach in the univariate case is given in the survey of \citet{lrs17}, with one of their examples being the derivation of total variation distance bounds for the normal approximation of Student's $t$-distribution. As part of our derivation, we construct a Stein operator for the matrix $T$ law, via its Langevin diffusion.

\begin{proposition}[Wasserstein distance bound between the matrix $T$ and matrix normal]\label{prop:T.vs.normal}
Let $\mathfrak{Z}\sim\mathcal{N}_{\nu\times d}(0_{\nu\times d},\Psi\otimes\Sigma)$ and $\mathfrak{T}_n\sim\mathcal{T}_{\nu\times d}(n,0_{\nu\times d},\Psi\otimes\Sigma)$ for some real $n > 2$. Denote the probability measures of $\mathfrak{T}_n$ and $\mathfrak{Z}$ by $\tau_n$ and $\gamma$, respectively. Then
\begin{equation}\label{tbound}
d_{\mathrm{W}}(\tau_n,\gamma) \leq \frac{10}{\!\sqrt{1-2/n}} \, \frac{(\nu d)^2}{n} \|\Psi\|_2^{1/2} \|\Sigma\|_2^{1/2}.
\end{equation}
For fixed $\nu$, $d$, $\Psi$ and $\Sigma$, the $n^{-1}$ rate of the bound \eqref{tbound} is optimal.
\end{proposition}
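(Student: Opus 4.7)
The plan is to apply the comparison-of-Stein-operators approach. I would first construct a Stein operator for the matrix $T$ distribution via its Langevin SDE, with the diffusion coefficient chosen to coincide with that of the matrix OU process \eqref{eq:OU.process} so that the diffusion parts cancel on subtraction. A direct matrix-calculus computation of the score from \eqref{eq:matrix.T} gives
\[
\nabla\log p_{\mathfrak{T}_n}(X) = -(n+\nu+d-1)\,(n\Psi+X\Sigma^{-1}X^\top)^{-1}X\Sigma^{-1},
\]
and the Langevin drift $b_T(X)=\Psi\,\nabla\log p_{\mathfrak{T}_n}(X)\,\Sigma$ yields the generator
\[
\mathcal{A}^Tf(X) = \tr\{\Sigma\nabla^\top\Psi\nabla f(X)\} + \tr\{b_T(X)^\top\nabla f(X)\},
\]
which annihilates $\EE[\,\cdot\,(\mathfrak{T})]$ by the standard stationary integration-by-parts identity for sufficiently regular $f$.

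Given a Lipschitz test function $h$ with $[h]_1\leq 1$, let $f_h$ be the matrix normal Stein solution from Theorem~\ref{thm:Stein.solutions}. The transfer principle combined with $\EE[\mathcal{A}^Tf_h(\mathfrak{T})]=0$ and the diffusion cancellation yields
\[
\EE[h(\mathfrak{T})]-\EE[h(\mathfrak{Z})] = -\EE\bigl[\tr\{(\mathfrak{T}+b_T(\mathfrak{T}))^\top\nabla f_h(\mathfrak{T})\}\bigr],
\]
and Cauchy--Schwarz paired with $\mathcal{M}_1(f_h)\leq\mathcal{M}_1(h)\leq 1$ from inequality \eqref{eq:Stein.bound.4} with $m=1$ (after mollifying $h$ to $C_b^1$ by density, if needed) reduces the task to bounding $\EE[\|\mathfrak{T}+b_T(\mathfrak{T})\|_F]$. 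Via the push-through identity $(nI_\nu+YY^\top)^{-1}Y = Y(nI_d+Y^\top Y)^{-1}$ applied after the whitening change of variables $Y=\Psi^{-1/2}\mathfrak{T}\Sigma^{-1/2}$, the key algebraic simplification is
\[
\mathfrak{T}+b_T(\mathfrak{T}) = \Psi^{1/2}\,Y\bigl[Y^\top Y-(\nu+d-1)I_d\bigr]\bigl(nI_d+Y^\top Y\bigr)^{-1}\Sigma^{1/2},
\]
which cleanly exposes the target $\|\Psi\|_2^{1/2}\|\Sigma\|_2^{1/2}$ spectral factor in \eqref{tbound}. Bounding the Frobenius norm of the central bracket by a diagonalization argument and the AM--GM spectral estimate $\sigma_i/(n+\sigma_i^2)\leq 1/(2\sqrt{n})$ on the singular values of $Y$, and taking expectations using the matrix $T$ covariance $\EE[\vecc(\mathfrak{T}^\top)\vecc(\mathfrak{T}^\top)^\top]=\tfrac{n}{n-2}(\Psi\otimes\Sigma)$ (which yields $\EE[\|Y\|_F^2]=\nu d/(1-2/n)$ and produces the $1/\sqrt{1-2/n}$ factor), together with the trace-to-spectral-norm inequalities $\tr(\Psi)\leq\nu\|\Psi\|_2$ and $\tr(\Sigma)\leq d\|\Sigma\|_2$, delivers the remaining $(\nu d)^2/n$ dimensional factor.

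The main obstacle is controlling the nonlinear part of $\mathfrak{T}+b_T(\mathfrak{T})$ without invoking moments of $\mathfrak{T}$ beyond the second available under $n>2$. A naive Neumann expansion of $(I+A)^{-1}$ produces an $O(\|\mathfrak{T}\|_F^3/n)$ correction whose expectation requires $n>3$, while the crude bound $\|A(I+A)^{-1}\|_2\leq 1$ destroys the $1/n$ decay altogether. The push-through identity combined with AM--GM, as above, reconciles these two regimes and keeps the estimate driven by $\EE[\|Y\|_F^2]$; obtaining the advertised constants---in particular the clean spectral dependence $\|\Psi\|_2^{1/2}\|\Sigma\|_2^{1/2}$ (rather than the unwanted $\|\Psi^{-1}\|_2$ factors that cruder Frobenius/operator bounds would introduce) and the absorption of the prefactors $(n+\nu+d-1)(\nu+d-1)$ and Jensen slack into the constant $10$ in \eqref{tbound}---is the most delicate arithmetic step. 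The auxiliary verification that $\EE[\mathcal{A}^Tf_h(\mathfrak{T})]=0$ for the specific regularity of $f_h$ (polynomially growing drift paired with bounded first derivative) is routine.
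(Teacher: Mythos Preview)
Your algebraic setup is sound, and in fact your push-through identity
\[
\mathfrak{T}+b_T(\mathfrak{T}) = \Psi^{1/2}\,Y\bigl[Y^\top Y-(\nu+d-1)I_d\bigr]\bigl(nI_d+Y^\top Y\bigr)^{-1}\Sigma^{1/2}
\]
is a cleaner packaging than the paper's split into two trace terms. The gap is in the moment estimate that follows. After diagonalizing, the Frobenius norm of the central factor is
\[
\Bigl(\sum_i \sigma_i^2\,\frac{(\sigma_i^2-(\nu+d-1))^2}{(n+\sigma_i^2)^2}\Bigr)^{1/2},
\]
and the cubic contribution $\sigma_i^3/(n+\sigma_i^2)$ is the obstruction. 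Your AM--GM bound $\sigma_i/(n+\sigma_i^2)\leq 1/(2\sqrt{n})$ yields $\sigma_i^3/(n+\sigma_i^2)\leq \sigma_i^2/(2\sqrt{n})$, so after taking expectations you obtain at best
\[
\EE\bigl[\|\mathfrak{T}+b_T(\mathfrak{T})\|_F\bigr]\ \lesssim\ \|\Psi\|_2^{1/2}\|\Sigma\|_2^{1/2}\,\frac{\EE[\|Y\|_F^2]}{\sqrt{n}}\ \sim\ \frac{\nu d}{\sqrt{n}(1-2/n)}\,\|\Psi\|_2^{1/2}\|\Sigma\|_2^{1/2}.
\]
This is an $O(n^{-1/2})$ rate, not the $O(n^{-1})$ rate of \eqref{tbound}; for large $n$ your bound is strictly weaker than the target (take $\nu=d=1$, $n=10^4$: your estimate is of order $10^{-2}$ while the right-hand side of \eqref{tbound} is of order $10^{-3}$). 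No rearrangement can repair this using only second moments: if $\sigma$ places mass $c/n$ at $\sqrt{n}$ and the rest at $0$, then $\EE[\sigma^2]=c$ is bounded while $\EE[\sigma^3/(n+\sigma^2)]=c/(2\sqrt{n})$, so the cubic term genuinely requires higher-order moment input to reach $O(1/n)$.

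The paper does not avoid higher moments. It first restricts to $n>4$, bounds the drift difference by $n^{-1}\EE[\|Y\|_F^3]+(\nu+d-1)n^{-1}\EE[\|Y\|_F]$ (essentially your identity with the crude estimate $\sigma_i^3/(n+\sigma_i^2)\leq \sigma_i^3/n$), and controls $\EE[\|Y\|_F^3]\leq (\EE\|Y\|_F^2)^{1/2}(\EE\|Y\|_F^4)^{1/2}$ via an explicit inverse-Wishart computation of $\EE[\|Y\|_F^4]$. This gives \eqref{tbound} for $n>4$. The range $n\in(2,5)$ is then covered separately by the crude coupling bound $d_{\mathrm{W}}(\tau_n,\gamma)\leq \EE\|\mathfrak{T}\|_F+\EE\|\mathfrak{Z}\|_F$, which uses only second moments and is dominated by the right-hand side of \eqref{tbound} precisely because $(\nu d)^2/n\geq \sqrt{\nu d}/5$ on that range. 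Your proposal is missing this two-regime split; the claim that AM--GM ``reconciles these two regimes'' and delivers $1/n$ from $\EE[\|Y\|_F^2]$ alone is where the argument fails.
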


\begin{remark}\label{rem:T.n.condition}
The bound (\ref{tbound}) is derived in the case $n>4$ using the comparison-of-Stein-operators approach, where the condition $n>4$ arises from the fact that in carrying out this approach we are required to calculate fourth-order Frobenius moments of the matrix $T$ distribution, which only exist for $n>4$. We then extend the range of validity of the bound to $n>2$ by complementing this bound with the trivial estimate $d_{\mathrm{W}}(\tau_n,\gamma) \leq \EE[\|\mathfrak{T}_n-\mathfrak{Z}\|_F] \leq \EE[\|\mathfrak{T}_n\|_F]+ \EE[\|\mathfrak{Z}\|_F] \leq \smash{(\EE[\|\mathfrak{T}_n\|_F^2])^{1/2} + (\EE[\|\mathfrak{Z}\|_F^2])^{1/2}}$ and then computing these second-order Frobenius moments, for which $\EE[\|\mathfrak{T}_n\|_F^2]$ only exists for $n>2$. The reason why we did not apply the alternative trivial bound $d_{\mathrm{W}}(\tau_n,\gamma)\leq \EE[\|\mathfrak{T}_n\|_F]+ \EE[\|\mathfrak{Z}\|_F]$ (which holds for $n>1$) is that there is no simple formula for $\EE[\|\mathfrak{T}_n\|_F]$, so we instead elected to apply the Cauchy-Schwarz inequality to bound this moment in terms of $\EE[\|\mathfrak{T}_n\|_F^2]$, for which a simple exact formula is available. It should be noted that the Wasserstein distance only exists for distributions with finite absolute first moments, and since the first absolute moment of Student's $t$-distribution does not exist for $n\leq1$, the Wasserstein distance $d_{\mathrm{W}}(\tau_n,\gamma)$ does not exist for $n\leq1$.
\end{remark}

\begin{remark}
As noted in Section~\ref{sec:intro}, the centered matrix $T$ law is genuinely matrix-variate: its density is governed by the matrix quadratic form
$\Psi^{-1/2}X\Sigma^{-1}X^{\top}\Psi^{-1/2}$ through $|I_{\nu}+n^{-1}\Psi^{-1}X\Sigma^{-1}X^{\top}|$, and it enjoys left/right symmetries that are obscured by naive vectorization; see \citet{MR614963,MR881434}, or \citet[p.~140]{MR1738933}. In particular, when $\nu,d\geq 2$, $\vecc(\mathfrak{T}_n^{\top})$ is not a multivariate $t$ distribution on $\R^{\nu d}$ with covariance matrix $\Psi\otimes\Sigma$, so one cannot simply invoke existing multivariate $t$ Stein machinery after vectorization. Proposition~\ref{prop:T.vs.normal} therefore crucially relies on our matrix Stein framework, via a Stein operator tailored to the matrix $T$ law (constructed from its matrix-valued Langevin diffusion) and compared to the matrix Ornstein--Uhlenbeck operator.
\end{remark}

\begin{remark}
While for fixed $\nu$, $d$, $\Psi$ and $\Sigma$, the $n^{-1}$ rate is optimal, the bound \eqref{tbound} appears to be sub-optimal in other regards. We believe that the dimensional dependence is sub-optimal by comparison with the multivariate case. In the $d=1$ case, setting $\Psi=I_\nu$ and $\Sigma=1$, we have the bound $d_{\mathrm{W}}(\tau_n,\gamma)\leq \!\sqrt{2/\pi}\smash{\!\sqrt{C_{n,\nu}}}\leq \!\sqrt{2/\pi}\smash{\!\sqrt{\nu(\nu+5)}}/(n-4)$, valid for $n>4$, where $C_{n,\nu}=\{\nu+2n\nu/(n-2)+n^2\nu(\nu+2)/((n-2)(n-4))\}/(n-1)^2$. The second inequality has the same leading $n^{-1}$ constant as the exact $C_{n,\nu}$-bound for fixed $\nu$, since $C_{n,\nu}\sim \nu(\nu+5)/n^2$ as $n\to\infty$. The bound $d_{\mathrm{W}}(\tau_n,\gamma)\leq \!\sqrt{2/\pi}\smash{\!\sqrt{C_{n,\nu}}}$ can be obtained by applying inequality (5.5) of \citet{mrrs23} together with the second inequality that they display in (5.7) and the Stein discrepancy calculation $S_2(t_n\,|\,\gamma)=\smash{\!\sqrt{C_{n,\nu}}}$ in the notation of \citet[Example~4.21]{mrrs23}. The Stein discrepancy $S_2(\gamma\,|\,t_n)$ was calculated in \citet[Example~4.21]{mrrs23}, but we require the reversed calculation of $S_2(t_n\,|\,\gamma)$ in order to apply their inequality (5.5) to get a Wasserstein distance bound. In comparison, when \eqref{tbound} is specialized to the multivariate $d=1$ case, with $\Psi=I_\nu$ and $\Sigma=1$, we have the bound $d_{\mathrm{W}}(\tau_n,\gamma)\leq 10\nu^2/(n\!\sqrt{1-2/n})$, meaning that, compared with the preceding multivariate bound, we pick up an additional factor of order $\nu$.

We also do not consider the numerical constant 10 to be sharp; this feature of non-sharp numerical constants seen in applications of Stein's method to multivariate distributional approximations is quite common in the literature. For the sake of a simple compact final bound that holds for all $n>2$, we did not attempt to optimize the constant, but we do note that from our proof of Proposition \ref{prop:T.vs.normal} it can be seen that, if one restricts to $n>4$, \eqref{tbound} can be replaced by $d_{\mathrm{W}}(\tau_n,\gamma) \leq \smash{(2\!\sqrt{3}+\!\sqrt{2})(\nu d)^2\|\Psi\|_2^{1/2}\|\Sigma\|_2^{1/2}/(n\!\sqrt{1-4/n})}$, where $2\!\sqrt{3}+\!\sqrt{2}\approx 4.88$. If one instead insists on retaining the denominator $\!\sqrt{1-2/n}$ from \eqref{tbound}, the same proof gives the constant $6+\!\sqrt{6}\approx 8.45$ for $n\geq 5$. To gauge the quality of the constant 10, we note that in the univariate case $\nu=d=1$ (and for simplicity $\Psi=\Sigma=1$) a slight modification of the derivation of the total variation distance bound (79) of \citet{lrs17} that employs a different regularity estimate for the solution of the standard normal Stein equation yields the bound $d_{\mathrm{W}}(\tau_n,\gamma)\leq 2\!\sqrt{2/\pi}/(n-2)\approx 1.596/(n-2)$. From our proof of the optimal $n^{-1}$ rate given in Proposition \ref{prop:T.vs.normal} in which a lower bound is derived for the Wasserstein distance $d_{\mathrm{W}}(\tau_n,\gamma)$, we know that the leading asymptotic constant in an $n^{-1}$ bound cannot be reduced below $(3\!\sqrt{e}-4)/(2\!\sqrt{2 e\pi})\approx0.1144$. A direction for future research is to develop a theory of Stein kernels for matrix distributions; a natural direction would be to attain a Wasserstein distance bound for the matrix normal approximation of the matrix $T$ distribution that improves on the bound \eqref{tbound}.
\end{remark}

\subsection{Example 3: Estimation of the parameters of a centered matrix normal distribution}\label{sec:parameter.estimation}

Estimation of the Kronecker factors in the matrix normal model is classically handled by the flip--flop maximum-likelihood estimator (MLE) of \citet{doi:10.1080/00949659908811970}, which alternates closed-form updates for $\Psi$ and $\Sigma$ under a scale constraint. Subsequent refinements of this approach bundle algorithmic least-squares/MLE improvements for Kronecker-structured covariances together with sparse or regularized formulations (graphical structure and convergence guarantees); see \citet{doi:10.1109/TSP.2007.907834}, \citet{MR2890437}, and \citet{doi:10.1109/TSP.2013.2240157}. The derivation below proceeds by generalizing the centered case ($\mu=0$) in Example~3.6 of \citet{MR4986908}, which uses Stein's method to estimate the variance of a univariate normal, to the matrix-normal setting. Within this generalization, testing the Stein identity with scalar quadratic probes indexed by matrix weights yields moment equations that both recover Dutilleul's flip--flop equations and, more broadly, generate a flexible class of weighted flip--flop Stein estimators.

\begin{remark}[Why the Kronecker structure matters here]\label{rem:app3.matrix.necessary}
If we set $\bb{X}=\vecc(\mathfrak{X}^{\top})$ and write $\Omega=\Psi\otimes\Sigma$, then $\bb{X}\sim\mathcal{N}_{\nu d}(\bb{0}_{\nu d},\Omega)$. For the \emph{unconstrained} multivariate normal model (i.e., without the Kronecker covariance structure), the quadratic Stein identity tested with $f_A(\bb{X}) = (1/2) \bb{X}^{\top} A \bb{X}$ (for $A\in\mathcal{S}^{\nu d}$) yields moment equations of the form $\EE[\bb{X}^{\top}A \bb{X}] = \tr(A\Omega)$, which, after replacing $\EE$ by the empirical average, produces an estimator of the full covariance matrix $\Omega$. However, this does \emph{not} address the separable estimation problem: $\Omega$ must equal $\Psi\otimes\Sigma$ and the pair $(\Psi,\Sigma)$ is identifiable only up to the rescaling $(\Psi,\Sigma)\mapsto(c\Psi,c^{-1}\Sigma)$. In this sense, the Kronecker-aware matrix formulation is not merely a notational convenience: it keeps the method-of-moments Stein equations on $\mathcal{S}^{\nu}\times\mathcal{S}^{d}$ rather than on $\mathcal{S}^{\nu d}$.
\end{remark}

Let $\smash{\Psi\in \mathcal{S}_{++}^{\nu}}$ and $\smash{\Sigma\in \mathcal{S}_{++}^d}$. Given a random sample $\smash{\mathfrak{X}^{(1)},\ldots,\mathfrak{X}^{(n)} \stackrel{\mathrm{iid}}{\sim} \mathcal{N}_{\nu\times d}(0_{\nu\times d}, \Psi \otimes \Sigma)}$ and any function $g: \R^{\nu\times d}\to \R$, define the averaging operator
\[
\overline{g(\mathfrak{X})} = \frac{1}{n} \sum_{k=1}^n g(\mathfrak{X}^{(k)}).
\]
We apply a method of moments to the Stein characterization in Corollary~\ref{cor:Stein.normal.step.1} by replacing the expectation with the empirical average and imposing the resulting empirical Stein equations for selected probe functions $f\in C_{\mathcal{A}^{\mathrm{OU}}}^2(\R^{\nu\times d})$; namely,
\begin{equation}\label{eq:scalar.identity}
\overline{\mathcal{A}^{\mathrm{OU}} f(\mathfrak{X})}
= - \overline{\tr\{\mathfrak{X}^{\top} \nabla f(\mathfrak{X})\}} + \overline{\tr\{\Sigma \nabla^{\top} \Psi \nabla f(\mathfrak{X})\}} = 0.
\end{equation}

Introduce the left--right weighted quadratic probes; for symmetric $W\in \mathcal{S}^{\nu}$ and $U\in \mathcal{S}^{d}$, set
\begin{equation}\label{eq:weighted.probe}
f_{W,U}(X) = \frac{1}{2} \tr(X^{\top}\, \Psi^{-1/2} W \Psi^{-1/2}\, X \, U).
\end{equation}
The geometry of \eqref{eq:weighted.probe} is simple. Write heuristically $\mathfrak{X}=\Psi^{1/2}\mathfrak{Z}\Sigma^{1/2}$ with $\mathfrak{Z}$ having independent standard normal entries. The factor $\Psi^{-1/2}$ whitens the row covariance, so $W$ selects directions in the whitened row space, while $U$ tests directions in the column space. For example, if $W=vv^{\top}$, then $\mathfrak{X}^{\top}\Psi^{-1/2}W\Psi^{-1/2}\mathfrak{X}$ is the column scatter matrix formed from the single whitened row contrast $v^{\top}\Psi^{-1/2}\mathfrak{X}$. The sample average of this scatter estimates $\Sigma$, up to the normalization $\tr(W)$. Exchanging rows and columns gives the analogous estimate of $\Psi$.

A direct calculation shows that $\nabla f_{W,U}(X) = \Psi^{-1/2} W \Psi^{-1/2} \, X \, U$ and
\[
\frac{\partial^2 f_{W,U}(X)}{\partial X_{ij}\,\partial X_{i'j'}} = (\Psi^{-1/2} W \Psi^{-1/2})_{ii'}\, U_{j'j}.
\]
Hence
\[
\tr\{\Sigma \nabla^{\top}\Psi \nabla f_{W,U}(X)\}
= \sum_{i,i',j,j'} \Sigma_{jj'} \Psi_{ii'} (\Psi^{-1/2} W \Psi^{-1/2})_{ii'} U_{j'j}
= \tr(W)\, \tr(\Sigma U),
\]
and
\[
\tr\{X^{\top}\nabla f_{W,U}(X)\}
= \tr\big(X^{\top}\, \Psi^{-1/2} W \Psi^{-1/2}\, X \, U\big).
\]
Substituting $f_{W,U}$ from \eqref{eq:weighted.probe} into \eqref{eq:scalar.identity}, we obtain, for every $U\in \mathcal{S}^{d}$,
\[
\tr\left\{\left(\frac{1}{n}\sum_{k=1}^n (\mathfrak{X}^{(k)})^{\top}\, \Psi^{-1/2} W \Psi^{-1/2}\, \mathfrak{X}^{(k)}\right) U\right\}
= \tr(W)\, \tr(\Sigma U).
\]
Since $U$ ranges over $\mathcal{S}^{d}$, trace duality yields the operator identity
\begin{equation}\label{eq:operator.identity.Sigma}
\frac{1}{n}\sum_{k=1}^n (\mathfrak{X}^{(k)})^{\top}\, \Psi^{-1/2} W \Psi^{-1/2}\, \mathfrak{X}^{(k)} = \tr(W)\, \Sigma.
\end{equation}
The row-side identity is analogous. Using the probes
\[
\widetilde{f}_{W,U}(X) = \frac{1}{2} \tr\big(X \, \Sigma^{-1/2} U \Sigma^{-1/2} \, X^{\top} W\big),
\]
the same steps lead to
\begin{equation}\label{eq:operator.identity.Psi}
\frac{1}{n}\sum_{k=1}^n \mathfrak{X}^{(k)} \, \Sigma^{-1/2} U \Sigma^{-1/2}\, (\mathfrak{X}^{(k)})^{\top} = \tr(U)\, \Psi.
\end{equation}
The unknown factor used for whitening one side is the factor that is held fixed while estimating the other side. This is why the resulting algorithm has the flip--flop form: a current value of $\Psi$ turns the observed matrices into column scatter matrices and hence updates $\Sigma$, while the new value of $\Sigma$ turns them into row scatter matrices and hence updates $\Psi$.

If, in addition, $W\in \mathcal{S}_+^{\nu}$ and $U\in \mathcal{S}_+^{d}$ with $\tr(W)\neq 0$ and $\tr(U)\neq 0$, then dividing \eqref{eq:operator.identity.Sigma} and \eqref{eq:operator.identity.Psi} by $\tr(W)$ and $\tr(U)$ respectively yields the following weighted flip--flop updates: starting from $\Psi^{(0)}\in \mathcal{S}_{++}^{\nu}$, for $t=0,1,2,\ldots$,
\begin{align}
\Sigma^{(t+1)}
&= \frac{1}{n\, \tr(W)} \sum_{k=1}^n (\mathfrak{X}^{(k)})^{\top}\, \big(\Psi^{(t)}\big)^{-1/2} W \big(\Psi^{(t)}\big)^{-1/2}\, \mathfrak{X}^{(k)}, \label{eq:w.Sigma.update} \\
\Psi^{(t+1)}
&= \frac{1}{n\, \tr(U)} \sum_{k=1}^n \mathfrak{X}^{(k)} \, \big(\Sigma^{(t+1)}\big)^{-1/2} U \big(\Sigma^{(t+1)}\big)^{-1/2}\, (\mathfrak{X}^{(k)})^{\top}. \label{eq:w.Psi.update}
\end{align}
This is optionally followed at each step by a rescaling to enforce an identifiability constraint (e.g., $\tr(\Sigma^{(t+1)})=d$ or $|\Sigma^{(t+1)}|=1$, with inverse scaling applied to $\Psi^{(t+1)}$).

\begin{remark}
By setting $W = I_{\nu}$ and $U = I_d$ in \eqref{eq:w.Sigma.update}--\eqref{eq:w.Psi.update}, the weighted scheme reduces to the flip--flop maximum-likelihood estimator of \citet{doi:10.1080/00949659908811970}.
\end{remark}

\begin{remark}[Positive definite iterates]\label{rem:app3.positive.definite.iterates}
The unregularized updates in \eqref{eq:w.Sigma.update} and \eqref{eq:w.Psi.update} are understood on the event that all inverse square roots are well-defined. Thus, starting from $\Psi^{(0)}\in\mathcal{S}_{++}^{\nu}$, one assumes inductively that $\Sigma^{(t+1)}\in\mathcal{S}_{++}^{d}$ before forming $\smash{\big(\Sigma^{(t+1)}\big)^{-1/2}}$, and that $\Psi^{(t+1)}\in\mathcal{S}_{++}^{\nu}$ before the next iteration. Since $W\in\mathcal{S}_{+}^{\nu}$ and $U\in\mathcal{S}_{+}^{d}$, the right-hand sides are always nonnegative definite; they are positive definite exactly under the corresponding finite-sample full-rank conditions. If such a condition fails, one may use a ridge version, replacing the two right-hand sides by the same expressions plus $\varepsilon I_d$ and $\varepsilon I_{\nu}$, respectively, with $\varepsilon>0$, followed if desired by the same identifiability rescaling. This keeps the iterates in the positive definite cones and is a finite-sample numerical regularization, not a modification of the population Stein moment equations above.
\end{remark}

The weighted formulation \eqref{eq:w.Sigma.update}--\eqref{eq:w.Psi.update} admits several immediate uses. First, choosing diagonal $W$ and $U$ allows row/column importance weighting and, after passing to the observed marginal, simple missingness masks, as the following example illustrates.

\begin{example}[Row/column missingness, $\nu=d=2$]\label{example:flip.flop.missing}
A common practical situation is that an entire row or column of each observation is systematically unavailable. For instance, suppose that for every replicate $\mathfrak{X}\in\R^{2\times 2}$, we only observe the second row (e.g., sensor~1 failed, or a subject-level feature cannot be recorded). Writing
\[
P=\mathrm{diag}(0,1), \qquad Q=I_2,
\]
the row mask $P$ reflects which rows are observed (here, only row 2) and the column mask~$Q$ reflects which columns are observed (here, all columns). Write the actually observed data as
\[
\mathfrak{Y}^{(k)} = P \mathfrak{X}^{(k)} Q, \qquad k\in \{1,\ldots,n\}.
\]
Since only the observed marginal is available, the computable moment equations use the covariance blocks of the observed rows and columns. With $(P\Psi P)^+$ and $(Q\Sigma Q)^+$ denoting Moore-Penrose inverses, they are
\[
\frac{1}{n}\sum_{k=1}^n \big(\mathfrak{Y}^{(k)}\big)^{\top} (P\Psi P)^+ \mathfrak{Y}^{(k)}
= \tr(P)\, Q\Sigma Q, \qquad
\frac{1}{n}\sum_{k=1}^n \mathfrak{Y}^{(k)} (Q\Sigma Q)^+ \big(\mathfrak{Y}^{(k)}\big)^{\top}
= \tr(Q)\, P\Psi P.
\]
Thus, in the present row-masking case, where $\tr(P)=1$ and $Q=I_2$, the natural targets are the full column scale $\Sigma$ and the identifiable row block $P\Psi P$ of the row scale, under the chosen scale convention. In particular, only the $(2,2)$ principal entry of $\Psi$ is identifiable; the off-diagonal and the first diagonal entry cannot be recovered from the data and are not estimated. Operationally, starting from any $\Psi^{(0)}\in\mathcal{S}_{++}^{2}$, the masked flip--flop updates are
\[
\Sigma^{(t+1)}
= \frac{1}{n\,\tr(P)} \sum_{k=1}^n \big(\mathfrak{Y}^{(k)}\big)^{\top} \big(P\Psi^{(t)}P\big)^+ \mathfrak{Y}^{(k)}, \qquad
P\Psi^{(t+1)}P
= \frac{1}{n\,\tr(Q)} \sum_{k=1}^n \mathfrak{Y}^{(k)} \big(Q\Sigma^{(t+1)}Q\big)^+ \big(\mathfrak{Y}^{(k)}\big)^{\top},
\]
followed by an identifiability rescaling ($\tr(Q\Sigma^{(t+1)}Q)=\tr(Q)$ and inverse scaling on $P\Psi^{(t+1)}P$). The same template handles a simple column missingness pattern by taking, for example, $Q=\mathrm{diag}(0,1)$ and $P=I_2$. In that case only the $(2,2)$ principal entry of $\Sigma$ is identifiable and we report $\smash{Q\widehat{\Sigma}Q}$, while $\Psi$ is estimated under the same scale convention.

This example clarifies the role of the weights in the missingness setting: the moment equations are written for the observed marginal, so the row mask $P$ suppresses the systematically unobserved rows and, symmetrically, the column mask $Q$ suppresses systematically unobserved columns. At the population level, the row-mask equations with $Q=I_d$ target $\Sigma$ and $P\Psi P$ under the chosen scale convention; the unobserved entries of $\Psi$ are not identified. Similarly, the column-mask equations with $P=I_{\nu}$ target $\Psi$ and $Q\Sigma Q$ under the chosen scale convention; the unobserved entries of $\Sigma$ are not identified. An \textsf{R} implementation of these masked flip--flop updates is provided in \citet{GauntOuimetRichards2026github}.
\end{example}

Second, by choosing several weight matrices $\{W_r\}_{r=1}^{R}$ and $\{U_m\}_{m=1}^{M}$ and feeding them into the flip--flop identities \eqref{eq:w.Sigma.update}--\eqref{eq:w.Psi.update}, we obtain a family of Stein-type moment equations that, with the relevant whitening factor held fixed, produce multiple unbiased estimators of $\Sigma$ and $\Psi$. These can be averaged and, when we restrict $\Sigma$ and $\Psi$ to structured linear subspaces, projected by least squares onto those subspaces, as detailed next.

\begin{proposition}[Oracle projection onto a structured subspace]\label{prop:projected.Stein}
Suppose that the true covariance factors $\Sigma\in \mathcal{S}_{++}^{d}$ and $\Psi\in \mathcal{S}_{++}^{\nu}$ admit linear representations
\[
\Sigma = \Sigma(\bb{\beta}^{\star}) = \sum_{j=1}^p \beta^{\star}_j B_j, \qquad
\Psi = \Psi(\bb{\alpha}^{\star}) = \sum_{\ell=1}^q \alpha^{\star}_{\ell} A_{\ell},
\]
for $\bb{\beta}^{\star}\in\R^p$, $\bb{\alpha}^{\star}\in\R^q$, and fixed symmetric templates $\smash{\{B_j\}_{j=1}^p} \subseteq \mathcal{S}^d$ and $\smash{\{A_{\ell}\}_{\ell=1}^q} \subseteq \mathcal{S}^{\nu}$. Let the weights $\{W_r\}_{r=1}^{R} \subseteq \mathcal{S}^{\nu}$ and $\{U_m\}_{m=1}^{M} \subseteq \mathcal{S}^{d}$ be given with $\tr(W_r)\neq 0$ and $\tr(U_m)\neq 0$. With the whitening factors held fixed at their true values, define
\[
M_r(\Psi) = \frac{1}{n\, \tr(W_r)} \sum_{k=1}^n (\mathfrak{X}^{(k)})^{\top}\, \Psi^{-1/2} W_r \Psi^{-1/2}\, \mathfrak{X}^{(k)}, \qquad
N_m(\Sigma) = \frac{1}{n\, \tr(U_m)} \sum_{k=1}^n \mathfrak{X}^{(k)} \, \Sigma^{-1/2} U_m \Sigma^{-1/2}\, (\mathfrak{X}^{(k)})^{\top}.
\]
For each $m\in \{1,\ldots,M\}$ and $r\in \{1,\ldots,R\}$, form the structured Stein moment equations
\begin{equation}\label{eq:structured.Sigma.moments}
\tr\{\Sigma(\bb{\beta})\, U_m\} = \frac{1}{R} \sum_{r=1}^{R} \tr\{M_r(\Psi)\, U_m\} \equiv y^{(\Sigma)}_m, \qquad
\tr\{\Psi(\bb{\alpha})\, W_r\} = \frac{1}{M} \sum_{m=1}^{M} \tr\{N_m(\Sigma)\, W_r\} \equiv y^{(\Psi)}_r.
\end{equation}
Here $\smash{\bb{y}^{(\Sigma)}=(y^{(\Sigma)}_1,\ldots,y^{(\Sigma)}_M)^{\top}}$ and $\smash{\bb{y}^{(\Psi)}=(y^{(\Psi)}_1,\ldots,y^{(\Psi)}_R)^{\top}}$.
Let $\smash{C^{(\Sigma)} \in \R^{M\times p}}$ and $\smash{C^{(\Psi)} \in \R^{R\times q}}$ be the design matrices with entries
\[
C^{(\Sigma)}_{mj}=\tr(B_j U_m), \qquad C^{(\Psi)}_{r\ell}=\tr(A_{\ell} W_r).
\]
If $\smash{C^{(\Sigma)}}$ and $\smash{C^{(\Psi)}}$ have full column rank, then the least-squares solutions
\begin{equation}\label{eq:structured.Stein.least.squares}
\widehat{\bb{\beta}} = \arg\min_{\bb{\beta}\in\R^p} \big\|C^{(\Sigma)} \bb{\beta} - \bb{y}^{(\Sigma)}\big\|_2^2, \qquad
\widehat{\bb{\alpha}} = \arg\min_{\bb{\alpha}\in\R^q} \big\|C^{(\Psi)} \bb{\alpha} - \bb{y}^{(\Psi)}\big\|_2^2,
\end{equation}
are strongly consistent for the true coefficients $\bb{\beta}^{\star}$ and $\bb{\alpha}^{\star}$, as $n\to\infty$. In particular, the oracle structured estimates $\smash{\widehat{\Sigma}=\Sigma(\widehat{\bb{\beta}})}$ and $\smash{\widehat{\Psi}=\Psi(\widehat{\bb{\alpha}})}$ converge almost surely to $\Sigma$ and $\Psi$, and they solve the projected Stein moment equations \eqref{eq:structured.Sigma.moments} in the least-squares sense.
\end{proposition}

\begin{remark}[Intuition behind Proposition~\ref{prop:projected.Stein}]
The first display in Proposition~\ref{prop:projected.Stein} contains the structural assumption. Instead of estimating the $d(d+1)/2$ free entries of $\Sigma$ and the $\nu(\nu+1)/2$ free entries of $\Psi$ separately, we assume that these matrices lie in two prescribed linear spaces, namely $\mathrm{span}\{B_1,\ldots,B_p\}\subseteq\mathcal{S}^d$ and $\mathrm{span}\{A_1,\ldots,A_q\}\subseteq\mathcal{S}^{\nu}$. The unknowns are therefore the coefficient vectors $\bb{\beta}^{\star}$ and $\bb{\alpha}^{\star}$, not all matrix entries. In this sense, the templates $B_j$ and $A_{\ell}$ play the same role as regressors in an ordinary linear model: they are fixed directions, and the coefficients tell us how much of each direction is present in the true covariance factors.

The matrices $U_m$ and $W_r$ are not additional covariance parameters. They are test directions, or instruments, used to read off scalar coordinates of matrices through the trace inner product. Thus $\tr\{\Sigma U_m\}$ is the coordinate of $\Sigma$ seen in the column-space direction $U_m$, while $\tr\{\Psi W_r\}$ is the coordinate of $\Psi$ seen in the row-space direction $W_r$. The same trace coordinates turn the structural representations into linear equations:
\[
\tr\{\Sigma(\bb{\beta})\, U_m\} = \sum_{j=1}^p \beta_j\, \tr(B_j U_m), \qquad
\tr\{\Psi(\bb{\alpha})\, W_r\} = \sum_{\ell=1}^q \alpha_{\ell}\, \tr(A_{\ell} W_r).
\]
This display explains why estimating structured matrices reduces to estimating the two coefficient vectors $\bb{\beta}^{\star}$ and $\bb{\alpha}^{\star}$. Once the probes $U_m$ and $W_r$ have been chosen, the left-hand sides are just linear predictions in the unknown coefficients.

The specific forms of $M_r(\Psi)$ and $N_m(\Sigma)$ come from \eqref{eq:operator.identity.Sigma} and \eqref{eq:operator.identity.Psi}, equivalently from the weighted flip--flop updates \eqref{eq:w.Sigma.update}--\eqref{eq:w.Psi.update} after replacing $W$ by $W_r$ and $U$ by $U_m$. If the row covariance factor $\Psi$ is known and we whiten the rows by setting $\mathfrak{Z}_{\mathrm{row}}=\Psi^{-1/2}\mathfrak{X}$, then $\mathfrak{Z}_{\mathrm{row}}$ has row scale $I_{\nu}$ and column scale $\Sigma$, and
\[
\EE[\mathfrak{Z}_{\mathrm{row}}^{\top} W_r \mathfrak{Z}_{\mathrm{row}}] = \tr(W_r)\,\Sigma.
\]
Replacing the expectation by the sample average and dividing by $\tr(W_r)$ gives $M_r(\Psi)$. Thus $W_r$ appears inside $M_r(\Psi)$ because $W_r$ selects how the whitened rows are contracted, and after this row-side contraction the remaining object is a $d\times d$ matrix estimating the column factor $\Sigma$. Similarly, if the column covariance factor $\Sigma$ is known and we whiten the columns by setting $\mathfrak{Z}_{\mathrm{col}}=\mathfrak{X}\Sigma^{-1/2}$, then $\mathfrak{Z}_{\mathrm{col}}$ has row scale $\Psi$ and column scale $I_d$, and
\[
\EE[\mathfrak{Z}_{\mathrm{col}} U_m \mathfrak{Z}_{\mathrm{col}}^{\top}] = \tr(U_m)\,\Psi.
\]
Replacing the expectation by the sample average and dividing by $\tr(U_m)$ gives $N_m(\Sigma)$. Thus $U_m$ appears inside $N_m(\Sigma)$ because $U_m$ selects how the whitened columns are contracted, and after this column-side contraction the remaining object is a $\nu\times\nu$ matrix estimating the row factor $\Psi$. This is the main source of the apparent crossing of indices: row probes produce estimates of the column factor, and column probes produce estimates of the row factor.

The scalars $\smash{y_m^{(\Sigma)}}$ and $\smash{y_r^{(\Psi)}}$ are the observed responses in these trace coordinates. In the $\Sigma$-equation in \eqref{eq:structured.Sigma.moments}, the left-hand side $\tr\{\Sigma(\bb{\beta})\, U_m\}$ is the model prediction for the $m$th coordinate of the structured column factor, while the right-hand side is the average of several data-based estimates of the same coordinate:
\[
y_m^{(\Sigma)} = \frac{1}{R}\sum_{r=1}^R \tr\{M_r(\Psi)\, U_m\}.
\]
For fixed $m$, each term $\tr\{M_r(\Psi)\, U_m\}$ estimates $\tr\{\Sigma U_m\}$, but it does so through a different row instrument $W_r$. Averaging over $r$ therefore pools $R$ empirical Stein moments with the same population target. In the $\Psi$-equation in \eqref{eq:structured.Sigma.moments}, the roles are interchanged:
\[
y_r^{(\Psi)} = \frac{1}{M}\sum_{m=1}^M \tr\{N_m(\Sigma)\, W_r\}.
\]
For fixed $r$, each term $\tr\{N_m(\Sigma)\, W_r\}$ estimates $\tr\{\Psi W_r\}$, but it does so through a different column instrument $U_m$. Averaging over $m$ pools $M$ empirical Stein moments with the same population target. The averages over $k=1,\ldots,n$ inside $M_r(\Psi)$ and $N_m(\Sigma)$ are the ordinary sample averages over the observed matrices $\mathfrak{X}^{(k)}$.

The design matrices $\smash{C^{(\Sigma)}}$ and $\smash{C^{(\Psi)}}$ collect the deterministic parts of these two regressions. The entry in row $m$ and column $j$ of $\smash{C^{(\Sigma)}}$ is the trace inner product $\tr(B_jU_m)$; it tells how much the $j$th template for $\Sigma$ contributes to the $m$th observed coordinate. The entry in row $r$ and column $\ell$ of $\smash{C^{(\Psi)}}$ is $\tr(A_{\ell}W_r)$, with the analogous interpretation for $\Psi$. Hence $\smash{C^{(\Sigma)}\bb{\beta}}$ is the vector of fitted trace coordinates produced by the structured candidate $\Sigma(\bb{\beta})$, and $\smash{C^{(\Psi)}\bb{\alpha}}$ is the vector of fitted trace coordinates produced by $\Psi(\bb{\alpha})$. The design matrices are coordinate maps from coefficient space into moment-coordinate space. The projection occurs when least squares replaces the observed coordinate vectors by their closest fitted coordinate vectors in the column spaces of these maps.

This gives a regression formulation at the level of moments. Set
\[
\bb{\theta}^{(\Sigma)}=(\tr\{\Sigma U_1\},\ldots,\tr\{\Sigma U_M\})^{\top}, \qquad \bb{\theta}^{(\Psi)}=(\tr\{\Psi W_1\},\ldots,\tr\{\Psi W_R\})^{\top}.
\]
At the population level one has
\[
\bb{\theta}^{(\Sigma)} = C^{(\Sigma)}\bb{\beta}^{\star}, \qquad
\bb{\theta}^{(\Psi)} = C^{(\Psi)}\bb{\alpha}^{\star},
\]
and at finite sample size one observes
\[
\bb{y}^{(\Sigma)} = C^{(\Sigma)}\bb{\beta}^{\star} + \bb{\varepsilon}^{(\Sigma)}_n, \qquad
\bb{y}^{(\Psi)} = C^{(\Psi)}\bb{\alpha}^{\star} + \bb{\varepsilon}^{(\Psi)}_n,
\]
where the error terms are centered empirical quadratic fluctuations. Thus $\smash{\bb{y}^{(\Sigma)}}$ and $\smash{\bb{y}^{(\Psi)}}$ are the response vectors, $\smash{C^{(\Sigma)}\bb{\beta}}$ and $\smash{C^{(\Psi)}\bb{\alpha}}$ are the fitted responses allowed by the structured covariance model, and $\smash{\widehat{\bb{\beta}}}$ and $\smash{\widehat{\bb{\alpha}}}$ are the least-squares coefficient estimates. If the two design matrices have full column rank, then the fitted moment vectors are
\[
C^{(\Sigma)}\widehat{\bb{\beta}} = P_{C^{(\Sigma)}}\bb{y}^{(\Sigma)}, \qquad
C^{(\Psi)}\widehat{\bb{\alpha}} = P_{C^{(\Psi)}}\bb{y}^{(\Psi)},
\]
where $\smash{P_{C^{(\Sigma)}} = C^{(\Sigma)}(\big(C^{(\Sigma)}\big)^{\top}C^{(\Sigma)})^{-1}\big(C^{(\Sigma)}\big)^{\top}}$ and $\smash{P_{C^{(\Psi)}} = C^{(\Psi)}(\big(C^{(\Psi)}\big)^{\top}C^{(\Psi)})^{-1}\big(C^{(\Psi)}\big)^{\top}}$. Thus the final calculation in \eqref{eq:structured.Stein.least.squares} is the usual normal-equations calculation from linear regression. If $M=p$ and $\smash{C^{(\Sigma)}}$ is nonsingular, the $\Sigma$ moment equations are solved exactly; if $M>p$, they are overidentified and the least-squares solution chooses the structured covariance whose trace coordinates are closest to the observed vector $\smash{\bb{y}^{(\Sigma)}}$. The same interpretation applies to $\Psi$. If $\{U_m\}_{m=1}^{M}$ and $\{W_r\}_{r=1}^{R}$ are complete orthonormal bases of $\mathcal{S}^d$ and $\mathcal{S}^{\nu}$ for the trace inner product, this coordinate projection is the usual Frobenius projection:
\[
\Sigma(\widehat{\bb{\beta}})=\arg\min_{S\in\mathrm{span}\{B_1,\ldots,B_p\}}\left\|S-\sum_{m=1}^M y_m^{(\Sigma)}U_m\right\|_F^2, \qquad
\Psi(\widehat{\bb{\alpha}})=\arg\min_{T\in\mathrm{span}\{A_1,\ldots,A_q\}}\left\|T-\sum_{r=1}^R y_r^{(\Psi)}W_r\right\|_F^2.
\]
For general probes, the same normal equations project the response vectors $\smash{\bb{y}^{(\Sigma)}}$ and $\smash{\bb{y}^{(\Psi)}}$ onto the column spaces of $\smash{C^{(\Sigma)}}$ and $\smash{C^{(\Psi)}}$ in Euclidean norm on the selected moment coordinates.

Finally, the advantage of staying with the row and column covariance factors $\Psi$ and $\Sigma$ is that the projection remains linear. If one instead vectorizes the full covariance $\Omega=\Psi\otimes\Sigma$ and imposes the same structures, then
\[
\Omega(\bb{\alpha},\bb{\beta}) = \Big(\sum_{\ell=1}^q \alpha_{\ell}A_{\ell}\Big)\otimes\Big(\sum_{j=1}^p \beta_jB_j\Big),
\]
which is bilinear in $(\bb{\alpha},\bb{\beta})$. The weighted Stein formulation avoids this by alternating between two linear factor-space regressions, followed in practice by the usual identifiability rescaling of the two covariance factors.
\end{remark}

\begin{remark}[Projected flip--flop implementation]\label{rem:projected.Stein.iteration}
Proposition~\ref{prop:projected.Stein} records the projected Stein equations with one covariance factor held fixed while the other one is updated. In practice, the vectors $\smash{\bb{y}^{(\Sigma)}}$ and $\smash{\bb{y}^{(\Psi)}}$ are therefore recomputed along a flip--flop iteration, exactly as in \eqref{eq:w.Sigma.update}--\eqref{eq:w.Psi.update}, except that each raw matrix update is followed by a least-squares projection onto the linear spaces $\mathrm{span}\{B_1,\ldots,B_p\}$ and $\mathrm{span}\{A_1,\ldots,A_q\}$. Starting from $\Psi^{(0)}\in \mathcal{S}_{++}^{\nu}$, for $t=0,1,2,\ldots$, define
\[
\begin{aligned}
M_r(\Psi^{(t)})
&= \frac{1}{n\, \tr(W_r)} \sum_{k=1}^n (\mathfrak{X}^{(k)})^{\top}\, \big(\Psi^{(t)}\big)^{-1/2} W_r \big(\Psi^{(t)}\big)^{-1/2}\, \mathfrak{X}^{(k)}, \qquad r=1,\ldots,R, \\
y^{(\Sigma,t+1)}_m
&= \frac{1}{R} \sum_{r=1}^{R}\tr\{M_r(\Psi^{(t)})\, U_m\}, \qquad m=1,\ldots,M,
\end{aligned}
\]
and set
\[
\bb{\beta}^{(t+1)} = \arg\min_{\bb{\beta}\in\R^p} \big\|C^{(\Sigma)} \bb{\beta} - \bb{y}^{(\Sigma,t+1)}\big\|_2^2, \qquad \Sigma^{(t+1)}=\Sigma(\bb{\beta}^{(t+1)}).
\]
With this new value of $\Sigma$, define
\[
\begin{aligned}
N_m(\Sigma^{(t+1)})
&= \frac{1}{n\, \tr(U_m)} \sum_{k=1}^n \mathfrak{X}^{(k)}\, \big(\Sigma^{(t+1)}\big)^{-1/2} U_m \big(\Sigma^{(t+1)}\big)^{-1/2}\, (\mathfrak{X}^{(k)})^{\top}, \qquad m=1,\ldots,M, \\
y^{(\Psi,t+1)}_r
&= \frac{1}{M} \sum_{m=1}^{M}\tr\{N_m(\Sigma^{(t+1)})\, W_r\}, \qquad r=1,\ldots,R,
\end{aligned}
\]
and set
\[
\bb{\alpha}^{(t+1)} = \arg\min_{\bb{\alpha}\in\R^q} \big\|C^{(\Psi)} \bb{\alpha} - \bb{y}^{(\Psi,t+1)}\big\|_2^2, \qquad \Psi^{(t+1)}=\Psi(\bb{\alpha}^{(t+1)}).
\]
Since $\smash{C^{(\Sigma)}}$ and $\smash{C^{(\Psi)}}$ do not depend on $t$, the least-squares steps can equivalently be written as
\[
\bb{\beta}^{(t+1)} = \big(\big(C^{(\Sigma)}\big)^{\top}C^{(\Sigma)}\big)^{-1}\big(C^{(\Sigma)}\big)^{\top}\bb{y}^{(\Sigma,t+1)}, \qquad \bb{\alpha}^{(t+1)} = \big(\big(C^{(\Psi)}\big)^{\top}C^{(\Psi)}\big)^{-1}\big(C^{(\Psi)}\big)^{\top}\bb{y}^{(\Psi,t+1)},
\]
whenever the two design matrices have full column rank. The iteration may be followed at each step by a rescaling to enforce an identifiability constraint, for instance $\tr(\Sigma^{(t+1)})=d$ or $|\Sigma^{(t+1)}|=1$, with inverse scaling applied to $\Psi^{(t+1)}$. Since the structural spaces are linear, this rescaling simply rescales the corresponding coefficient vectors. Thus the apparent circularity in \eqref{eq:structured.Sigma.moments} is resolved in the same way as for the ordinary weighted flip--flop equations: $\Psi^{(t)}$ is used to form the projected update of $\Sigma^{(t+1)}$, and $\Sigma^{(t+1)}$ is then used to form the projected update of $\Psi^{(t+1)}$.

In high-dimensional or ill-conditioned samples, the projected updates can be stabilized before the next inverse square root is taken. For example, one may replace a generic pair of updates $(\widehat{\Sigma},\widehat{\Psi})$ by
\[
\widehat{\Sigma}_{\lambda} = (1 - \lambda) \widehat{\Sigma} + \lambda \tau I_d, \qquad \widehat{\Psi}_{\lambda} = (1 - \lambda) \widehat{\Psi} + \lambda \tau I_{\nu},
\]
with $0 < \lambda < 1$ and $\tau > 0$ chosen so that $\widehat{\Sigma}_{\lambda}\in\mathcal{S}_{++}^{d}$ and $\widehat{\Psi}_{\lambda}\in\mathcal{S}_{++}^{\nu}$; see, e.g., \citet{MR2026339}. This is automatic if the raw projected updates are nonnegative definite; otherwise $\lambda$ and $\tau$ must be chosen large enough to dominate any negative eigenvalues. This ridge step is a numerical stabilization of the whitening matrices, and it preserves the chosen structural space whenever $I_d\in\mathrm{span}\{B_1,\ldots,B_p\}$ and $I_{\nu}\in\mathrm{span}\{A_1,\ldots,A_q\}$. Alternatively, one may use penalized formulations on the precision factors $\Sigma^{-1}$ and $\Psi^{-1}$, for instance with $\ell_1$ penalties, to obtain positive definite row and column precision estimates; see \citet{MR2890437,doi:10.1109/TSP.2013.2240157}.
\end{remark}

\section{Proofs}\label{sec:proofs}

\begin{proof}[\bf Proof of Proposition~\ref{prop:OU.process.generator}]\pdfbookmark[2]{Proof of Proposition \ref{prop:OU.process.generator}}{proof:OU.process.generator}
For $f\in C^2(\R^{\nu\times d})$, the extended generator of $(\mathfrak{X}_t)_{t\geq 0}$ arises from It\^o's formula as
\[
\begin{aligned}
\mathcal{A}^{\mathrm{OU}}f(X)
&= -\tr\{X^{\top} \nabla f(X)\} + \frac{1}{2} \sum_{i,k=1}^{\nu} \sum_{j,\ell=1}^d \frac{(\rd \mathfrak{X}_t^{(\mathrm{diff})})_{ij} \bullet (\rd \mathfrak{X}_t^{(\mathrm{diff})})_{k\ell}}{\rd t} \nabla_{ij} \nabla_{k\ell} f(X),
\end{aligned}
\]
where $\bullet$ denotes the It\^o product, and
\[
\rd \mathfrak{X}_t^{(\mathrm{diff})} = \!\sqrt{2} \, \Psi^{1/2} \rd \mathfrak{B}_t \Sigma^{1/2}.
\]
Using the fact that $(\rd \mathfrak{B}_t)_{pq} \bullet (\rd \mathfrak{B}_t)_{rs} = \delta_{pr} \delta_{qs} \rd t$, we have
\[
\begin{aligned}
(\rd \mathfrak{X}_t^{(\mathrm{diff})})_{ij} \bullet (\rd \mathfrak{X}_t^{(\mathrm{diff})})_{k\ell}
&= 2 \, \Bigg\{\sum_{p=1}^{\nu} \sum_{q=1}^d (\Psi^{1/2})_{ip} (\rd \mathfrak{B}_t)_{pq} (\Sigma^{1/2})_{qj}\Bigg\} \bullet \Bigg\{\sum_{r=1}^{\nu} \sum_{s=1}^d (\Psi^{1/2})_{kr} (\rd \mathfrak{B}_t)_{rs} (\Sigma^{1/2})_{s\ell}\Bigg\} \\
&= 2 \sum_{p,r=1}^{\nu} \sum_{q,s=1}^d (\Psi^{1/2})_{ip} (\Sigma^{1/2})_{qj} (\Psi^{1/2})_{kr} (\Sigma^{1/2})_{s\ell} \, \{(\rd \mathfrak{B}_t)_{pq} \bullet (\rd \mathfrak{B}_t)_{rs}\} \\
&= 2 \sum_{p=1}^{\nu} \sum_{q=1}^d (\Psi^{1/2})_{ip} (\Psi^{1/2})_{pk} (\Sigma^{1/2})_{jq} (\Sigma^{1/2})_{q\ell} \, \rd t \\
&= 2 \, \Psi_{ik} \Sigma_{j\ell} \, \rd t.
\end{aligned}
\]
Hence, the diffusion part of $\mathcal{A}^{\mathrm{OU}}f(X)$ becomes
\[
\sum_{i,k=1}^{\nu} \sum_{j,\ell=1}^d \Psi_{ik} \Sigma_{j\ell} \nabla_{ij} \nabla_{k\ell} f(X)
= \sum_{j,\ell=1}^d \Sigma_{j\ell} (\nabla^{\top} \Psi \nabla f(X))_{\ell j}
= \tr\{\Sigma \nabla^{\top} \Psi \nabla f(X)\}.
\]
Consequently, the extended generator of $(\mathfrak{X}_t)_{t\geq 0}$ is
\[
\mathcal{A}^{\mathrm{OU}}f(X) = -\tr\{X^{\top} \nabla f(X)\} + \tr\{\Sigma \nabla^{\top} \Psi \nabla f(X)\},
\]
as claimed. This concludes the proof.
\end{proof}

\begin{proof}[\bf Proof of Proposition~\ref{prop:OU.process.distribution}]\pdfbookmark[2]{Proof of Proposition \ref{prop:OU.process.distribution}}{proof:OU.process.distribution}
The arguments here closely follow those in the proofs of Theorems 3.47--3.50 of \citet{arXiv:1201.3256}.

The proof is divided into three steps: Step~1 derives an explicit expression for the unique strong solution of the matrix Ornstein--Uhlenbeck SDE in \eqref{eq:OU.process}; Step~2 obtains the distribution of $\mathfrak{X}_t \mid \{\mathfrak{X}_0 = X_0\}$; and Step~3 takes the limit as $t \to \infty$ to identify the stationary limiting distribution.

{\bf Step~1:} We show below that the unique strong solution of the matrix Ornstein--Uhlenbeck SDE in \eqref{eq:OU.process} is
\[
\mathfrak{X}_t = e^{-t} X_0 + \!\sqrt{2} e^{-t} \int_0^t e^s \Psi^{1/2} \rd \mathfrak{B}_s \Sigma^{1/2}, \qquad \mathfrak{X}_0 = X_0.
\]
Indeed, using the product rule for matrix-variate It\^o processes \citep[see, e.g.,][Lemma~5.11]{MR2353270}, and
\[
\rd \left(\!\sqrt{2} e^{-t}\right) \bullet \rd \left(\int_0^t e^s \Psi^{1/2} \rd \mathfrak{B}_s \Sigma^{1/2}\right) = 0_{\nu\times d},
\]
we have
\[
\begin{aligned}
\rd \mathfrak{X}_t
&= - e^{-t} X_0 \rd t + \rd \big(\!\sqrt{2} e^{-t}\big) \int_0^t e^s \Psi^{1/2} \rd \mathfrak{B}_s \Sigma^{1/2} + \!\sqrt{2} e^{-t} \rd \left(\int_0^t e^s \Psi^{1/2} \rd \mathfrak{B}_s \Sigma^{1/2}\right) \\[-0.5mm]
&= - e^{-t} X_0 \rd t + (e^{-t} X_0 - \mathfrak{X}_t) \rd t + \!\sqrt{2} \Psi^{1/2} \rd \mathfrak{B}_t \Sigma^{1/2} \\[1mm]
&= - \mathfrak{X}_t \rd t + \!\sqrt{2} \Psi^{1/2} \rd \mathfrak{B}_t \Sigma^{1/2}.
\end{aligned}
\]
Since the drift $X\mapsto -X$ is globally Lipschitz and the diffusion coefficient is constant, standard finite-dimensional SDE theory gives pathwise uniqueness, and hence the displayed strong solution is unique. This completes Step~1.

{\bf Step~2:} Using the well-known identity $\vecc(ABC) = (C^{\top} \otimes A) \vecc(B)$ \citep[e.g.,][Eq.~(2.13)]{MR640865}, note that
\begin{equation}\label{eq:step.2.begin}
\vecc(\mathfrak{X}_t^{\top}) = e^{-t} \vecc(X_0^{\top}) + \!\sqrt{2} e^{-t} \int_0^t e^s (\Psi^{1/2} \otimes \Sigma^{1/2}) \rd \vecc(\mathfrak{B}_s^{\top}).
\end{equation}
Since the components of $(\mathfrak{B}_t)_{t\geq 0}$ are all independent, conditionally on $\{\mathfrak{X}_0=X_0\}$, the right-hand side of \eqref{eq:step.2.begin} is a correlated Gaussian random vector. Furthermore, we clearly have
\[
\EE[\vecc(\mathfrak{X}_t^{\top})\mid \mathfrak{X}_0=X_0] = e^{-t} \vecc(X_0^{\top}),
\]
and, using It\^o's isometry and the Kronecker product identity $(A \otimes B)(C \otimes D) = AC \otimes BD$ \citep[e.g.,][Eq.~(2.11)]{MR640865}, we have
\[
\Var\{\vecc(\mathfrak{X}_t^{\top})\mid \mathfrak{X}_0=X_0\}
= 2 e^{-2t} \int_0^t e^{2s} (\Psi^{1/2} \otimes \Sigma^{1/2}) (\Psi^{1/2} \otimes \Sigma^{1/2})^{\top} \rd s
= 2 e^{-2t} \int_0^t e^{2s} \rd s \, (\Psi \otimes \Sigma)
= (1 - e^{-2t}) (\Psi \otimes \Sigma).
\]
Hence, we find that $\mathfrak{X}_t\mid \{\mathfrak{X}_0=X_0\} \sim \mathcal{N}_{\nu\times d}(e^{-t} X_0, (1 - e^{-2t}) (\Psi \otimes \Sigma))$.

{\bf Step~3:} For any vector $\bb{u} \in \R^{\nu d}$, the characteristic function of $\vecc(\mathfrak{X}_t^{\top})$ is
\[
\begin{aligned}
\widehat{\phi}_t(\bb{u})
= \EE\left[\exp\{\mathrm{i}\,\bb{u}^{\top}\vecc(\mathfrak{X}_t^{\top})\}\right]
= \exp\left\{\mathrm{i}\,\bb{u}^{\top} e^{-t}\vecc(X_0^{\top}) - \frac{1}{2} (1 - e^{-2t})\,\bb{u}^{\top}(\Psi \otimes \Sigma)\bb{u}\right\}.
\end{aligned}
\]
Letting $t \to \infty$ gives
\[
\lim_{t \to \infty} \widehat{\phi}_t(\bb{u}) = \exp\left\{-\frac{1}{2} \, \bb{u}^{\top}(\Psi \otimes \Sigma)\bb{u}\right\}, \qquad \bb{u} \in \R^{\nu d},
\]
which is the characteristic function of $\mathcal{N}_{\nu d}(\bb{0}_{\nu d},\Psi \otimes \Sigma)$. By L\'evy's continuity theorem,
\[
\vecc(\mathfrak{X}_t^{\top}) \rightsquigarrow \mathcal{N}_{\nu d}(0,\Psi \otimes \Sigma), \qquad \text{as } t \to \infty,
\]
and thus $\vecc(\mathfrak{X}_{\infty}^{\top}) \sim \mathcal{N}_{\nu d}(\bb{0}_{\nu d},\Psi \otimes \Sigma)$. Equivalently, $\mathfrak{X}_{\infty} \sim \mathcal{N}_{\nu\times d}(0_{\nu\times d},\Psi \otimes \Sigma)$. This concludes the proof.
\end{proof}

\begin{proof}[\bf Proof of Theorem~\ref{thm:Stein.solutions}]\pdfbookmark[2]{Proof of Theorem \ref{thm:Stein.solutions}}{proof:Stein.solutions}
We adapt, in part, the proof of Proposition~6.1 of \citet{MR3980309} to our matrix setting. In particular, we replace their contraction bound (A.1) for Lipschitz test functions on $\R^d$ by the contraction bound in Lemma~\ref{lem:Walpha.contraction.OU} for $\alpha$-H\"older continuous test functions on $\R^{\nu\times d}$. The case $h\in\smash{\mathrm{Lip}_{p}(\R^{\nu\times d})}$ is treated separately at the end of the proof.

Let $\alpha\in(0,1]$ and $h\in C^{0,\alpha}(\R^{\nu\times d})$ be given. Since $|h(X)|\leq |h(0_{\nu\times d})|+[h]_{\alpha}\|X\|_F^{\alpha}$, and the matrix normal laws appearing below have finite $\alpha$-moments, all expectations involving $h(\mathfrak{X}_t)$ and $h(\mathfrak{X}_{\infty})$ below are finite. Throughout the proof, we write
\[
\widehat{h}(X)=\EE[h(\mathfrak{X}_{\infty})]-h(X),
\]
so that the function defined in \eqref{eq:fh.def.OU.alpha} can be rewritten as
\[
f_h(X)=\int_0^{\infty} (\mathcal{P}^{\mathrm{OU}}_t \widehat{h})(X)\rd t.
\]

Fix $X\in\R^{\nu\times d}$ and let $\mu_t^X=\delta_X \mathcal{P}^{\mathrm{OU}}_t$. By definition of the $\alpha$-H\"older--Kantorovich distance on $\R^{\nu\times d}$ in \eqref{dha}, we have
\[
\big|(\mathcal{P}^{\mathrm{OU}}_t \widehat{h})(X)\big|
= \Big|\int h \rd\mu_t^X - \int h \rd\gamma\Big|
\leq [h]_{\alpha} d_{\mathrm{HK},\alpha}(\mu_t^X,\gamma).
\]
By the contraction bound in Lemma~\ref{lem:Walpha.contraction.OU}, $d_{\mathrm{HK},\alpha}(\mu_t^X,\gamma)\leq e^{-\alpha t} d_{\mathrm{HK},\alpha}(\delta_X,\gamma)$, and thus
\begin{equation}\label{eq:step.1.bound}
\big|(\mathcal{P}^{\mathrm{OU}}_t \widehat{h})(X)\big| \leq [h]_{\alpha} e^{-\alpha t} d_{\mathrm{HK},\alpha}(\delta_X,\gamma).
\end{equation}
Integrating \eqref{eq:step.1.bound} over $t\in(0,\infty)$ yields
\[
|f_h(X)|
\leq \int_0^{\infty} [h]_\alpha e^{-\alpha t} d_{\mathrm{HK},\alpha}(\delta_X,\gamma) \rd t
= \alpha^{-1} [h]_{\alpha} d_{\mathrm{HK},\alpha}(\delta_X,\gamma).
\]

We now give a crude bound for $d_{\mathrm{HK},\alpha}(\delta_X,\gamma)$. Given any $g\in C^{0,\alpha}(\R^{\nu\times d})$ with $[g]_{\alpha}\leq 1$, setting $\widehat{g}(X)=\EE[g(\mathfrak{X}_{\infty})]-g(X)$, Jensen's inequality and the $\alpha$-H\"older property imply
\[
\big|\widehat{g}(X)\big|
= \big|\EE[g(\mathfrak{X}_{\infty}) - g(X)]\big|
\leq \EE\big[|g(X)-g(\mathfrak{X}_{\infty})|\big]
\leq \EE\big[\|X-\mathfrak{X}_{\infty}\|_F^{\alpha}\big].
\]
Taking the supremum over such $g$'s yields
\begin{equation}\label{eq:d.H.alpha.bound}
d_{\mathrm{HK},\alpha}(\delta_X,\gamma)\leq \EE\big[\|X-\mathfrak{X}_{\infty}\|_F^{\alpha}\big] \leq \|X\|_F^{\alpha}+\EE\big[\|\mathfrak{X}_{\infty}\|_F^{\alpha}\big],
\end{equation}
where we used the elementary inequality $(u+v)^{\alpha}\leq u^{\alpha}+v^{\alpha}, ~u,v\geq 0, ~\alpha\in (0,1]$. Hence, by combining \eqref{eq:step.1.bound} and \eqref{eq:d.H.alpha.bound}, we have
\begin{equation}\label{eq:P.t.h.hat.bound}
\big|(\mathcal{P}^{\mathrm{OU}}_t \widehat{h})(X)\big| \leq [h]_{\alpha} e^{-\alpha t} \{\|X\|_F^{\alpha}+\EE\big[\|\mathfrak{X}_{\infty}\|_F^{\alpha}\big]\}.
\end{equation}
In particular, this bound shows that $f_h$ is well defined pointwise, and
\[
|f_h(X)|
\leq \alpha^{-1} [h]_{\alpha} d_{\mathrm{HK},\alpha}(\delta_{X}, \gamma)
\leq \alpha^{-1} [h]_{\alpha} \Big\{\EE\big[\|\mathfrak{X}_{\infty}\|_F^{\alpha}\big] + \|X\|_F^{\alpha}\Big\} < \infty,
\]
proving \eqref{eq:fh.bound.OU.alpha}.

We now verify that $f_h$ belongs to the pointwise domain of $\mathcal{A}^{\mathrm{OU}}$ and satisfies $\smash{\mathcal{A}^{\mathrm{OU}} f_h = -\widehat{h}}$. Fix $s>0$ and $X\in\R^{\nu\times d}$. By the Markov property, the semigroup property, and Fubini's theorem (\eqref{eq:P.t.h.hat.bound} implies $\smash{\int_0^{\infty} \EE[|(\mathcal{P}_t^{\mathrm{OU}}\widehat{h})(\mathfrak{X}_s)| \mid \mathfrak{X}_0=X] \, \rd t < \infty}$ since $\mathfrak{X}_s \mid \{\mathfrak{X}_0=X\}$ is matrix normal by Proposition~\ref{prop:OU.process.distribution}), we have
\[
(\mathcal{P}_s^{\mathrm{OU}} f_h)(X)
= \EE\left[\int_0^{\infty} (\mathcal{P}_t^{\mathrm{OU}}\widehat{h})(\mathfrak{X}_s) \rd t \mid \mathfrak{X}_0=X\right]
= \int_0^{\infty} (\mathcal{P}_{t+s}^{\mathrm{OU}}\widehat{h})(X) \rd t.
\]
Changing variables $u=t+s$ gives
\[
(\mathcal{P}_s^{\mathrm{OU}} f_h)(X)
= \int_s^{\infty} (\mathcal{P}_u^{\mathrm{OU}}\widehat{h})(X) \rd u
= f_h(X) - \int_0^s (\mathcal{P}_u^{\mathrm{OU}}\widehat{h})(X) \rd u.
\]
Therefore,
\begin{equation}\label{eq:continuity}
\frac{(\mathcal{P}_s^{\mathrm{OU}} f_h)(X)-f_h(X)}{s}
= -\frac{1}{s} \int_0^s (\mathcal{P}_u^{\mathrm{OU}}\widehat{h})(X) \rd u.
\end{equation}
The map $u\mapsto (\mathcal{P}_u^{\mathrm{OU}}\widehat{h})(X)$ is continuous at $u=0$. Indeed, if $\mathfrak{X}_u^X$ denotes the process started from $X$, then Proposition~\ref{prop:OU.process.distribution} gives $\mathfrak{X}_u^X\to X$ in $L^\alpha$ as $u\downarrow 0$, and
\[
\big|(\mathcal{P}_u^{\mathrm{OU}}\widehat{h})(X)-\widehat{h}(X)\big|
\leq [h]_\alpha \EE\big[\|\mathfrak{X}_u^X-X\|_F^\alpha\big]\to 0,
\]
where $\smash{(\mathcal{P}_0^{\mathrm{OU}}\widehat{h})(X)=\widehat{h}(X)}$. Therefore, letting $s\downarrow 0$ in \eqref{eq:continuity} yields
\begin{equation}\label{eq:again}
\begin{aligned}
\mathcal{A}^{\mathrm{OU}} f_h(X)
= \lim_{s\downarrow 0} \, -\frac{1}{s} \int_0^s (\mathcal{P}_u^{\mathrm{OU}}\widehat{h})(X) \rd u
= -\widehat{h}(X)
= h(X) - \EE[h(\mathfrak{X}_{\infty})],
\end{aligned}
\end{equation}
which is precisely \eqref{eq:Stein.equation.normal}. This proves the asserted statement and bound in the case $h\in C^{0,\alpha}(\R^{\nu\times d})$.

We now prove the asserted statement for test functions in $\smash{\mathrm{Lip}_{p}(\R^{\nu\times d})}$. Let $p\geq 0$ and $h\in\smash{\mathrm{Lip}_{p}(\R^{\nu\times d})}$ be given. We first record a simple consequence of the definition of $\smash{\mathrm{Lip}_{p}(\R^{\nu\times d})}$: there exists a finite constant $C_h$ such that, for all $A,B\in\R^{\nu\times d}$,
\begin{equation}\label{eq:Lip.p.increment.bound}
|h(A) - h(B)| \leq C_h \, \big(1 + \|A\|_F^p + \|B\|_F^p\big) \, \|A - B\|_F.
\end{equation}
For $p=0$, this is just the Lipschitz property of $h$. For $p\geq 1$, the Lipschitz property of the order-$p$ partial derivatives implies that these derivatives have at most linear growth. Repeatedly integrating lower-order partial derivatives along coordinate line segments shows that $\|\nabla h(Y)\|_F\leq C_h(1 + \|Y\|_F^p)$, possibly after increasing $C_h$. Applying the fundamental theorem of calculus to $r\mapsto h(B + r(A - B))$ gives \eqref{eq:Lip.p.increment.bound}. In particular, $h$ has at most polynomial growth of degree $p+1$, and hence $\EE[|h(\mathfrak{X}_{\infty})|]<\infty$. We again set $\smash{\widehat{h}(X)=\EE[h(\mathfrak{X}_{\infty})]-h(X)}$.

Let $\mathfrak{X}_{\infty}\sim \mathcal{N}_{\nu\times d}(0_{\nu\times d}, \Psi\otimes \Sigma)$, and fix $X\in\R^{\nu\times d}$. For $t\geq 0$, define the coupling
\[
\mathfrak{N}_t^X = e^{-t} X + \!\sqrt{1 - e^{-2t}} \, \mathfrak{X}_{\infty}.
\]
Then $\mathfrak{N}_t^X\sim\delta_X \mathcal{P}^{\mathrm{OU}}_t$, and therefore
\[
(\mathcal{P}^{\mathrm{OU}}_t\widehat{h})(X)
= \EE[h(\mathfrak{X}_{\infty})] - \EE[h(\mathfrak{N}_t^X)]
= \EE[h(\mathfrak{X}_{\infty}) - h(\mathfrak{N}_t^X)].
\]
Moreover,
\[
\|\mathfrak{N}_t^X - \mathfrak{X}_{\infty}\|_F
\leq e^{-t} \|X\|_F + \big(1 - \!\sqrt{1 - e^{-2t}}\big)\|\mathfrak{X}_{\infty}\|_F
\leq e^{-t}(\|X\|_F + \|\mathfrak{X}_{\infty}\|_F),
\]
because $1 - \!\sqrt{1 - e^{-2t}} = e^{-2t}/(1 + \!\sqrt{1 - e^{-2t}}) \leq e^{-2t} \leq e^{-t}$. Using \eqref{eq:Lip.p.increment.bound} and the bound $\|\mathfrak{N}_t^X\|_F\leq \|X\|_F + \|\mathfrak{X}_{\infty}\|_F$, we obtain
\[
\begin{aligned}
\big|(\mathcal{P}^{\mathrm{OU}}_t\widehat{h})(X)\big|
&\leq C_h\, \EE\Big[\big(1 + \|\mathfrak{X}_{\infty}\|_F^p + \|\mathfrak{N}_t^X\|_F^p\big) \, \|\mathfrak{X}_{\infty} - \mathfrak{N}_t^X\|_F\Big] \\
&\leq C_h e^{-t} \, \EE\Big[\big(1 + \|\mathfrak{X}_{\infty}\|_F^p + (\|X\|_F + \|\mathfrak{X}_{\infty}\|_F)^p\big) \, (\|X\|_F + \|\mathfrak{X}_{\infty}\|_F)\Big].
\end{aligned}
\]
The expectation on the right-hand side is finite and independent of $t$. Thus $\smash{\int_0^{\infty} |(\mathcal{P}^{\mathrm{OU}}_t\widehat{h})(X)| \rd t < \infty}$, which proves that the integral in \eqref{eq:fh.def.OU.alpha} is absolutely convergent for every fixed $X\in\R^{\nu\times d}$.

It remains only to verify the Stein identity. The preceding estimate, with $X$ replaced by an arbitrary $Y$, implies that there exists a finite constant $C$ such that, for all $Y\in\R^{\nu\times d}$ and $t\geq 0$,
\begin{equation}\label{eq:Lip.p.semigroup.bound}
\big|(\mathcal{P}^{\mathrm{OU}}_t\widehat{h})(Y)\big|
\leq C e^{-t}\big(1 + \|Y\|_F^{p+1}\big).
\end{equation}
Since $\mathfrak{X}_s\mid\{\mathfrak{X}_0=X\}$ has finite moments of all orders, \eqref{eq:Lip.p.semigroup.bound} justifies the same Fubini and semigroup argument used above. Hence, for every $s>0$, \eqref{eq:continuity} holds as before. Moreover, if $\mathfrak{X}_u^X$ denotes the process started from $X$, then $\mathfrak{X}_u^X\to X$ in $L^{p+1}$ as $u\downarrow 0$. By \eqref{eq:Lip.p.increment.bound} and H\"older's inequality,
\[
\begin{aligned}
\big|(\mathcal{P}_u^{\mathrm{OU}}\widehat{h})(X)-\widehat{h}(X)\big|
&\leq \EE\big[|h(\mathfrak{X}_u^X)-h(X)|\big] \\
&\leq C_h \Big(\EE\big[\big(1+\|\mathfrak{X}_u^X\|_F^p+\|X\|_F^p\big)^{(p+1)/p}\big]\Big)^{p/(p+1)} \Big(\EE\big[\|\mathfrak{X}_u^X-X\|_F^{p+1}\big]\Big)^{1/(p+1)} \to 0, \qquad u\downarrow 0.
\end{aligned}
\]
Consequently, \eqref{eq:again} holds as before, which is the matrix normal Stein equation \eqref{eq:Stein.equation.normal}. This proves the asserted statement in the case $h\in\smash{\mathrm{Lip}_{p}(\R^{\nu\times d})}$ and completes the proof.
\end{proof}

\begin{proof}[\bf Proof of Theorem~\ref{thm:smoothness.estimates}]\pdfbookmark[2]{Proof of Theorem \ref{thm:smoothness.estimates}}{proof:smoothness.estimates}
Let $m\geq 1$, and write $\overline{\nabla} \equiv \prod_{\ell=1}^m \nabla_{i_{\ell} j_{\ell}}$ for short. Assume first that $h\in \mathrm{Lip}_{m-1}(\R^{\nu\times d})$. By dominated convergence and the semigroup representation of $f_h$, we have, for all $X\in \R^{\nu\times d}$,
\[
\big|\overline{\nabla} f_h(X)\big|
= \left|\int_0^{\infty} e^{-mt} \, \EE\left[\big(\overline{\nabla} h\big)\big(e^{-t} X + \!\sqrt{1 - e^{-2t}} \, \Psi^{1/2} \mathfrak{Z} \Sigma^{1/2}\big)\right] \rd t\right|
\leq \int_0^{\infty} e^{-mt} \rd t \, \|\overline{\nabla} h\|_{\infty}
= \frac{1}{m} \|\overline{\nabla} h\|_{\infty},
\]
which proves \eqref{eq:Stein.bound.1}.

Next, assume that $h\in C^{0,\alpha}(\R^{\nu\times d})$ is bounded for some $\alpha\in(0,1]$. For $t\geq 0$, $X\in \R^{\nu\times d}$, and $Z\in \R^{\nu\times d}$, define
\[
\lambda_{t,X}(Z) = e^{-t} X + \!\sqrt{1 - e^{-2t}} \, \Psi^{1/2} Z \Sigma^{1/2}.
\]
Let $\phi \equiv \phi_{0_{\nu\times d}, I_{\nu}, I_d}$ denote the density of the $\mathcal{N}_{\nu\times d}(0_{\nu\times d},I_{\nu}\otimes I_d)$ distribution, namely
\[
\phi(Z) = \etr(-Z^{\top} Z / 2)/(2\pi)^{\nu d/2}.
\]
By the semigroup representation of $f_h$ in \eqref{eq:fh.def.OU.alpha}, we have
\[
f_h(X)
= - \int_0^{\infty} \left\{\int_{\R^{\nu\times d}} h(\lambda_{t,X}(Z)) \, \phi(Z) \rd Z - \EE\big[h(\Psi^{1/2}\mathfrak{Z}\Sigma^{1/2})\big]\right\} \rd t.
\]
For each fixed $t>0$, make the change of variable $Y=\lambda_{t,X}(Z)$. Then
\[
\lambda_{t,X}^{-1}(Y)
= \frac{1}{\!\sqrt{1 - e^{-2t}}} \Psi^{-1/2} Y \Sigma^{-1/2}
- \frac{e^{-t}}{\!\sqrt{1 - e^{-2t}}} \Psi^{-1/2} X \Sigma^{-1/2},
\]
and the Jacobian determinant of the transformation $Z=\lambda_{t,X}^{-1}(Y)$ is $|\Psi|^{-d/2} |\Sigma|^{-\nu/2}/(1 - e^{-2t})^{\nu d/2}$. Therefore,
\[
\int_{\R^{\nu\times d}} h(\lambda_{t,X}(Z)) \, \phi(Z) \rd Z = \int_{\R^{\nu\times d}} h(Y) \, \phi(\lambda_{t,X}^{-1}(Y))
\frac{|\Psi|^{-d/2} |\Sigma|^{-\nu/2}}{(1 - e^{-2t})^{\nu d/2}} \rd Y.
\]
Since $h$ is bounded, dominated convergence yields, for every $(i,j)\in [\nu]\times [d]$,
\[
\frac{\partial f_h(X)}{\partial X_{ij}}
= - \int_0^{\infty} \frac{\partial}{\partial X_{ij}}
\int_{\R^{\nu\times d}} h(Y) \, \phi(\lambda_{t,X}^{-1}(Y))
\frac{|\Psi|^{-d/2} |\Sigma|^{-\nu/2}}{(1 - e^{-2t})^{\nu d/2}} \rd Y \rd t.
\]
Now, only $\lambda_{t,X}^{-1}(Y)$ depends on $X$, and
\[
\frac{\partial}{\partial X_{ij}} \tr\{-\lambda_{t,X}^{-1}(Y)^{\top}\lambda_{t,X}^{-1}(Y)/2\}
= \frac{e^{-t}}{\!\sqrt{1 - e^{-2t}}} \, (\Psi^{-1/2}\lambda_{t,X}^{-1}(Y)\Sigma^{-1/2})_{ij}.
\]
It follows that
\[
\frac{\partial}{\partial X_{ij}} \phi(\lambda_{t,X}^{-1}(Y))
= \frac{e^{-t}}{\!\sqrt{1 - e^{-2t}}} \, (\Psi^{-1/2}\lambda_{t,X}^{-1}(Y)\Sigma^{-1/2})_{ij}
\, \phi(\lambda_{t,X}^{-1}(Y)),
\]
and thus
\[
\begin{aligned}
\frac{\partial f_h(X)}{\partial X_{ij}}
&= - \int_0^{\infty} \frac{e^{-t}}{\!\sqrt{1 - e^{-2t}}}
\int_{\R^{\nu\times d}} (\Psi^{-1/2}\lambda_{t,X}^{-1}(Y)\Sigma^{-1/2})_{ij} \, h(Y) \, \phi(\lambda_{t,X}^{-1}(Y))
\frac{|\Psi|^{-d/2} |\Sigma|^{-\nu/2}}{(1 - e^{-2t})^{\nu d/2}} \rd Y \rd t.
\end{aligned}
\]
Reverting the change of variable $Y=\lambda_{t,X}(Z)$ gives
\begin{equation}\label{eq:m.equal.1.case}
\frac{\partial f_h(X)}{\partial X_{ij}}
= - \int_0^{\infty} \frac{e^{-t}}{\!\sqrt{1 - e^{-2t}}} \, \EE\big[(\Psi^{-1/2}\mathfrak{Z}\Sigma^{-1/2})_{ij} \, h(\lambda_{t,X}(\mathfrak{Z}))\big] \rd t.
\end{equation}
Since $\EE[(\Psi^{-1/2}\mathfrak{Z}\Sigma^{-1/2})_{ij}]=0$, we may rewrite this as
\[
\frac{\partial f_h(X)}{\partial X_{ij}}
= - \int_0^{\infty} \frac{e^{-t}}{\!\sqrt{1 - e^{-2t}}} \, \EE\Big[(\Psi^{-1/2}\mathfrak{Z}\Sigma^{-1/2})_{ij}
\big\{h(\lambda_{t,X}(\mathfrak{Z})) - \EE[h(\Psi^{1/2}\mathfrak{Z}\Sigma^{1/2})]\big\}\Big] \rd t.
\]
Taking absolute values yields, for all $X\in \R^{\nu\times d}$,
\[
\left|\nabla_{ij} f_h(X)\right|
\leq \big\|h - \EE[h(\Psi^{1/2}\mathfrak{Z}\Sigma^{1/2})]\big\|_{\infty}
\int_0^{\infty} \frac{e^{-t}}{\!\sqrt{1 - e^{-2t}}} \rd t ~ \EE\big[|(\Psi^{-1/2}\mathfrak{Z}\Sigma^{-1/2})_{ij}|\big].
\]
Now,
\[
\int_0^{\infty} \frac{e^{-t}}{\!\sqrt{1 - e^{-2t}}} \rd t
= \frac{1}{2}\int_0^1 u^{-1/2} (1-u)^{-1/2} \rd u
= \frac{\pi}{2},
\]
and, since $(\Psi^{-1/2}\mathfrak{Z}\Sigma^{-1/2})_{ij}$ is centered normal with variance $(\Psi^{-1})_{ii}(\Sigma^{-1})_{\!j\hspace{-0.3mm}j}$,
\begin{equation}\label{eq:bound.2.end}
\EE\big[|(\Psi^{-1/2}\mathfrak{Z}\Sigma^{-1/2})_{ij}|\big]
= \!\sqrt{\frac{2}{\pi}} \!\sqrt{(\Psi^{-1})_{ii}(\Sigma^{-1})_{\!j\hspace{-0.3mm}j}}.
\end{equation}
Putting the last three equations together proves \eqref{eq:Stein.bound.2}.

We now prove \eqref{eq:Stein.bound.3}. Fix $k\in[m]$, and write $\smash{\overline{\nabla}_{-k\,}\equiv \smash{\prod_{\ell=1,\ell\neq k}^m \nabla_{i_{\ell} j_{\ell}}}}$ for short. Let $m\geq 2$ be given and assume that $\smash{h\in \mathrm{Lip}_{m-2}(\R^{\nu\times d})}$. Differentiating the semigroup representation \eqref{eq:fh.def.OU.alpha} $m-1$ times under the integral sign yields
\[
\overline{\nabla}_{-k\,} f_h(X)
= - \int_0^{\infty} e^{-(m-1)t} \, \EE\left[\overline{\nabla}_{-k\,} h\big(\lambda_{t,X}(\mathfrak{Z})\big)\right] \rd t.
\]
Applying the preceding argument that yielded \eqref{eq:m.equal.1.case} to the bounded test function $\overline{\nabla}_{-k\,} h$ gives
\[
\overline{\nabla} f_h(X)
= - \int_0^{\infty} \frac{e^{-mt}}{\!\sqrt{1 - e^{-2t}}} \, \EE\left[(\Psi^{-1/2}\mathfrak{Z}\Sigma^{-1/2})_{i_k,j_k}
\, \overline{\nabla}_{-k\,} h\big(\lambda_{t,X}(\mathfrak{Z})\big)\right] \rd t.
\]
Hence, for all $X\in \R^{\nu\times d}$,
\[
\big|\overline{\nabla} f_h(X)\big|
\leq \big\|\overline{\nabla}_{-k\,} h\big\|_{\infty}
\int_0^{\infty} \frac{e^{-mt}}{\!\sqrt{1 - e^{-2t}}} \rd t ~ \EE\big[|(\Psi^{-1/2}\mathfrak{Z}\Sigma^{-1/2})_{i_k,j_k}|\big].
\]
Taking the minimum over $k\in[m]$, we obtain
\[
\big|\overline{\nabla} f_h(X)\big|
\leq \int_0^{\infty} \frac{e^{-mt}}{\!\sqrt{1 - e^{-2t}}} \rd t \,
\min_{1\leq k \leq m} \left\{\EE\big[|(\Psi^{-1/2}\mathfrak{Z}\Sigma^{-1/2})_{i_k j_k}|\big]
\left\|\overline{\nabla}_{-k\,} h\right\|_{\infty}\right\}.
\]
Using the change of variable $u = 1 - e^{-2t}$ ($2 \rd t = (1-u)^{-1} \rd u$), note that
\begin{equation}\label{eq:Beta.integral}
\int_0^{\infty} \frac{e^{-mt}}{\!\sqrt{1 - e^{-2t}}} \rd t
= \frac{1}{2} \int_0^1 u^{-1/2} (1-u)^{m/2 - 1} \rd u
= \frac{\!\sqrt{\pi}\, \Gamma(m/2)}{2\, \Gamma(m/2+1/2)}.
\end{equation}
Moreover, as in \eqref{eq:bound.2.end}, we have
\[
\EE\big[|(\Psi^{-1/2}\mathfrak{Z}\Sigma^{-1/2})_{i_k j_k}|\big]
= \!\sqrt{\frac{2}{\pi}} \!\sqrt{(\Psi^{-1})_{i_k i_k} (\Sigma^{-1})_{j_k j_k}}.
\]
Putting the last three equations together proves \eqref{eq:Stein.bound.3}.

We now prove \eqref{eq:Stein.bound.4}. Assume that $h\in C_b^m(\R^{\nu\times d})$ for some $m\geq 1$. Recall the semigroup representation
\[
f_h(X)
= - \int_0^{\infty} \left\{\EE\big[h\big(\lambda_{t,X}(\mathfrak{Z})\big)\big] - \EE\big[h(\Psi^{1/2}\mathfrak{Z}\Sigma^{1/2})\big]\right\} \rd t,
\]
where $\lambda_{t,X}(Z)=e^{-t} X + \!\sqrt{1 - e^{-2t}} \, \Psi^{1/2} Z \Sigma^{1/2}$. Since the second expectation is constant in $X$, its Fr\'echet derivatives vanish. Fix $t > 0$ and $Z\in \R^{\nu\times d}$. By the chain rule and multilinearity of Fr\'echet derivatives, we have for all $X\in \R^{\nu\times d}$ and $U_1,\ldots,U_m\in \R^{\nu\times d}$,
\[
D^m \{h(\lambda_{t,\bigcdot}(Z))\}(X)[U_1,\ldots,U_m]
= D^m h(\lambda_{t,X}(Z))[e^{-t}U_1,\ldots,e^{-t}U_m]
= e^{-mt} D^m h(\lambda_{t,X}(Z))[U_1,\ldots,U_m].
\]
Since $h\in C_b^m(\R^{\nu\times d})$, we have $\mathcal{M}_m(h)<\infty$, and thus
\[
\left|D^m \{h(\lambda_{t,\bigcdot}(Z))\}(X)[U_1,\ldots,U_m]\right|
\leq e^{-mt} \, \mathcal{M}_m(h) \prod_{\ell=1}^m \|U_\ell\|_F.
\]
In particular, the right-hand side is integrable in $t$, so, by the dominated convergence theorem, we can differentiate under the expectation and integral signs to obtain
\[
D^m f_h(X)[U_1,\ldots,U_m]
= - \int_0^{\infty} e^{-mt} \, \EE\left[D^m h\big(\lambda_{t,X}(\mathfrak{Z})\big)[U_1,\ldots,U_m]\right] \rd t.
\]
Thus, for $\|U_1\|_F = \cdots = \|U_m\|_F = 1$,
\[
\big|D^m f_h(X)[U_1,\ldots,U_m]\big|
\leq \int_0^{\infty} e^{-mt} \rd t \, \mathcal{M}_m(h)
= \frac{1}{m}\mathcal{M}_m(h).
\]
Taking the suprema over $X\in\R^{\nu\times d}$ and over $\|U_1\|_F = \cdots = \|U_m\|_F = 1$ yields \eqref{eq:Stein.bound.4}.

We end by proving \eqref{eq:Stein.bound.6}; the proof of \eqref{eq:Stein.bound.5} under the bounded H\"older assumption is obtained by the same argument with $m=1$ if $D^0 h$ is understood as $h$.
Fix $m\geq 2$ and $k\in[m]$, and let $U_1,\ldots,U_m\in \R^{\nu\times d}$ with $\|U_1\|_F = \cdots = \|U_m\|_F = 1$.
From the semigroup representation \eqref{eq:fh.def.OU.alpha} and differentiation under the integral sign for the $m-1$ derivatives that fall on $h$, we have
\[
D^{m-1} f_h(X)[U_1,\ldots,U_{k-1},U_{k+1},\ldots,U_m]
= - \int_0^{\infty} e^{-(m-1)t}\, \EE\Big[D^{m-1} h\big(\lambda_{t,X}(\mathfrak{Z})\big)[U_1,\ldots,U_{k-1},U_{k+1},\ldots,U_m]\Big] \rd t.
\]
Now apply the preceding $m=1$ argument to the bounded test function
\[
g(Y) = D^{m-1} h(Y)[U_1,\ldots,U_{k-1},U_{k+1},\ldots,U_m], \qquad Y\in \R^{\nu\times d}.
\]
This yields
\[
D^{m} f_h(X)[U_1,\ldots,U_m]
= - \int_0^{\infty} \frac{e^{-mt}}{\!\sqrt{1 - e^{-2t}}} \, \EE\Big[\langle \Psi^{-1/2}\mathfrak{Z}\Sigma^{-1/2},U_k\rangle_F D^{m-1} h\big(\lambda_{t,X}(\mathfrak{Z})\big)[U_1,\ldots,U_{k-1},U_{k+1},\ldots,U_m]\Big] \rd t.
\]
Taking absolute values and the suprema that define $\mathcal{M}_m(f_h)$ gives
\[
\mathcal{M}_m(f_h)
\leq \int_0^{\infty} \frac{e^{-mt}}{\!\sqrt{1 - e^{-2t}}} \rd t \,
\left(\sup_{\|U\|_F=1}\EE\big[|\langle \Psi^{-1/2}\mathfrak{Z}\Sigma^{-1/2},U\rangle_F|\big]\right)
\, \mathcal{M}_{m-1}(h).
\]
Since $\langle \Psi^{-1/2}\mathfrak{Z}\Sigma^{-1/2},U\rangle_F \sim \mathcal{N}(0,\|\Psi^{-1/2}U\Sigma^{-1/2}\|_F^2)$, we have
\[
\EE\big[|\langle \Psi^{-1/2}\mathfrak{Z}\Sigma^{-1/2},U\rangle_F|\big]
= \!\sqrt{\frac{2}{\pi}}\,\|\Psi^{-1/2}U\Sigma^{-1/2}\|_F
\leq \!\sqrt{\frac{2}{\pi}}\,\|\Psi^{-1/2}\|_2 \, \|U\|_F \, \|\Sigma^{-1/2}\|_2,
\]
where the last inequality is due to \eqref{eq:Frobenius.spectral.submult}. Therefore, the supremum above is bounded by $\!\sqrt{2/\pi}\,\|\Psi^{-1/2}\|_2\,\|\Sigma^{-1/2}\|_2$. Using \eqref{eq:Beta.integral}, we conclude that
\[
\mathcal{M}_m(f_h)
\leq \frac{\Gamma(m/2)}{\!\sqrt{2}\, \Gamma(m/2 + 1/2)}\, \|\Psi^{-1/2}\|_2\, \|\Sigma^{-1/2}\|_2 \, \mathcal{M}_{m-1}(h),
\]
which is \eqref{eq:Stein.bound.6}. This completes the proof.
\end{proof}

\begin{proof}[\bf Proof of Proposition~\ref{prop:generalization.Gaunt.2013.Example.2.6}]\pdfbookmark[2]{Proof of Proposition \ref{prop:generalization.Gaunt.2013.Example.2.6}}{proof:generalization.Gaunt.2013.Example.2.6}
Recall that $\mathfrak{S} = n^{-1/2} \smash{\sum_{k=1}^n \mathfrak{X}_k}$, and define the leave-one-out version, namely
\[
\mathfrak{S}_k = \mathfrak{S} - \frac{1}{\!\sqrt{n}} \mathfrak{X}_k, \qquad k\in [n].
\]
Note that $\mathfrak{S}_k$ and $\mathfrak{X}_k$ are independent and that $\mathfrak{S} = \mathfrak{S}_k + n^{-1/2} \mathfrak{X}_k$.

Fix $f\in \mathrm{Lip}_{2}(\R^{\nu\times d})$ such that the expectations appearing below are finite. In the Taylor expansions below, the quantities
$\|\nabla_{i_3j_3}\nabla_{i_2j_2}\nabla_{i_1j_1}f\|_{\infty}$ are understood as the Lipschitz constants of the second-order partial derivatives, in the sense described before Theorem~\ref{thm:Stein.solutions}. We begin by rewriting the drift term:
\[
\EE[\tr\{\mathfrak{S}^{\top} \nabla f(\mathfrak{S})\}]
= \sum_{i_1=1}^{\nu} \sum_{j_1=1}^d \EE[\mathfrak{S}_{i_1j_1} \nabla_{i_1j_1} f(\mathfrak{S})]
= \frac{1}{\!\sqrt{n}} \sum_{k=1}^n \sum_{i_1=1}^{\nu} \sum_{j_1=1}^d \EE[(\mathfrak{X}_k)_{i_1j_1} \nabla_{i_1j_1} f(\mathfrak{S})].
\]
Since $\mathfrak{S} = \mathfrak{S}_k + n^{-1/2}\mathfrak{X}_k$, a Taylor expansion of the map $U \mapsto \nabla_{i_1j_1} f(U)$ around $\mathfrak{S}_k$ yields
\[
\EE[\tr\{\mathfrak{S}^{\top} \nabla f(\mathfrak{S})\}]
= \frac{1}{\!\sqrt{n}} \sum_{k=1}^n \sum_{i_1=1}^{\nu} \sum_{j_1=1}^d \EE[(\mathfrak{X}_k)_{i_1j_1} \nabla_{i_1j_1} f(\mathfrak{S}_k)] + \frac{1}{n} \sum_{k=1}^n \sum_{i_1,i_2=1}^{\nu} \sum_{j_1,j_2=1}^d \EE[(\mathfrak{X}_k)_{i_1j_1} (\mathfrak{X}_k)_{i_2j_2} \nabla_{i_2j_2} \nabla_{i_1j_1} f(\mathfrak{S}_k)] + R_1(f),
\]
where the remainder satisfies
\[
|R_1(f)| \leq \frac{1}{2 \!\sqrt{n}} \max_{\substack{i_1,i_2,i_3\in [\nu] \\ j_1,j_2,j_3\in [d]}} \|\nabla_{i_3j_3} \nabla_{i_2j_2} \nabla_{i_1j_1} f\|_{\infty} \sum_{i_1,i_2,i_3=1}^{\nu} \sum_{j_1,j_2,j_3=1}^d \EE[|\mathfrak{X}_{i_1j_1} \mathfrak{X}_{i_2j_2} \mathfrak{X}_{i_3j_3}|].
\]
By independence of $\mathfrak{S}_k$ and $\mathfrak{X}_k$ together with $\EE[(\mathfrak{X}_k)_{i_1j_1}] = 0$, we have
\[
\EE[(\mathfrak{X}_k)_{i_1j_1} \nabla_{i_1j_1} f(\mathfrak{S}_k)]
= \EE[(\mathfrak{X}_k)_{i_1j_1}] \EE[\nabla_{i_1j_1} f(\mathfrak{S}_k)] = 0.
\]
Moreover, for all $(i_1,j_1),(i_2,j_2)\in [\nu]\times [d]$,
\[
\EE[(\mathfrak{X}_k)_{i_1j_1} (\mathfrak{X}_k)_{i_2j_2}]
= (\Psi \otimes \Sigma)_{(i_1 - 1) d + j_1, (i_2 - 1) d + j_2}
= \Psi_{i_1i_2} \Sigma_{j_1j_2}.
\]
Inserting this back into the drift identity yields
\[
\EE[\tr\{\mathfrak{S}^{\top} \nabla f(\mathfrak{S})\}]
= \frac{1}{n} \sum_{k=1}^n \sum_{i_1,i_2=1}^{\nu} \sum_{j_1,j_2=1}^d \Psi_{i_1i_2} \Sigma_{j_1j_2} \EE[\nabla_{i_2j_2} \nabla_{i_1j_1} f(\mathfrak{S}_k)] + R_1(f).
\]

Apply a first-order Taylor expansion to the map $U\mapsto \nabla_{i_2j_2}\nabla_{i_1j_1} f(U)$ to get
\[
\big|\EE[\nabla_{i_2j_2} \nabla_{i_1j_1} f(\mathfrak{S})] - \EE[\nabla_{i_2j_2} \nabla_{i_1j_1} f(\mathfrak{S}_k)]\big|
\leq \frac{1}{\!\sqrt{n}} \sum_{i_3=1}^{\nu} \sum_{j_3=1}^d \|\nabla_{i_3j_3} \nabla_{i_2j_2} \nabla_{i_1j_1} f\|_{\infty} \, \EE[|(\mathfrak{X}_k)_{i_3j_3}|].
\]
Using this bound and the triangle inequality, we obtain
\begin{equation}\label{eq:full.expansion}
\begin{aligned}
\EE[\tr\{\mathfrak{S}^{\top} \nabla f(\mathfrak{S})\}]
&= \sum_{i_1,i_2=1}^{\nu} \sum_{j_1,j_2=1}^d \Psi_{i_1i_2} \Sigma_{j_1j_2} \EE[\nabla_{i_2j_2} \nabla_{i_1j_1} f(\mathfrak{S})] + R_1(f) + R_2(f), \\
&= \EE[\tr\{\Sigma \nabla^{\top} \Psi \nabla f(\mathfrak{S})\}] + R_1(f) + R_2(f),
\end{aligned}
\end{equation}
with
\[
|R_2(f)| \leq \frac{1}{\!\sqrt{n}} \max_{\substack{i_1,i_2,i_3\in [\nu] \\ j_1,j_2,j_3\in [d]}} \|\nabla_{i_3j_3} \nabla_{i_2j_2} \nabla_{i_1j_1} f\|_{\infty} \sum_{i_1,i_2,i_3=1}^{\nu} \sum_{j_1,j_2,j_3=1}^d |\Psi_{i_1i_2} \Sigma_{j_1j_2}| \, \EE[|\mathfrak{X}_{i_3j_3}|].
\]
Now take $f = f_h$, where $f_h$ is the semigroup solution associated with $h$. This is legitimate for the two classes of test functions used below: $\mathcal{H}_3\subseteq \mathrm{Lip}_{2}(\R^{\nu\times d})$ and $\mathcal{H}_2\subseteq \mathrm{Lip}_{1}(\R^{\nu\times d})$. Hence the $\mathrm{Lip}_{p}$ part of Theorem~\ref{thm:Stein.solutions} gives the Stein identity, while Theorem~\ref{thm:smoothness.estimates} gives the third-order derivative bounds on $f_h$ needed in the Taylor expansion. By \eqref{eq:Stein.equation.normal}, we have
\[
\tr\{\Sigma \nabla^{\top} \Psi \nabla f_h(X)\} - \tr\{X^{\top}\nabla f_h(X)\}
= \mathcal{A}^{\mathrm{OU}} f_h(X)
= h(X) - \EE[h(\mathfrak{Z})].
\]
Evaluating at $X=\mathfrak{S}$ and using \eqref{eq:full.expansion} in conjunction with the triangle inequality implies
\[
\big|\EE[h(\mathfrak{S})] - \EE[h(\mathfrak{Z})]\big| \leq |R_1(f_h)| + |R_2(f_h)|.
\]
By inequality \eqref{eq:Stein.bound.1} of Theorem~\ref{thm:smoothness.estimates} with $m=3$ and $h\in \mathcal{H}_3$,
\[
\max_{\substack{i_1,i_2,i_3\in [\nu] \\ j_1,j_2,j_3\in [d]}} \|\nabla_{i_3j_3}\nabla_{i_2j_2}\nabla_{i_1j_1} f_h\|_{\infty}
\leq \frac{1}{3} \max_{\substack{i_1,i_2,i_3\in [\nu] \\ j_1,j_2,j_3\in [d]}} \|\nabla_{i_3j_3}\nabla_{i_2j_2}\nabla_{i_1j_1} h\|_{\infty}
\leq\frac{1}{3}.
\]
The claimed bound \eqref{cltbd1} now follows. Similarly, by inequality \eqref{eq:Stein.bound.3} with $m=3$ and $h\in\mathcal{H}_2$,
\[
\max_{\substack{i_1,i_2,i_3\in [\nu] \\ j_1,j_2,j_3\in [d]}} \|\nabla_{i_3j_3}\nabla_{i_2j_2}\nabla_{i_1j_1} f_h\|_{\infty}
\leq \frac{\Gamma(3/2)}{\!\sqrt{2} \, \Gamma(2)} \, \max_{i\in [\nu], j\in [d]} \!\sqrt{(\Psi^{-1})_{ii} (\Sigma^{-1})_{\!j\hspace{-0.3mm}j}}
= \frac{\!\sqrt{2\pi}}{4} \, \max_{i\in [\nu], j\in [d]} \!\sqrt{(\Psi^{-1})_{ii} (\Sigma^{-1})_{\!j\hspace{-0.3mm}j}},
\]
and the claimed bound \eqref{cltbd2} now follows.
\end{proof}

\begin{proof}[\bf Proof of Proposition~\ref{prop:T.vs.normal}]\pdfbookmark[2]{Proof of Proposition \ref{prop:T.vs.normal}}{proof:T.vs.normal}
Let $\mathfrak{T}_n\sim \mathcal{T}_{\nu\times d}(n,0_{\nu\times d},\Psi\otimes\Sigma)$ and $\mathfrak{Z}\sim \mathcal{N}_{\nu\times d}(0_{\nu\times d}, \Psi \otimes \Sigma)$. Assume for now that $n > 3$, which is needed for the moments $\EE[\|\Psi^{-1/2} \mathfrak{T}_n \Sigma^{-1/2}\|_F]$ and $\EE[\|\Psi^{-1/2} \mathfrak{T}_n \Sigma^{-1/2}\|_F^3]$ to be finite later in the proof.

To derive a Stein operator, we exhibit an overdamped Langevin diffusion for which \eqref{eq:matrix.T} is stationary. Define the change of variable, $Y = \Psi^{-1/2} X \Sigma^{-1/2}$, to map $X\in \R^{\nu\times d}$ to an isotropic coordinate $Y\in \R^{\nu\times d}$. In $Y$-coordinates, the target density of $\mathfrak{Y} = \Psi^{-1/2} \mathfrak{T}_n \Sigma^{-1/2} \sim \mathcal{T}_{\nu\times d}(n,0_{\nu\times d},I_{\nu}\otimes I_d)$ is
\[
p_{\mathfrak{Y}}(Y)= \frac{\Gamma_{\nu}((n+\nu+d-1)/2) \big|I_{\nu} + n^{-1} Y Y^{\top}\big|^{-(n+\nu+d-1)/2}}{(\pi n)^{\nu d/2} \Gamma_{\nu}((n + \nu - 1)/2)},
\]
and the corresponding Langevin SDE is
\[
\rd \mathfrak{Y}_t = \nabla_Y \log p_{\mathfrak{Y}}(\mathfrak{Y}_t) \rd t + \!\sqrt{2} \, \rd \mathfrak{B}_t,
\]
where $\nabla_Y \log p_{\mathfrak{Y}}(Y) = -\{(n+\nu+d-1)/2\} (I_{\nu} + n^{-1} Y Y^{\top})^{-1} (2 n^{-1} Y)$, and $\mathfrak{B}_t$ is a $\nu\times d$ matrix of independent standard Brownian motions. Pushing back to $\mathfrak{T}_{n,t}$ via $\smash{\mathfrak{T}_{n,t} = \Psi^{1/2} \mathfrak{Y}_t \Sigma^{1/2}}$ yields
\begin{equation}\label{eq:T.Langevin.SDE}
\rd \mathfrak{T}_{n,t} = b(\mathfrak{T}_{n,t}) \rd t + \!\sqrt{2} \Psi^{1/2} \rd \mathfrak{B}_t \Sigma^{1/2},
\end{equation}
with drift
\[
\begin{aligned}
b(X)
&= \Psi^{1/2} \left\{\left.\nabla_Y \log p_{\mathfrak{Y}}(Y)\right|_{Y = \Psi^{-1/2} X \Sigma^{-1/2}}\right\} \Sigma^{1/2} \\
&= -\frac{n+\nu+d-1}{n} \Psi^{1/2} (I_{\nu} + n^{-1} \Psi^{-1/2} X \Sigma^{-1} X^{\top} \Psi^{-1/2})^{-1} (\Psi^{-1/2} X \Sigma^{-1/2}) \Sigma^{1/2}.
\end{aligned}
\]
A standard Fokker--Planck argument shows that \eqref{eq:matrix.T} is the unique invariant density of \eqref{eq:T.Langevin.SDE}.

By It\^o's formula, the extended generator of the Langevin diffusion \eqref{eq:T.Langevin.SDE} acts on $f\in C^2(\R^{\nu\times d})$ as
\begin{equation}\label{eq:T.generator}
\mathcal{A}^{\mathrm{L}} f(X)
= \tr\{b(X)^{\top} \nabla_X f(X)\} + \tr\{\Sigma \nabla_X^{\top} \Psi \nabla_X f(X)\}.
\end{equation}
Since the process is ergodic with stationary law $\mathcal{T}_{\nu\times d}(n,0_{\nu\times d},\Psi\otimes\Sigma)$, it follows that
\[
\EE\big[\mathcal{A}^{\mathrm{L}} f(\mathfrak{T}_n)\big] = 0, \qquad \forall f\in C_{\mathcal{A}^{\mathrm{L}}}^2(\R^{\nu\times d}),
\]
where
\[
\begin{aligned}
C_{\mathcal{A}^{\mathrm{L}}}^2(\R^{\nu\times d})
&= \Big\{f\in C^2(\R^{\nu\times d}) \, : \, \EE[|\tr\{b(\mathfrak{T}_n)^{\top} \nabla_X f(\mathfrak{T}_n)\}|] < \infty, ~\EE[|\tr\{\Sigma \nabla_X^{\top} \Psi \nabla_X f(\mathfrak{T}_n)\}|] < \infty \Big\}.
\end{aligned}
\]
This Stein identity parallels that of the matrix normal distribution in Corollary~\ref{cor:Stein.normal.step.1} and provides the basis for quantitative comparison between the two laws.

Indeed, let $h\in \mathcal{H}_1$. Let $f_h$ be the solution obtained in Theorem~\ref{thm:Stein.solutions} for the Stein equation $\mathcal{A}^{\mathrm{OU}} f_h(X) = h(X) - \EE[h(\mathfrak{Z})]$, and note that $\nabla_X f_h$ is bounded by \eqref{eq:Stein.bound.1} with $m=1$, while the second-order partial derivatives of $f_h$ are bounded by \eqref{eq:Stein.bound.3} with $m=2$ in Theorem~\ref{thm:smoothness.estimates}. In particular, since $n > 3$ implies $\EE[\|\Psi^{-1/2}\mathfrak{T}_n\Sigma^{-1/2}\|_F] < \infty$, we have $f_h\in C_{\mathcal{A}^{\mathrm{L}}}^2(\R^{\nu\times d})$. Then, we have
\[
|\EE[h(\mathfrak{T}_n)] - \EE[h(\mathfrak{Z})]|
= |\EE[\mathcal{A}^{\mathrm{OU}} f_h(\mathfrak{T}_n)]|
= |\EE[\mathcal{A}^{\mathrm{L}} f_h(\mathfrak{T}_n)] - \EE[\mathcal{A}^{\mathrm{OU}} f_h(\mathfrak{T}_n)]|
= |\EE[(\mathcal{A}^{\mathrm{L}} - \mathcal{A}^{\mathrm{OU}}) f_h(\mathfrak{T}_n)]|.
\]
Combining \eqref{eq:OU.process.generator} and \eqref{eq:T.generator} with the change of variable
\[
\mathfrak{Y} = \Psi^{-1/2} \mathfrak{T}_n \Sigma^{-1/2}\sim \mathcal{T}_{\nu\times d} (n,0_{\nu\times d},I_{\nu} \otimes I_d),
\]
we obtain
\[
\begin{aligned}
(\mathcal{A}^{\mathrm{L}} - \mathcal{A}^{\mathrm{OU}}) f_h(\mathfrak{T}_n)
&= \tr\{\Sigma^{1/2} \mathfrak{Y}^{\top} \Psi^{1/2} \nabla_X f_h(\mathfrak{T}_n)\} - \frac{n+\nu+d-1}{n} \, \tr\{\Sigma^{1/2} \mathfrak{Y}^{\top} (I_{\nu} + n^{-1} \mathfrak{Y} \mathfrak{Y}^{\top})^{-1} \Psi^{1/2} \nabla_X f_h(\mathfrak{T}_n)\} \\
&= \frac{1}{n} \tr\{\Sigma^{1/2} \mathfrak{Y}^{\top} \mathfrak{Y} \mathfrak{Y}^{\top} (I_{\nu} + n^{-1} \mathfrak{Y} \mathfrak{Y}^{\top})^{-1} \Psi^{1/2} \nabla_X f_h(\mathfrak{T}_n)\} \\
&\qquad- \frac{\nu+d-1}{n} \, \tr\{\Sigma^{1/2} \mathfrak{Y}^{\top} (I_{\nu} + n^{-1} \mathfrak{Y} \mathfrak{Y}^{\top})^{-1} \Psi^{1/2} \nabla_X f_h(\mathfrak{T}_n)\}.
\end{aligned}
\]
Then, applying the Cauchy--Schwarz inequality to any matrix $A = (a_{ji})_{1\leq j \leq d, 1 \leq i \leq \nu}\in\R^{d\times\nu}$,
\[
\sum_{i=1}^{\nu} \sum_{j=1}^d |a_{ji}| \leq \!\sqrt{\nu d} \!\sqrt{\sum_{i=1}^{\nu} \sum_{j=1}^d |a_{ji}|^2} = \!\sqrt{\nu d} \, \|A\|_F,
\]
yields
\[
\begin{aligned}
|\EE[(\mathcal{A}^{\mathrm{L}} - \mathcal{A}^{\mathrm{OU}}) f_h(\mathfrak{T}_n)]|
&\leq \frac{1}{n} \max_{k\in [\nu], \ell\in [d]} \|\nabla_{k\ell} f_h\|_{\infty} \, \EE\left[\sum_{i=1}^{\nu} \sum_{j=1}^d |(\Sigma^{1/2} \mathfrak{Y}^{\top} \mathfrak{Y} \mathfrak{Y}^{\top} (I_{\nu} + n^{-1} \mathfrak{Y} \mathfrak{Y}^{\top})^{-1} \Psi^{1/2})_{ji}|\right] \\[-1mm]
&\qquad+ \frac{\nu+d-1}{n} \max_{k\in [\nu], \ell\in [d]} \|\nabla_{k\ell} f_h\|_{\infty} \, \EE\left[\sum_{i=1}^{\nu} \sum_{j=1}^d |(\Sigma^{1/2} \mathfrak{Y}^{\top} (I_{\nu} + n^{-1} \mathfrak{Y} \mathfrak{Y}^{\top})^{-1} \Psi^{1/2})_{ji}|\right] \\
&\leq \frac{\!\sqrt{\nu d}}{n} \max_{k\in [\nu], \ell\in [d]} \|\nabla_{k\ell} f_h\|_{\infty} \, \EE\big[\|\Sigma^{1/2} \mathfrak{Y}^{\top} \mathfrak{Y} \mathfrak{Y}^{\top} (I_{\nu} + n^{-1} \mathfrak{Y} \mathfrak{Y}^{\top})^{-1} \Psi^{1/2}\|_F\big] \\
&\qquad+ \frac{(\nu+d-1) \!\sqrt{\nu d}}{n} \max_{k\in [\nu], \ell\in [d]} \|\nabla_{k\ell} f_h\|_{\infty} \, \EE\big[\|\Sigma^{1/2} \mathfrak{Y}^{\top} (I_{\nu} + n^{-1} \mathfrak{Y} \mathfrak{Y}^{\top})^{-1} \Psi^{1/2}\|_F\big].
\end{aligned}
\]
Using \eqref{eq:Stein.bound.1} of Theorem~\ref{thm:smoothness.estimates} to bound $\max_{k\in [\nu], \ell\in [d]} \|\nabla_{k\ell} f_h\|_{\infty} \leq 1$, we deduce
\[
\begin{aligned}
d_{\mathrm{W}}(\tau_n,\gamma)
&= \sup_{h\in\mathcal{H}_1} |\EE[h(\mathfrak{T}_n)] - \EE[h(\mathfrak{Z})]| \\
&\leq \frac{\!\sqrt{\nu d}}{n} \, \EE\big[\|\Sigma^{1/2} \mathfrak{Y}^{\top} \mathfrak{Y} \mathfrak{Y}^{\top} (I_{\nu} + n^{-1} \mathfrak{Y} \mathfrak{Y}^{\top})^{-1} \Psi^{1/2}\|_F\big] + \frac{(\nu+d-1) \!\sqrt{\nu d}}{n} \, \EE\big[\|\Sigma^{1/2} \mathfrak{Y}^{\top} (I_{\nu} + n^{-1} \mathfrak{Y} \mathfrak{Y}^{\top})^{-1} \Psi^{1/2}\|_F\big].
\end{aligned}
\]
Next, by properties of the Frobenius and spectral norms, we have, for any conformable matrices $A,B,C$ with $A$ and $C$ positive definite, that
\begin{equation}\label{eq:Frobenius.spectral.submult}
\|A B C\|_F \leq \|A\|_2 \|B\|_F \|C\|_2,
\end{equation}
see, e.g., \citet[p.~364, 5.6.P20]{MR2978290}. Moreover, since $n^{-1} \mathfrak{Y} \mathfrak{Y}^{\top}\in \mathcal{S}_{+}^{\nu}$ almost surely, its shifted inverse satisfies $\|(I_\nu + n^{-1} \mathfrak{Y} \mathfrak{Y}^{\top})^{-1}\|_2 \leq 1$, so the submultiplicativity of $\|\cdot\|_2$ yields
\[
\|(I_\nu + n^{-1} \mathfrak{Y} \mathfrak{Y}^{\top})^{-1} \Psi^{1/2}\|_2 \leq \|(I_\nu + n^{-1} \mathfrak{Y} \mathfrak{Y}^{\top})^{-1}\|_2 \|\Psi^{1/2}\|_2 \leq \|\Psi^{1/2}\|_2 = \|\Psi\|_2^{1/2}.
\]
Finally, one checks that $\|\mathfrak{Y}^{\top} \mathfrak{Y} \mathfrak{Y}^{\top}\|_F \leq \|\mathfrak{Y}\|_F^3$ and $\|\mathfrak{Y}^{\top}\|_F = \|\mathfrak{Y}\|_F$. Combining these facts and using that $\smash{\Sigma^{1/2}}$ and $\smash{\Psi^{1/2}}$ are positive definite by assumption gives
\[
\begin{aligned}
\|\Sigma^{1/2} \mathfrak{Y}^{\top} \mathfrak{Y} \mathfrak{Y}^{\top} (I_\nu + n^{-1} \mathfrak{Y} \mathfrak{Y}^{\top})^{-1} \Psi^{1/2}\|_F
&\leq \|\Sigma\|_2^{1/2} \|\mathfrak{Y}\|_F^3 \|\Psi\|_2^{1/2}, \\
\|\Sigma^{1/2} \mathfrak{Y}^{\top} (I_\nu + n^{-1} \mathfrak{Y} \mathfrak{Y}^{\top})^{-1} \Psi^{1/2}\|_F
&\leq \|\Sigma\|_2^{1/2} \|\mathfrak{Y}\|_F \|\Psi\|_2^{1/2},
\end{aligned}
\]
and thus, for $n>3$,
\[
\begin{aligned}
d_{\mathrm{W}}(\tau_n,\gamma)
&\leq \frac{\!\sqrt{\nu d}}{n} \, \|\Sigma\|_2^{1/2} \EE[\|\mathfrak{Y}\|_F^3] \, \|\Psi\|_2^{1/2} + \frac{(\nu+d-1) \!\sqrt{\nu d}}{n} \, \|\Sigma\|_2^{1/2} \EE[\|\mathfrak{Y}\|_F] \, \|\Psi\|_2^{1/2} \\
&\leq \frac{\!\sqrt{\nu d}}{n} \big\{\EE[\|\mathfrak{Y}\|_F^3] + (\nu + d - 1) \, \EE[\|\mathfrak{Y}\|_F]\big\} \, \|\Psi\|_2^{1/2} \|\Sigma\|_2^{1/2}.
\end{aligned}
\]

To derive explicit bounds on the Frobenius moments, assume $n > 4$. By the Cauchy--Schwarz inequality and the Frobenius moment formulas in Lemma~\ref{lem:Frobenius.even.moments}, we have
\[
\EE[\|\mathfrak{Y}\|_F^3]
\leq \!\sqrt{\EE[\|\mathfrak{Y}\|_F^2]} \!\sqrt{\EE[\|\mathfrak{Y}\|_F^4]} \leq \!\sqrt{2 \nu d} \!\sqrt{\frac{6 (\nu d)^2}{(1-4/n)}}
= \frac{2\!\sqrt{3}}{\!\sqrt{1 - 4/n}} \, (\nu d)^{3/2}, \qquad
\EE[\|\mathfrak{Y}\|_F]
\leq \!\sqrt{\EE[\|\mathfrak{Y}\|_F^2]} \leq \!\sqrt{2 \nu d},
\]
and thus, for $n>4$,
\begin{equation}\label{tbound0}
d_{\mathrm{W}}(\tau_n,\gamma) \leq \frac{(2\!\sqrt{3} + \!\sqrt{2})}{\!\sqrt{1-4/n}} \, \frac{(\nu d)^2}{n} \, \|\Psi\|_2^{1/2} \|\Sigma\|_2^{1/2}.
\end{equation}

Finally, we deduce that the bound \eqref{tbound} holds for all $n>2$. That the bound \eqref{tbound} holds for $n\geq 5$ follows easily from inequality \eqref{tbound0} since $1/\!\sqrt{1-4/n}\leq \!\sqrt{3}/\!\sqrt{1-2/n}$ for $n\geq 5$, and $\smash{\!\sqrt{3}(2\!\sqrt{3} + \!\sqrt{2}) \leq 10}$. That the bound \eqref{tbound} holds for $n\in (2,5)$ is verified by exploiting a basic property of the Wasserstein distance:
\[
\begin{aligned}
d_{\mathrm{W}}(\tau_n,\gamma)
&\leq \EE[\|\mathfrak{T}_n-\mathfrak{Z}\|_F]
\leq \EE[\|\mathfrak{T}_n\|_F]+ \EE[\|\mathfrak{Z}\|_F]
\leq \!\sqrt{\EE[\|\mathfrak{T}_n\|_F^2]}+\!\sqrt{\EE[\|\mathfrak{Z}\|_F^2]}
= \!\sqrt{\frac{n}{n-2} \, \tr(\Psi) \tr(\Sigma)} + \!\sqrt{\tr(\Psi) \tr(\Sigma)} \\
&\leq \!\sqrt{\frac{n}{n-2} \, \nu \, \|\Psi\|_2 \, d \, \|\Sigma\|_2} + \!\sqrt{\nu \, \|\Psi\|_2 \, d \, \|\Sigma\|_2}
< \frac{2}{\!\sqrt{1-2/n}} \, \!\sqrt{\nu d} \, \|\Psi\|_2^{1/2} \|\Sigma\|_2^{1/2}.
\end{aligned}
\]
Since $(2/\!\sqrt{1-2/n}) \, \!\sqrt{\nu d} \leq (10/\!\sqrt{1-2/n}) \, (\nu d)^2/n$ for $n\in (2,5)$ and $\nu,d\geq 1$, it follows that the bound \eqref{tbound} holds for all $n > 2$. This completes the proof of the bound \eqref{tbound}.

We now prove that the $n^{-1}$ rate is optimal. It suffices to consider the case $\nu=d=1$ and $\Psi=\Sigma=1$. In this case, let $T_n$ denote a random variable following the univariate Student's $t$-distribution with $n$ degrees of freedom. The density of $T_n$ is
\[
p_{T_n}(x) = \frac{\Gamma((n + 1)/2)}{\!\sqrt{\pi n} \, \Gamma(n/2)}\left(1 + \frac{x^2}{n}\right)^{-(n + 1)/2}, \qquad x\in\R.
\]
Also, let $Z$ denote a standard normal random variable, and denote the probability measures of $T_n$ and $Z$ by $\tau_n$ and $\gamma$, respectively. Consider the $1$-Lipschitz test function $h_{\star}:\R\to[0,1]$ defined by $h_{\star}(x)=1 + x$ for $-1<x<0$, by $h_{\star}(x)=1-x$ for $0\leq x<1$, and by $h_{\star}(x)=0$ for $|x|\geq1$. We will prove that
\begin{equation}\label{eq:optimal.to.prove}
d_{\mathrm{W}}(\tau_n,\gamma) \geq |\EE[h_{\star}(T_n)]-\EE[h_{\star}(Z)]| \geq \frac{3\!\sqrt{e}-4}{2\!\sqrt{2e\pi}} \times \frac{1}{n} + \mathcal{O}(n^{-2}), \qquad n\to\infty.
\end{equation}
The first inequality in \eqref{eq:optimal.to.prove} is immediate since $h_{\star}\in\mathcal{H}_1$. We will now establish the second inequality.

By Stirling's approximation, we have that
\[
\frac{\Gamma((n + 1)/2)}{\!\sqrt{\pi n} \, \Gamma(n/2)} = \frac{1}{\!\sqrt{2\pi}}\left(1-\frac{1}{4n} + \mathcal{O}(n^{-2})\right), \qquad n\to\infty.
\]
We also note the elementary expansion, uniformly for $x\in[0,1]$,
\[
\left(1 + \frac{x^2}{n}\right)^{-(n + 1)/2} = e^{-x^2/2}\left[1 + \frac{1}{n}\left(\frac{x^4}{4}-\frac{x^2}{2}\right) + \mathcal{O}(n^{-2})\right], \qquad n\to\infty.
\]
Combining these two expansions yields, uniformly for $x\in[0,1]$,
\[
p_{T_n}(x) = \frac{1}{\!\sqrt{2\pi}}e^{-x^2/2}\left[1 + \frac{1}{n}\left(\frac{x^4}{4}-\frac{x^2}{2}-\frac{1}{4}\right) + \mathcal{O}(n^{-2})\right], \qquad n\to\infty.
\]
Thus, letting $p_Z(x)$ denote the standard normal density and using that the densities $p_{T_n}(x)$ and $p_Z(x)$ are symmetric about the origin, we obtain that
\[
\begin{aligned}
\EE[h_{\star}(T_n)]-\EE[h_{\star}(Z)]
&= 2\int_0^1 (1-x)(p_{T_n}(x)-p_Z(x)) \rd x \\
&= \frac{2}{n}\int_0^1 (1-x) \frac{1}{\!\sqrt{2\pi}}e^{-x^2/2}\left(\frac{x^4}{4}-\frac{x^2}{2}-\frac{1}{4}\right) \rd x + \mathcal{O}(n^{-2})
= -\frac{3\!\sqrt{e}-4}{2\!\sqrt{2e\pi}} \times \frac{1}{n} + \mathcal{O}(n^{-2}),
\end{aligned}
\]
where the integral was evaluated using \emph{Mathematica}. This completes the proof.
\end{proof}

\begin{proof}[\bf Proof of Proposition~\ref{prop:projected.Stein}]\pdfbookmark[2]{Proof of Proposition \ref{prop:projected.Stein}}{proof:projected.Stein}
Let $\smash{\mathfrak{X}^{(1)},\ldots,\mathfrak{X}^{(n)} \stackrel{\mathrm{iid}}{\sim} \mathcal{N}_{\nu\times d}(0_{\nu\times d},\Psi\otimes\Sigma)}$, and let $\mathfrak{X}$ denote a generic copy. We first check that $M_r(\Psi)$ and $N_m(\Sigma)$ are unbiased for $\Sigma$ and $\Psi$, respectively. Fix $r\in \{1,\ldots,R\}$. Writing $\mathfrak{Y}=\Psi^{-1/2}\mathfrak{X}$, we have $\mathfrak{Y}\sim \mathcal{N}_{\nu\times d}(0_{\nu\times d},I_{\nu}\otimes\Sigma)$ and
\[
\mathfrak{X}^{\top}\, \Psi^{-1/2} W_r \Psi^{-1/2}\, \mathfrak{X}
= \mathfrak{Y}^{\top} W_r \mathfrak{Y}.
\]
Using the standard matrix normal identity $\EE[\mathfrak{Y}^{\top} A \mathfrak{Y}]=\tr(A)\,\Sigma$ for $\mathfrak{Y}\sim\mathcal{N}_{\nu\times d}(0_{\nu\times d},I_{\nu}\otimes\Sigma)$ and $A\in\mathcal{S}^{\nu}$ \citep[see, e.g.,][Theorem~2.3.5~(i)]{MR1738933}, we obtain
\[
\EE\big[\mathfrak{X}^{\top}\, \Psi^{-1/2} W_r \Psi^{-1/2}\, \mathfrak{X}\big]
= \EE[\mathfrak{Y}^{\top} W_r \mathfrak{Y}] = \tr(W_r)\,\Sigma.
\]
Since $\mathfrak{X}^{(1)},\ldots,\mathfrak{X}^{(n)}$ are identically distributed, it follows that
\begin{equation}\label{eq:unbiased.M.r}
\EE[M_r(\Psi)]
= \frac{1}{\tr(W_r)} \, \EE\big[(\mathfrak{X}^{(1)})^{\top}\, \Psi^{-1/2} W_r \Psi^{-1/2}\, \mathfrak{X}^{(1)}\big]
= \Sigma.
\end{equation}
The argument for $N_m(\Sigma)$ is analogous: letting $\mathfrak{Z}=\mathfrak{X}\Sigma^{-1/2}$ so that $\mathfrak{Z}\sim \mathcal{N}_{\nu\times d}(0_{\nu\times d},\Psi\otimes I_d)$, one checks that
\begin{equation}\label{eq:unbiased.N.m}
\EE[N_m(\Sigma)]
= \frac{1}{\tr(U_m)} \, \EE\big[\mathfrak{X}^{(1)}\, \Sigma^{-1/2} U_m \Sigma^{-1/2}\, (\mathfrak{X}^{(1)})^{\top}\big]
= \Psi.
\end{equation}

Define the population moment vectors $\smash{\bb{\theta}^{(\Sigma)}\in\R^M}$ and $\smash{\bb{\theta}^{(\Psi)}\in\R^R}$ by
\[
\theta^{(\Sigma)}_m = \EE[y^{(\Sigma)}_m], \qquad
\theta^{(\Psi)}_r = \EE[y^{(\Psi)}_r].
\]
By the unbiasedness just proved,
\[
\begin{aligned}
\theta^{(\Sigma)}_m
&= \EE\left[\frac{1}{R} \sum_{r=1}^R \tr\{M_r(\Psi)\, U_m\}\right]
= \frac{1}{R} \sum_{r=1}^R \tr\{\EE[M_r(\Psi)]\, U_m\}
= \tr(\Sigma U_m), \\
\theta^{(\Psi)}_r
&= \EE\left[\frac{1}{M} \sum_{m=1}^M \tr\{N_m(\Sigma)\, W_r\}\right]
= \frac{1}{M} \sum_{m=1}^M \tr\{\EE[N_m(\Sigma)]\, W_r\}
= \tr(\Psi W_r).
\end{aligned}
\]
Next, use the linear structure of $\Sigma$ and $\Psi$. By assumption, there exist $\bb{\beta}^{\star}\in\R^p$ and $\bb{\alpha}^{\star}\in\R^q$ such that
\[
\Sigma = \Sigma(\bb{\beta}^{\star}) = \sum_{j=1}^p \beta^{\star}_j B_j, \qquad
\Psi = \Psi(\bb{\alpha}^{\star}) = \sum_{\ell=1}^q \alpha^{\star}_{\ell} A_{\ell}.
\]
Therefore,
\[
\theta^{(\Sigma)}_m
= \tr(\Sigma U_m)
= \sum_{j=1}^p \beta^{\star}_j \, \tr(B_j U_m)
= \big(C^{(\Sigma)} \bb{\beta}^{\star}\big)_m, \qquad
\theta^{(\Psi)}_r
= \tr(\Psi W_r)
= \sum_{\ell=1}^q \alpha^{\star}_{\ell} \, \tr(A_{\ell} W_r)
= \big(C^{(\Psi)} \bb{\alpha}^{\star}\big)_r.
\]
Thus, at the population level, we have the linear systems:
\begin{equation}\label{eq:pop.systems}
C^{(\Sigma)} \bb{\beta}^{\star} = \bb{\theta}^{(\Sigma)}, \qquad
C^{(\Psi)} \bb{\alpha}^{\star} = \bb{\theta}^{(\Psi)}.
\end{equation}

Now consider the sample quantities $\smash{\bb{y}^{(\Sigma)} = (y^{(\Sigma)}_1,\ldots,y^{(\Sigma)}_M)^{\top}}$ and $\smash{\bb{y}^{(\Psi)} = (y^{(\Psi)}_1,\ldots,y^{(\Psi)}_R)^{\top}}$, where we recall that
\[
\begin{aligned}
y^{(\Sigma)}_m &= \frac{1}{R} \sum_{r=1}^{R} \tr\{M_r(\Psi)\, U_m\} = \frac{1}{n} \sum_{k=1}^n \frac{1}{R} \sum_{r=1}^R \frac{\tr\{(\mathfrak{X}^{(k)})^{\top}\, \Psi^{-1/2} W_r \Psi^{-1/2}\, \mathfrak{X}^{(k)} U_m\}}{\tr(W_r)}, \\
y^{(\Psi)}_r &= \frac{1}{M} \sum_{m=1}^{M} \tr\{N_m(\Sigma)\, W_r\} = \frac{1}{n} \sum_{k=1}^n \frac{1}{M} \sum_{m=1}^M \frac{\tr\{\mathfrak{X}^{(k)} \, \Sigma^{-1/2} U_m \Sigma^{-1/2}\, (\mathfrak{X}^{(k)})^{\top} W_r\}}{\tr(U_m)}.
\end{aligned}
\]
By combining the strong law of large numbers with \eqref{eq:unbiased.M.r} and \eqref{eq:unbiased.N.m}, we have, as $n\to \infty$,
\begin{equation}\label{eq:y.to.theta}
\bb{y}^{(\Sigma)} \to \bb{\theta}^{(\Sigma)}, \qquad
\bb{y}^{(\Psi)} \to \bb{\theta}^{(\Psi)}, \qquad \text{a.s.}
\end{equation}
Under the full column rank assumptions on $\smash{C^{(\Sigma)}}$ and $\smash{C^{(\Psi)}}$, the least-squares solutions have the closed forms
\[
\widehat{\bb{\beta}} = \left(\big(C^{(\Sigma)}\big)^\top C^{(\Sigma)}\right)^{-1} \big(C^{(\Sigma)}\big)^\top \bb{y}^{(\Sigma)}, \qquad
\widehat{\bb{\alpha}} = \left(\big(C^{(\Psi)}\big)^\top C^{(\Psi)}\right)^{-1} \big(C^{(\Psi)}\big)^\top \bb{y}^{(\Psi)},
\]
so the continuity of the least-squares map, combined with \eqref{eq:pop.systems} and \eqref{eq:y.to.theta} above, yields
\[
\widehat{\bb{\beta}} \to \left(\big(C^{(\Sigma)}\big)^\top C^{(\Sigma)}\right)^{-1} \big(C^{(\Sigma)}\big)^\top \bb{\theta}^{(\Sigma)}
= \bb{\beta}^{\star}, \qquad
\widehat{\bb{\alpha}} \to \left(\big(C^{(\Psi)}\big)^\top C^{(\Psi)}\right)^{-1} \big(C^{(\Psi)}\big)^\top \bb{\theta}^{(\Psi)}
= \bb{\alpha}^{\star}.
\]
Since $\bb{\beta}\mapsto \Sigma(\bb{\beta})$ and $\bb{\alpha}\mapsto \Psi(\bb{\alpha})$ are linear in their arguments, it follows that, as $n\to \infty$,
\[
\widehat{\Sigma} = \Sigma(\widehat{\bb{\beta}}) \to \Sigma(\bb{\beta}^{\star}) = \Sigma, \qquad
\widehat{\Psi} = \Psi(\widehat{\bb{\alpha}}) \to \Psi(\bb{\alpha}^{\star}) = \Psi, \qquad \text{a.s.}
\]
Since $\Sigma\in\mathcal{S}_{++}^d$ and $\Psi\in\mathcal{S}_{++}^{\nu}$, this convergence also implies that $\widehat{\Sigma}\in\mathcal{S}_{++}^d$ and $\widehat{\Psi}\in\mathcal{S}_{++}^{\nu}$ eventually almost surely, although such positive definiteness need not hold at a fixed finite $n$ for the unconstrained least-squares solutions. Finally, $(\widehat{\bb{\beta}},\widehat{\bb{\alpha}})$ minimizes the squared residuals associated with \eqref{eq:structured.Sigma.moments}, so $\smash{\widehat{\Sigma}}$ and $\smash{\widehat{\Psi}}$ solve the projected Stein moment equations in the least-squares sense. This concludes the proof.
\end{proof}

\begin{appendices}

\renewcommand{\thesection}{Appendix~\Alph{section}}

\section{Technical lemmas}\label{app:tech.lemmas}

\renewcommand{\thesection}{\Alph{section}}

The first lemma gives closed-form expressions for expectations of the trace of powers and powers of the trace of inverse Wishart matrices with identity scale.

\begin{lemma}\label{lem:trace.moments.inverse.Wishart}
Let $\mathfrak{W}\sim\mathrm{Wishart}_d(\alpha,I_d)$ with $\alpha>d-1$. If $\alpha > d + 1$, then
\[
\EE[\tr(\mathfrak{W}^{-1})] = \frac{d}{(\alpha-d-1)},
\]
and, if $\alpha > d+3$, then
\[
\EE[\tr(\mathfrak{W}^{-2})] = \frac{(\alpha-d-1) \, d + d^{\hspace{0.2mm}2}}{(\alpha-d)(\alpha-d-1)(\alpha-d-3)}, \qquad
\EE[\tr(\mathfrak{W}^{-1})^2] = \frac{(\alpha-d-2) \, d^{\hspace{0.2mm}2} + 2d}{(\alpha-d)(\alpha-d-1)(\alpha-d-3)}.
\]
\end{lemma}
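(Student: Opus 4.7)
The plan is to reduce each of the three identities to a classical matrix moment formula for the inverse Wishart distribution, and then take a trace. For the first identity, I would invoke the standard fact that $\EE[\mathfrak{W}^{-1}] = (\alpha-d-1)^{-1}\Sigma^{-1}$ for $\mathfrak{W}\sim\mathrm{Wishart}_d(\alpha,\Sigma)$ with $\alpha>d+1$ (e.g., Muirhead's multivariate analysis monograph, or von Rosen's 1988 paper on inverse Wishart moments). Specializing to $\Sigma=I_d$ and taking the trace gives $\EE[\tr(\mathfrak{W}^{-1})]=d/(\alpha-d-1)$.

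For the second-order quantities, I would invoke the two Haff--von Rosen matrix identities: for any real $\alpha>d+3$ and any constant symmetric matrix $A$,
\begin{align*}
\EE[\mathfrak{W}^{-1}A\mathfrak{W}^{-1}]
&= \frac{(\alpha-d-1)\,\Sigma^{-1}A\Sigma^{-1} + \tr(A\Sigma^{-1})\,\Sigma^{-1}}{(\alpha-d)(\alpha-d-1)(\alpha-d-3)}, \\
\EE[\tr(A\mathfrak{W}^{-1})\,\mathfrak{W}^{-1}]
&= \frac{2\,\Sigma^{-1}A\Sigma^{-1} + (\alpha-d-2)\,\tr(A\Sigma^{-1})\,\Sigma^{-1}}{(\alpha-d)(\alpha-d-1)(\alpha-d-3)}.
\end{align*}
Setting $\Sigma=A=I_d$ in the first formula collapses the numerator to $[(\alpha-d-1)+d]\,I_d$; taking the trace produces $\EE[\tr(\mathfrak{W}^{-2})] = \{(\alpha-d-1)d+d^2\}/\{(\alpha-d)(\alpha-d-1)(\alpha-d-3)\}$, as claimed. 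Likewise, setting $\Sigma=A=I_d$ in the second formula and tracing yields $\EE[(\tr\mathfrak{W}^{-1})^2]=\{2d+(\alpha-d-2)d^2\}/\{(\alpha-d)(\alpha-d-1)(\alpha-d-3)\}$. A quick sanity check at $d=1$ collapses both expressions to $1/\{(\alpha-2)(\alpha-4)\}$, matching the known reciprocal second moment of a $\chi^2_\alpha$ variable.

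There is no substantive obstacle beyond locating and citing a source for the two matrix identities above; the remainder is direct substitution. If a fully self-contained derivation were preferred, one could alternatively exploit the orthogonal invariance of $\mathfrak{W}\sim\mathrm{Wishart}_d(\alpha,I_d)$ and express $\EE[\tr(\mathfrak{W}^{-k})]$ and $\EE[(\tr\mathfrak{W}^{-1})^2]$ as Selberg-type integrals against the joint Wishart eigenvalue density, or apply Haff-type integration-by-parts identities directly to the Wishart density. The degree-of-freedom thresholds $\alpha>d+1$ and $\alpha>d+3$ arise solely from requiring integrability of $\tr(\mathfrak{W}^{-1})$ and $\tr(\mathfrak{W}^{-2})$, respectively, and should be flagged at the start of the proof.
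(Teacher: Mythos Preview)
Your proposal is correct and follows essentially the same approach as the paper: both cite known matrix moment formulas for the inverse Wishart (the paper uses von Rosen's Theorem~3.1(i) and Corollary~3.1(i) for $\EE[\mathfrak{W}^{-1}]$ and $\EE[\mathfrak{W}^{-2}]$, and Gupta--Nagar/Marx for $\EE[\tr(\mathfrak{W}^{-1})^2]$), specialize to $\Sigma=I_d$, and take traces. The only cosmetic difference is that you invoke the slightly more general Haff--von Rosen identities with an auxiliary matrix $A$ before setting $A=I_d$, whereas the paper quotes the $A=I_d$ versions directly.
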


\begin{proof}[\bf Proof of Lemma~\ref{lem:trace.moments.inverse.Wishart}]
By Theorem~3.1~(i) of \citet{MR968156}, we know that
\[
\EE[\mathfrak{W}^{-1}] = \frac{I_d}{(\alpha-d-1)}, \qquad \alpha > d+1,
\]
and by Corollary~3.1~(i) of \citet{MR968156} (there is a typo there, which we confirmed by Monte Carlo simulations; it should read $\EE[\mathfrak{W}^{-1} \mathfrak{W}^{-1}] = (c_1 + c_2) \Sigma^{-1} \Sigma^{-1} + c_2 \Sigma^{-1} \, \tr(\Sigma^{-1})$), we know that
\[
\EE[\mathfrak{W}^{-2}] = \frac{I_d}{(\alpha-d)(\alpha-d-3)} + \frac{d \, I_d}{(\alpha-d)(\alpha-d-1)(\alpha-d-3)}, \qquad \alpha > d+3.
\]
Taking the trace on both sides of these equations yields the first two claimed formulas. The third formula can be found in Theorem~3.3.18 of \citet{MR1738933} and is due to \citet{Marx1981PhD}. This concludes the proof.
\end{proof}

The second lemma provides expressions for the second and fourth moments of the Frobenius norm of the standard matrix $T$ distribution.

\begin{lemma}\label{lem:Frobenius.even.moments}
Let $\mathfrak{Y}\sim \mathcal{T}_{\nu\times d}(n,0_{\nu\times d},I_{\nu}\otimes I_d)$ for some $\nu,d\in \N$, as defined in \eqref{eq:matrix.T}. Then, for any real $n > 2$,
\[
\EE[\|\mathfrak{Y}\|_F^2] = \frac{\nu d}{(1-2/n)},
\]
and, for any real $n > 4$,
\[
\EE\left[\|\mathfrak{Y}\|_F^4\right] = \frac{2\nu \{(1-2/n) \, d + d^{\hspace{0.2mm}2}/n\} + \nu^2 \{(1-3/n) \, d^{\hspace{0.2mm}2} + 2d/n\}}{(1-1/n)(1-2/n)(1-4/n)}.
\]
In particular, for any real $n > 4$, we have
\[
\EE[\|\mathfrak{Y}\|_F^2] \leq 2 \nu d, \qquad \EE\left[\|\mathfrak{Y}\|_F^4\right] \leq \frac{6 (\nu d)^2}{(1-4/n)}.
\]
\end{lemma}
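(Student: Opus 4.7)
The plan is to exploit the scale-mixture representation of the standard matrix $T$ law, which turns both Frobenius moment calculations into conditioning arguments against traces of powers of an inverse Wishart matrix. My first step is to establish
\[
\mathfrak{Y} \stackrel{d}{=} \sqrt{n}\, \mathfrak{W}^{-1/2} \mathfrak{Z},
\]
with $\mathfrak{Z}\sim \mathcal{N}_{\nu\times d}(0_{\nu\times d}, I_{\nu} \otimes I_d)$ independent of $\mathfrak{W}\sim \mathrm{Wishart}_{\nu}(n+\nu-1, I_{\nu})$. This would be verified by noting that $\mathfrak{Y}\mid\mathfrak{W}\sim \mathcal{N}_{\nu\times d}(0_{\nu\times d}, n\mathfrak{W}^{-1}\otimes I_d)$ and integrating its density against the Wishart density; the resulting Gaussian--Wishart integral collapses to $|I_{\nu} + n^{-1}YY^{\top}|^{-(\alpha+d)/2}$ times an explicit multivariate gamma ratio, and matching with \eqref{eq:matrix.T} fixes the Wishart degrees of freedom at $\alpha=n+\nu-1$. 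From here, Lemma~\ref{lem:trace.moments.inverse.Wishart} (with the substitutions $d\to\nu$ and $\alpha\to n+\nu-1$, so that $\alpha-\nu=n-1$, $\alpha-\nu-1 = n-2$, $\alpha-\nu-3 = n-4$) supplies all the inverse Wishart trace averages needed.

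\textbf{Second moment.} Writing $\|\mathfrak{Y}\|_F^2 = n\,\tr(\mathfrak{Z}^{\top} \mathfrak{W}^{-1}\mathfrak{Z})$ and applying columnwise the identity $\EE[\mathfrak{Z}_j^{\top} A\mathfrak{Z}_j] = \tr(A)$ for symmetric $A$, we have $\EE[\|\mathfrak{Y}\|_F^2\mid\mathfrak{W}] = nd\,\tr(\mathfrak{W}^{-1})$. Averaging and invoking the first formula of Lemma~\ref{lem:trace.moments.inverse.Wishart} yields $\EE[\|\mathfrak{Y}\|_F^2] = nd\nu/(n-2) = \nu d/(1-2/n)$.

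\textbf{Fourth moment.} Decomposing $\mathfrak{Z} = [\mathfrak{Z}_1,\ldots,\mathfrak{Z}_d]$ into iid $\mathcal{N}_{\nu}(0, I_{\nu})$ columns, the standard Gaussian quadratic-form identity $\EE[(\mathfrak{Z}_j^{\top} A\mathfrak{Z}_j)^2] = (\tr A)^2 + 2\tr(A^2)$ combined with conditional independence of the columns gives
\[
\EE\big[\tr(\mathfrak{Z}^{\top} \mathfrak{W}^{-1}\mathfrak{Z})^2 \,\big|\, \mathfrak{W}\big] = d^2(\tr \mathfrak{W}^{-1})^2 + 2d\,\tr(\mathfrak{W}^{-2}),
\]
so $\EE[\|\mathfrak{Y}\|_F^4] = n^2\{d^2\EE[(\tr \mathfrak{W}^{-1})^2] + 2d\EE[\tr(\mathfrak{W}^{-2})]\}$. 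Substituting the two second-moment formulas of Lemma~\ref{lem:trace.moments.inverse.Wishart} produces a common denominator $n(n-1)(n-2)(n-4)$.

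\textbf{Final assembly and bounds.} The main remaining step, and the part I expect to be the most error-prone, is the bookkeeping needed to regroup the resulting polynomial in $\nu,d,n$ into the stated normalized form with numerator $2\nu\{(1-2/n)d + d^{2}/n\} + \nu^{2}\{(1-3/n)d^{2} + 2d/n\}$ and denominator $(1-1/n)(1-2/n)(1-4/n)$; this is a routine but delicate rearrangement, and is the only non-mechanical part of the argument. The two concluding bounds then follow by elementary estimates on the reciprocal factors: for $n>4$ one has $(1-2/n)^{-1} \leq 2$, yielding $\EE[\|\mathfrak{Y}\|_F^{2}]\leq 2\nu d$; and for $n>4$ and $\nu,d\geq 1$, the quartic numerator is bounded above by $6(\nu d)^{2}(1-1/n)(1-2/n)$, which after cancellation yields $\EE[\|\mathfrak{Y}\|_F^{4}] \leq 6(\nu d)^{2}/(1-4/n)$.
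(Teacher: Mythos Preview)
Your proposal is correct and follows essentially the same strategy as the paper: represent $\mathfrak{Y}$ as a Wishart scale mixture of a standard matrix normal, condition on the Wishart to reduce to Gaussian quadratic-form moments, and then invoke Lemma~\ref{lem:trace.moments.inverse.Wishart} for the inverse Wishart trace averages. The only (inessential) difference is that you use the \emph{left} mixture $\mathfrak{Y}\stackrel{d}{=}\sqrt{n}\,\mathfrak{W}^{-1/2}\mathfrak{Z}$ with $\mathfrak{W}\sim\mathrm{Wishart}_{\nu}(n+\nu-1,I_{\nu})$ and decompose $\mathfrak{Z}$ by columns, whereas the paper uses the \emph{right} mixture $\mathfrak{Y}\stackrel{d}{=}\sqrt{n}\,\mathfrak{Z}\mathfrak{W}^{-1/2}$ with $\mathfrak{W}\sim\mathrm{Wishart}_d(n+d-1,I_d)$ and decomposes by rows; by the $\nu\leftrightarrow d$ symmetry of the standard matrix $T$ law (via Sylvester's determinant identity $|I_{\nu}+n^{-1}YY^{\top}|=|I_d+n^{-1}Y^{\top}Y|$), the two routes are mirror images and yield the identical final expressions.
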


\begin{proof}[\bf Proof of Lemma~\ref{lem:Frobenius.even.moments}]
By Theorem~4.2.1 of \citet{MR1738933} (recall that we have an extra $\!\sqrt{n}$-scaling in our definition), note that if $\mathfrak{Z}\sim \mathcal{N}_{\nu\times d}(0_{\nu\times d},I_{\nu} \otimes I_d)$ and $\mathfrak{W}\sim \mathrm{Wishart}_d(n+d-1,I_d)$ are independent, then $\mathfrak{Y} \smash{\stackrel{\mathrm{law}}{=}} \!\sqrt{n} \, \mathfrak{Z} \mathfrak{W}^{-1/2}$. Therefore, by the invariance of the trace under cyclic permutations, we have
\[
n^{-1} \, \EE[\|\mathfrak{Y}\|_F^2]
= \EE\big[\tr((\mathfrak{Z} \mathfrak{W}^{-1/2})^{\top} \mathfrak{Z} \mathfrak{W}^{-1/2})\big]
= \EE\big[\tr(\mathfrak{W}^{-1} \mathfrak{Z}^{\top} \mathfrak{Z})\big].
\]
Upon conditioning on $\mathfrak{W}$, using the fact that $I_d$ is the column-scale matrix of $\mathfrak{Z}$, and applying Lemma~\ref{lem:trace.moments.inverse.Wishart} with $\alpha = n + d - 1$ and $n > 2$, the above is equal to
\[
\EE\big[\tr\{\mathfrak{W}^{-1} \EE[\mathfrak{Z}^{\top} \mathfrak{Z} \mid \mathfrak{W}]\}\big]
= \EE\left[\tr\left\{\mathfrak{W}^{-1} \sum_{i=1}^{\nu} \EE[\mathfrak{Z}_{i\bigcdot}^{\top} \mathfrak{Z}_{i\bigcdot}]\right\}\right]
= \EE\left[\tr(\mathfrak{W}^{-1} \nu \, I_d)\right]
= \nu \, \EE[\tr(\mathfrak{W}^{-1})]
= \frac{\nu d}{(n-2)},
\]
so that
\[
\EE[\|\mathfrak{Y}\|_F^2] = \frac{\nu d}{(1-2/n)},
\]
proving the first claim of the lemma.

Next, letting $X_i = \mathfrak{Z}_{i\bigcdot} \mathfrak{W}^{-1} \mathfrak{Z}_{i\bigcdot}^{\top}$ for $i\in [\nu]$, we have
\begin{equation}\label{eq:fourth.power.Frobenius}
\begin{aligned}
n^{-2} \, \EE\left[\|\mathfrak{Y}\|_F^4\right]
&= \EE\left[\left(\tr(\mathfrak{W}^{-1} \mathfrak{Z}^{\top} \mathfrak{Z})\right)^2\right]
= \EE\left[\EE\left[\left(\sum_{i=1}^{\nu} \mathfrak{Z}_{i\bigcdot} \mathfrak{W}^{-1} \mathfrak{Z}_{i\bigcdot}^{\top}\right)^2 \,\bigm|\,\mathfrak{W}\right]\right]
= \EE\left[\sum_{i=1}^{\nu} \EE[X_i^2\mid \mathfrak{W}] + 2 \sum_{1\leq i < k \leq \nu}\EE[X_i X_k\mid \mathfrak{W}]\right] \\
&= \EE\left[\sum_{i=1}^{\nu} \{\Var[X_i\mid \mathfrak{W}] + (\EE[X_i\mid \mathfrak{W}])^2\} + 2\sum_{1\leq i < k \leq \nu}\EE[X_i X_k\mid \mathfrak{W}]\right].
\end{aligned}
\end{equation}
For any multivariate normal random vector $\bb{Z}\sim \mathcal{N}_d(\bb{0}_d,V)$ with $A,V\in \mathcal{S}_{++}^d$, note that
\[
\EE[\bb{Z}^{\top} A \bb{Z}]
= \EE[\tr(\bb{Z}^{\top} A \bb{Z})] = \tr(\EE[A \bb{Z} \bb{Z}^{\top}]) = \tr(A \EE[\bb{Z} \bb{Z}^{\top}]) = \tr(A V),
\]
\[
\begin{aligned}
\EE[(\bb{Z}^{\top} A \bb{Z})^2]
&= \sum_{i,j,k,\ell=1}^d \hspace{-2mm} A_{ij} A_{k\ell} \EE[Z_i Z_j Z_k Z_{\ell}]
= \sum_{i,j,k,\ell=1}^d \hspace{-2mm} A_{ij} A_{k\ell} \{ V_{ij} V_{k\ell} + V_{ik} V_{j\ell} + V_{i\ell} V_{jk} \} = \tr(A V)^2 + 2 \, \tr(A V A V),
\end{aligned}
\]
and
\[
\Var(\bb{Z}^{\top} A \bb{Z})
= \EE[(\bb{Z}^{\top} A \bb{Z})^2] - \EE[\bb{Z}^{\top} A \bb{Z}]^2 = 2 \, \tr(A V A V).
\]
Hence, for all $i,k\in [\nu]$ such that $i\neq k$, it follows that
\[
\begin{aligned}
\Var(X_i\mid \{\mathfrak{W} = W\})
&= 2 \, \tr\{W^{-1} \Var(\mathfrak{Z}_{i\bigcdot}) W^{-1} \Var(\mathfrak{Z}_{i\bigcdot})\}
= 2 \, \tr\{W^{-1} I_d W^{-1} I_d\}
= 2 \, \tr(W^{-2}), \\
\EE[X_i\mid \{\mathfrak{W} = W\}]
&= \tr\{W^{-1} \Var(\mathfrak{Z}_{i\bigcdot})\} = \tr(W^{-1} I_d) = \tr(W^{-1}), \\
\EE[X_i X_k\mid \{\mathfrak{W} = W\}]
&= \EE[X_i \mid \{\mathfrak{W} = W\}] \EE[X_k\mid \{\mathfrak{W} = W\}] = \tr(W^{-1})^2.
\end{aligned}
\]
Substituting these expressions into \eqref{eq:fourth.power.Frobenius} above and applying Lemma~\ref{lem:trace.moments.inverse.Wishart} with $\alpha = n + d - 1$ and $n > 4$, we find
\[
\begin{aligned}
n^{-2} \, \EE\left[\|\mathfrak{Y}\|_F^4\right]
&= \EE\left[\nu \left\{2 \, \tr(\mathfrak{W}^{-2}) + \tr(\mathfrak{W}^{-1})^2\right\} + 2 \binom{\nu}{2} \tr(\mathfrak{W}^{-1})^2\right]
= 2\nu \, \EE[\tr(\mathfrak{W}^{-2})] + \nu^2 \, \EE[\tr(\mathfrak{W}^{-1})^2] \\
&= 2\nu \, \frac{\{(n-2) \, d + d^{\hspace{0.2mm}2}\}}{(n-1)(n-2)(n-4)} + \nu^2 \, \frac{\{(n-3) \, d^{\hspace{0.2mm}2} + 2d\}}{(n-1)(n-2)(n-4)},
\end{aligned}
\]
so that
\[
\EE\left[\|\mathfrak{Y}\|_F^4\right] = \frac{2\nu \{(1-2/n) \, d + d^{\hspace{0.2mm}2}/n\} + \nu^2 \{(1-3/n) \, d^{\hspace{0.2mm}2} + 2d/n\}}{(1-1/n)(1-2/n)(1-4/n)} \leq \frac{6 (\nu d)^2}{(1-4/n)},
\]
where the inequality is derived by elementary means by exploiting the fact that $n>4$ and $\nu,d\geq 1$; we omit the tedious details. This completes the proof of the lemma.
\end{proof}

The third lemma gives an exponential contraction for $\alpha$-H\"older continuous test functions for the matrix Ornstein--Uhlenbeck semigroup.

\begin{lemma}[$\alpha$-H\"older contraction]\label{lem:Walpha.contraction.OU}
Let $\alpha\in(0,1]$. Let $(\mathfrak{X}_t)_{t\geq 0}$ be the matrix Ornstein--Uhlenbeck process defined in \eqref{eq:OU.process}, with transition semigroup $(\mathcal{P}^{\mathrm{OU}}_t)_{t\geq 0}$ and stationary limiting distribution $\gamma = \mathcal{N}_{\nu\times d}(0_{\nu\times d},\Psi\otimes\Sigma)$. Recall the notation used in Proposition~\ref{prop:OU.process.distribution} and Theorem~\ref{thm:Stein.solutions}. Then, for all probability measures $\mu,\eta$ on $\R^{\nu\times d}$ with finite $\alpha$-moments and all $t\geq 0$,
\begin{equation}\label{eq:OU.Holder.contraction}
d_{\mathrm{HK},\alpha}\big(\mu \, \mathcal{P}^{\mathrm{OU}}_t, \eta \, \mathcal{P}^{\mathrm{OU}}_t\big)\leq e^{-\alpha t} d_{\mathrm{HK},\alpha}(\mu,\eta).
\end{equation}
In particular, since $\gamma \, \mathcal{P}^{\mathrm{OU}}_t = \gamma$, we have, for every $X\in \R^{\nu\times d}$,
\begin{equation}\label{eq:OU.Holder.delta}
d_{\mathrm{HK},\alpha}(\delta_X \mathcal{P}^{\mathrm{OU}}_t, \gamma)\leq e^{-\alpha t} d_{\mathrm{HK},\alpha}\big(\delta_X, \gamma\big).
\end{equation}
\end{lemma}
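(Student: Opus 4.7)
The plan is to combine the synchronous coupling representation of $\mathcal{P}^{\mathrm{OU}}_t$ afforded by Proposition~\ref{prop:OU.process.distribution} with the Kantorovich--Rubinstein duality that \emph{defines} $d_{\mathrm{HK},\alpha}$ in \eqref{dha}. The contraction factor $e^{-\alpha t}$ originates purely from the deterministic drift scaling: two copies of the matrix Ornstein--Uhlenbeck process driven by the same Brownian motion and started at $X$ and $Y$ differ at time $t$ by exactly $e^{-t}(X-Y)$, and this linear contraction is passed through the $\alpha$-H\"older regularity of the test function.

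The first step is a pointwise seminorm contraction on test functions: I would show that $[\mathcal{P}^{\mathrm{OU}}_t h]_{\alpha}\leq e^{-\alpha t}[h]_{\alpha}$ for every $h\in C^{0,\alpha}(\R^{\nu\times d})$. Using the explicit representation
\[
\mathcal{P}^{\mathrm{OU}}_t h(X)=\EE\big[h\big(e^{-t}X+\sqrt{1-e^{-2t}}\,\Psi^{1/2}\mathfrak{Z}\Sigma^{1/2}\big)\big],
\]
the same noise realization couples the evaluations at $X$ and $Y$, so Jensen's inequality followed by the H\"older bound on $h$ gives
\[
|\mathcal{P}^{\mathrm{OU}}_t h(X)-\mathcal{P}^{\mathrm{OU}}_t h(Y)|\leq [h]_{\alpha}\,\|e^{-t}(X-Y)\|_F^{\alpha}=e^{-\alpha t}[h]_{\alpha}\|X-Y\|_F^{\alpha}.
\]

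The second step is to upgrade this to the claimed contraction on the H\"older--Kantorovich distance. From the pushforward definition \eqref{eq:pushforward} and Fubini, one has the adjoint identity $\int h\,\rd(\mu\mathcal{P}^{\mathrm{OU}}_t)=\int \mathcal{P}^{\mathrm{OU}}_t h\,\rd\mu$; the integrability required here is harmless since H\"older functions grow at most polynomially while any matrix normal has all moments finite (and if $\mu,\nu$ fail to have a finite $\alpha$-moment then $d_{\mathrm{HK},\alpha}(\mu,\nu)=\infty$ and the bound is trivial). For a test function $h$ with $[h]_{\alpha}\leq 1$, the seminorm contraction of Step~1 shows that $g:=\mathcal{P}^{\mathrm{OU}}_t h$ satisfies $[g]_{\alpha}\leq e^{-\alpha t}$, so by the very definition \eqref{dha},
\[
\Big|\int h\,\rd(\mu\mathcal{P}^{\mathrm{OU}}_t)-\int h\,\rd(\nu\mathcal{P}^{\mathrm{OU}}_t)\Big|=\Big|\int g\,\rd\mu-\int g\,\rd\nu\Big|\leq e^{-\alpha t}d_{\mathrm{HK},\alpha}(\mu,\nu).
\]
Taking the supremum over admissible $h$ yields \eqref{eq:OU.Holder.contraction}. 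The bound \eqref{eq:OU.Holder.delta} then follows by specializing $\mu=\delta_X$ and $\nu=\gamma$ and invoking the invariance $\gamma\mathcal{P}^{\mathrm{OU}}_t=\gamma$ recorded in \eqref{eq:invariance}.

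No genuine obstacle arises: the coupling structure of the OU process gives the sharp contraction $e^{-\alpha t}$ almost for free. The only item requiring care is the integrability justification for the adjoint identity on H\"older test functions, which is dispatched by the polynomial-growth remark above, after which the argument is essentially a one-line duality calculation.
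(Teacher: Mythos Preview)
Your proposal is correct and follows essentially the same approach as the paper: both establish the seminorm contraction $[\mathcal{P}^{\mathrm{OU}}_t h]_{\alpha}\le e^{-\alpha t}[h]_{\alpha}$ via the synchronous coupling (you use the explicit Mehler-type representation of Proposition~\ref{prop:OU.process.distribution}, the paper writes out the coupled SDEs, but the content is identical since in either case the two trajectories differ by exactly $e^{-t}(X-Y)$), and then pass to measures by the duality defining $d_{\mathrm{HK},\alpha}$ together with the adjoint identity $\int h\,\rd(\mu\mathcal{P}^{\mathrm{OU}}_t)=\int \mathcal{P}^{\mathrm{OU}}_t h\,\rd\mu$.
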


\begin{proof}[\bf Proof of Lemma~\ref{lem:Walpha.contraction.OU}]
Fix two initial starting points $X,Y\in \R^{\nu\times d}$, and consider two corresponding solutions of the SDE \eqref{eq:OU.process} driven by the same Brownian matrix $(\mathfrak{B}_t)_{t\geq 0}$:
\[
\begin{aligned}
\rd \mathfrak{X}^X_t &= -\mathfrak{X}^X_t \rd t+\!\sqrt{2} \Psi^{1/2} \rd \mathfrak{B}_t \Sigma^{1/2}, \qquad \mathfrak{X}^X_0=X, \\
\rd \mathfrak{X}^Y_t &= -\mathfrak{X}^Y_t \rd t+\!\sqrt{2} \Psi^{1/2} \rd \mathfrak{B}_t \Sigma^{1/2}, \qquad \mathfrak{X}^Y_0=Y.
\end{aligned}
\]
The difference process $\Delta_t=\mathfrak{X}^X_t-\mathfrak{X}^Y_t$ solves $\rd \Delta_t=-\Delta_t \rd t$ with $\Delta_0=X-Y$, and hence
\begin{equation}\label{eq:OU.synchronous.diff.alpha}
\Delta_t=e^{-t}(X-Y) \qquad \text{for all } t\geq 0.
\end{equation}
Let $h\in C^{0,\alpha}(\R^{\nu\times d})$. Using \eqref{eq:OU.synchronous.diff.alpha}, we have
\[
|(\mathcal{P}^{\mathrm{OU}}_t h)(X)-(\mathcal{P}^{\mathrm{OU}}_t h)(Y)|
= \big|\EE[h(\mathfrak{X}^X_t)-h(\mathfrak{X}^Y_t)]\big|
\leq \EE\big[|h(\mathfrak{X}^X_t)-h(\mathfrak{X}^Y_t)|\big]
\leq [h]_{\alpha}\, \EE[\|\Delta_t\|_F^{\alpha}]
= [h]_{\alpha}\, e^{-\alpha t} \|X-Y\|_F^{\alpha},
\]
and thus
\[
[\mathcal{P}^{\mathrm{OU}}_t h]_{\alpha}\leq e^{-\alpha t}[h]_{\alpha}.
\]
Therefore, we obtain, for arbitrary probability measures $\mu,\eta$ on $\R^{\nu\times d}$ with finite $\alpha$-moments,
\[
\begin{aligned}
d_{\mathrm{HK},\alpha}(\mu \, \mathcal{P}^{\mathrm{OU}}_t, \eta \, \mathcal{P}^{\mathrm{OU}}_t)
&= \sup_{[h]_{\alpha}\leq 1} \Big|\int_{\R^{\nu\times d}} h(Y) \, (\mu \, \mathcal{P}^{\mathrm{OU}}_t)(\rd Y) - \int_{\R^{\nu\times d}} h(Y) \, (\eta \, \mathcal{P}^{\mathrm{OU}}_t)(\rd Y)\Big| \\
&= \sup_{[h]_{\alpha}\leq 1} \Big|\int_{\R^{\nu\times d}} \int_{\R^{\nu\times d}} h(Y) P_t(X,\rd Y) \, \mu(\rd X) - \int_{\R^{\nu\times d}} \int_{\R^{\nu\times d}} h(Y) P_t(X,\rd Y) \, \eta(\rd X)\Big| \\
&= \sup_{[h]_{\alpha}\leq 1} \Big|\int_{\R^{\nu\times d}} \mathcal{P}^{\mathrm{OU}}_t h(X) \, \mu(\rd X) - \int_{\R^{\nu\times d}} \mathcal{P}^{\mathrm{OU}}_t h(X) \, \eta(\rd X)\Big| \\
&\leq \sup_{[h]_{\alpha}\leq 1} [\mathcal{P}^{\mathrm{OU}}_t h]_{\alpha} \, d_{\mathrm{HK},\alpha}(\mu,\eta) \\
&\leq e^{-\alpha t} \, d_{\mathrm{HK},\alpha}(\mu,\eta).
\end{aligned}
\]
This is \eqref{eq:OU.Holder.contraction}. Taking $\eta=\gamma$ and using the invariance $\gamma \, \mathcal{P}^{\mathrm{OU}}_t=\gamma$ from \eqref{eq:invariance} yields \eqref{eq:OU.Holder.delta}.
\end{proof}

\end{appendices}

\section*{Funding}
\addcontentsline{toc}{section}{Funding}

R.\ E.\ Gaunt is supported by EPSRC grants EP/Y008650/1 and UKRI068. F.\ Ouimet is supported by the Natural Sciences and Engineering Research Council of Canada (NSERC) through Discovery Grant RGPIN-2026-04471 and Discovery Launch Supplement DGECR-2026-00449.

\section*{Acknowledgments}
\addcontentsline{toc}{section}{Acknowledgments}

We thank the referees for their careful reading of the manuscript and for their constructive comments, which helped us improve the presentation and clarity of the paper.

\section*{References}
\addcontentsline{toc}{section}{References}

\bibliographystyle{myjmva}
\bibliography{bib_JMVA}

\end{document}